\newtheorem{thm}{Theorem}[section]
\newtheorem{defn}[thm]{Definition}
\newtheorem{prop}[thm]{Proposition}
\newtheorem{cor}[thm]{Corollary}
\newtheorem*{lem}{Lemma}
\newtheorem{claim}{Claim}
\newcommand{\mc}[1]{\mathcal{#1}}
\newcommand{\ra}{\rightarrow}
\newcommand{\Ra}{\Rightarrow}
\newcommand{\minus}{\,\backslash\,}
\newcommand{\ps}{\;\;}
\DeclareMathOperator{\Ext}{Ext}
\tikzstyle{vertex} = [fill=black, circle, inner sep = 0pt, minimum size = 2mm]
\tikzstyle{edge} = [->, thick, >=latex]
\tikzstyle{arc} = [->, thick, >=latex, bend angle=45, bend right]
\begin{document}
\title{Various Theorems on Tournaments}
\author{Gaku Liu \\
\small Department of Mathematics \\
\small Princeton University \\
\small Senior Thesis \\
\small Adviser: Paul Seymour}
\date{May 7, 2012}

\maketitle

\pagestyle{empty}

\vspace*{\fill}
\section*{Acknowledgments}
None of this work would have been possible without my adviser Paul Seymour, and this thesis very much reflects his ideas, guidance, and contributions. I would also like to thank Maria Chudnovsky for putting in the time and work to be my second reader.
\vspace{2in}
\vspace*{\fill}

\newpage

\begin{abstract}
In this thesis we prove a variety of theorems on tournaments. A \emph{prime} tournament is a tournament $G$ such that there is no $X \subseteq V(G)$, $1 < |X| < |V(G)|$, such that for every vertex $v \in V(G) \minus X$, either $v \ra x$ for all $x \in X$ or $x \ra v$ for all $x \in X$. First, we prove that given a prime tournament $G$ which is not in one of three special families of tournaments, for any prime subtournament $H$ of $G$ with $5 \le |V(H)| < |V(G)|$ there exists a prime subtournament of $G$ with $|V(H)| + 1$ vertices that has a subtournament isomorphic to $H$. We next prove that for any two cyclic triangles $C$, $C^\prime$ in a prime tournament $G$, there is a sequence of cyclic triangles $C_1,\ldots,C_n$ such that $C_1 = C$, $C_n = C^\prime$, and $C_i$ shares an edge with $C_{i+1}$ for all $1 \le i \le n-1$. Next, we consider what we call \emph{matching tournaments}, tournaments whose vertices can be ordered in a horizontal line so that every vertex is the head or tail of at most one edge that points right-to-left. We determine the conditions under which a tournament can have two different orderings satisfying the above conditions. We also prove that there are infinitely many minimal tournaments that are not matching tournaments. Finally, we consider the tournaments $K_n$ and $K_n^\ast$, which are obtained from the transitive tournament with $n$ vertices by reversing the edge from the second vertex to the last vertex and from the first vertex to the second-to-last vertex, respectively. We prove a structure theorem describing tournaments which exclude $K_n$ and $K_n^\ast$ as subtournaments.
\end{abstract}

\newpage

\vspace*{\fill}
\tableofcontents
\vspace{2in}
\vspace*{\fill}

\newpage

\pagestyle{headings}
\setcounter{page}{1}

\section{Introduction}

A \emph{tournament} $G$ is a non-null, loopless directed graph such that for any two distinct vertices $u, v \in V(G)$, there is exactly one edge with both ends in $\{u,v\}$. 
A \emph{subtournament} of a tournament $G$ is a tournament induced on a nonempty subset of $V(G)$. 
If $H$ is a subtournament of $G$ and $X \subseteq V(G)$, we use $H + X$ to denote the subtournament of $G$ induced on $V(H) \cup X$. 
If $X \subsetneq V(G)$, we use $G - X$ to denote the subtournament induced on $V(G) \minus X$. 
We use $H+v$ to mean $H+\{v\}$ and $G-v$ to mean $G - \{v\}$. 
For each vertex $v \in V(G)$, let $A_G(v) = \{u \in V(G) : v \ra u\}$ be the set of \emph{outneighbors} of $v$ in $G$, and let $B_G(v) = \{u \in V(G) : u \ra v\}$ be the set of \emph{inneighbors} of $v$ in $G$. We call $|A_G(v)|$ the \emph{outdegree} of $v$ in $G$ and $|B_G(v)|$ the \emph{indegree} of $v$ in $G$.
For any two disjoint sets $X, Y \subseteq V(G)$, we write $X \Ra Y$ if $x \ra y$ for every $x \in X$ and $y \in Y$. We use $v \Ra X$ to mean $\{v\} \Ra X$.
Given distinct vertices $u,v \in V(G)$, define $d_{uv} \in \{+,-\}$ so that $d_{uv} = +$ if $u \ra v$ and $d_{uv} = -$ if $v \ra u$.
Finally, an \emph{ordering} of $V(G)$ is a list $v_1,\ldots,v_n$ of the vertices of $G$.
A \emph{transitive} tournament is a tournament whose vertices can be ordered $v_1,\ldots,v_n$ such that $v_i \ra v_j$ if $i < j$. We call the unique ordering $v_1,\ldots,v_n$ which satisfies the previous condition the \emph{standard ordering} of the vertices of a transitive tournament. We use $I_n$ to denote the isomorphism class of transitive tournaments with $n$ vertices. We will often refer to $I_n$ as a tournament itself; likewise, when we define other isomorphism classes of tournaments, we will often refer to them as tournaments themselves. 

This thesis contains various results on tournaments proven over the course of the year. While the different major results are largely independent of each other, there are a few common ideas. Section 2 introduces homogeneous sets, the ``substitution'' construction, and prime tournaments, concepts that are central to this paper and will be used throughout. Sections 3 through 6 each showcase a different theorem. Section 3 gives a theorem that allows us to ``grow'' a prime tournament one vertex at a time starting from any of its prime subtournaments. This result strengthens a theorem by Schmerl and Trotter \cite{ST} which is often used in the study of prime tournaments (see for example \cite{BB}, \cite{BDI}, \cite{L}). In Section 4 we prove an interesting result on the structure of cyclic triangles within prime tournaments. Sections 5 and 6 deal with tournaments which are in a sense ``almost-transitive''---they are formed from transitive tournaments by reversing a set of edges satisfying a given property. Section 5 deals with matching tournaments, tournaments for which this set of edges is a matching. Section 6 deals with tournaments with only a single edge reversed, and considers the structure of tournaments that exclude these. The final section considers directions for future research.

While this thesis is meant to be read chronologically, Sections 3 through 6 are more or less independent of each other with the following exceptions: the proofs of Theorems \ref{primeconnected} and \ref{noKn} use the concept of weaves defined in the proof of Theorem \ref{grow}, and Section 6 uses the definition of a backedge given at the beginning of Section 5.

\section{Homogeneous sets and prime tournaments}

\subsection{Basic definitions and properties}

Given a tournament $G$, a \emph{homogeneous set} of $G$ is a subset of vertices $X \subseteq V(G)$ such that for all vertices $v \in V(G) \minus X$, either $v \Ra X$ or $X \Ra v$. A homogeneous set $X \subseteq V(G)$ is \emph{nontrivial} if $1 < |X| < |V(G)|$; otherwise it is \emph{trivial}.

We list some basic properties of homogeneous sets.

\begin{prop}[Restriction] \label{restrict}
If $X$ is a homogeneous set of a tournament $G$, and $H$ is a subtournament of $G$, then $X \cap V(H)$ is a homogeneous set of $H$.
\end{prop}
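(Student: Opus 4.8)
The plan is to verify the defining condition for $X \cap V(H)$ directly from the condition for $X$, relying on the fact that a subtournament is taken to be \emph{induced}, so that edge directions between vertices of $H$ are inherited unchanged from $G$. Write $Y = X \cap V(H)$; I must show that for every $v \in V(H) \minus Y$, either $v \Ra Y$ or $Y \Ra v$ in $H$.

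First I would fix an arbitrary $v \in V(H) \minus Y$ and record the key reduction: since $v \in V(H)$ but $v \notin X \cap V(H)$, it follows that $v \notin X$, so $v \in V(G) \minus X$. This is exactly what lets me invoke the hypothesis that $X$ is homogeneous in $G$, which yields $v \Ra X$ or $X \Ra v$ in $G$. In the first case $v \ra x$ for every $x \in X$, hence in particular $v \ra y$ for every $y \in Y$ since $Y \subseteq X$; because $H$ is induced, these are the same edges in $H$, so $v \Ra Y$ holds in $H$. The second case is symmetric and gives $Y \Ra v$. As $v$ was arbitrary, $Y$ is homogeneous in $H$.

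There is no genuine obstacle here; the statement is essentially a bookkeeping exercise. The only points deserving care are the reduction $v \in V(H) \minus Y \Ra v \in V(G) \minus X$, which is what makes the global homogeneity condition applicable at the level of $H$, and the tacit observation that passing to the subset $Y \subseteq X$ cannot spoil a uniform ``all out'' or ``all in'' relation. Both are immediate once the induced-subtournament convention is made explicit.
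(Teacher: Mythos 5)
Your proof is correct and is the standard direct verification; the paper states this proposition without proof, and your argument is exactly the routine check one would supply (noting $v \in V(H) \minus (X \cap V(H))$ implies $v \in V(G) \minus X$, then restricting the resulting uniform orientation to the induced subtournament).
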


\begin{prop}[Extension] \label{extend}
If $H_1$, $H_2$ are subtournaments of a tournament $G$ and $X \subseteq V(H_1) \cap V(H_2)$ is a homogeneous set of both $H_1$ and $H_2$, then $X$ is a homogeneous set of the tournament induced on $V(H_1) \cup V(H_2)$.
\end{prop}

\begin{prop}[Cloning] \label{clone}
Let $G$ be a tournament, $x,y \in V(G)$ be distinct vertices, and $X \subseteq V(G) \minus \{y\}$. If $X$ is a homogeneous set of $G - y$ and $\{x,y\}$ is a homogeneous set of $G$, then $X$ is a homogeneous set of $G$.
\end{prop}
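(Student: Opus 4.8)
The goal is to verify the defining condition of a homogeneous set directly: for every $v \in V(G) \minus X$, show that either $v \Ra X$ or $X \Ra v$ in $G$. I would split on whether $v = y$. If $v \neq y$, then since $y \notin X$ as well, both $v$ and all of $X$ lie in $V(G-y)$, and the edge between $v$ and any $w \in X$ is identical in $G$ and in $G - y$ (deleting $y$ does not touch those pairs). Hence ``$v \Ra X$'' and ``$X \Ra v$'' mean the same thing in $G$ as in $G-y$, and since $X$ is homogeneous in $G-y$ one of them already holds there. So every vertex other than $y$ is immediate, and the entire content of the proposition sits in the single case $v = y$.

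For that case the plan is to route $y$'s relationship to $X$ through its twin $x$, and here I would use that $x \notin X$ (this is the regime in which the statement has content and is correct: with $x \in X$ one would instead expect $X \cup \{y\}$ to be homogeneous, not $X$ itself). Since $x \neq y$ and $x \notin X$, the vertex $x$ lies in $V(G-y) \minus X$, so homogeneity of $X$ in $G - y$ gives that either $x \Ra X$ or $X \Ra x$; as above, the same holds in $G$. Thus $x$ relates to all of $X$ in a single uniform direction.

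It remains to transfer this uniform behavior from $x$ to $y$, and this is exactly what $\{x,y\}$ being homogeneous in $G$ supplies. For each $w \in X$ we have $w \neq x$ (as $x \notin X$) and $w \neq y$ (as $y \notin X$), so $w \in V(G) \minus \{x,y\}$; homogeneity of $\{x,y\}$ then forces $w \Ra \{x,y\}$ or $\{x,y\} \Ra w$, and in either case $d_{yw} = d_{xw}$. Consequently $y$ and $x$ point the same way at every $w \in X$, so from $x \Ra X$ I conclude $y \Ra X$, and from $X \Ra x$ I conclude $X \Ra y$. This settles the case $v = y$ and completes the verification.

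I do not expect a serious obstacle here; the proof is essentially bookkeeping once the case split is made. The one point that needs care is recognizing that the work is concentrated entirely at $v = y$ and that the hypothesis $x \notin X$ is what makes $x$'s relationship to $X$ well defined and uniform so that the twin property of $\{x,y\}$ can carry it over to $y$.
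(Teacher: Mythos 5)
The paper states this proposition as one of six basic properties of homogeneous sets and supplies no proof, so there is no argument of the author's to compare against; your proof is the natural one and is correct under the extra hypothesis $x \notin X$ that you impose. The case split is sound: for $v \neq y$ the edges between $v$ and $X$ are untouched by deleting $y$, so homogeneity in $G-y$ transfers verbatim; for $v = y$, homogeneity of $X$ in $G-y$ gives a uniform direction between $x$ and $X$ (this is where $x \notin X$ is used, so that $x$ is genuinely outside $X$ in $G-y$), and homogeneity of $\{x,y\}$ in $G$ copies that direction over to $y$ since every $w \in X$ avoids both $x$ and $y$.

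You are also right to flag that the hypothesis $x \notin X$ is genuinely needed and is absent from the statement as written. A concrete counterexample with $x \in X$: take $G = I_4$ with standard ordering $x, y, w, z$ and $X = \{x,w\}$. Then $X$ is a homogeneous set of $G - y$ (as $X \Ra z$) and $\{x,y\}$ is a homogeneous set of $G$ (as $\{x,y\} \Ra \{w,z\}$), yet $X$ is not a homogeneous set of $G$, because $x \ra y$ while $y \ra w$. As you observe, in that regime the correct conclusion is that $X \cup \{y\}$ is homogeneous in $G$, by the same transfer argument. So your proposal proves the proposition in the form in which it is evidently meant to be applied, and correctly identifies the repair its statement requires.
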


\begin{prop}[Intersection] \label{intersect}
If $X$, $Y$ are homogeneous sets of a tournament $G$, then $X \cap Y$ is a homogeneous set of $G$.
\end{prop}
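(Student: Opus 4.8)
The plan is to work directly from the definition, reducing the homogeneity of $X \cap Y$ to that of whichever of $X$ or $Y$ is relevant for a given vertex. Set $Z = X \cap Y$ and fix an arbitrary $v \in V(G) \minus Z$. The goal is to show that either $v \Ra Z$ or $Z \Ra v$.

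The key observation is purely set-theoretic: since $v \notin Z = X \cap Y$, we must have $v \notin X$ or $v \notin Y$. I would split into these two (not necessarily exclusive) cases. Suppose first that $v \notin X$. Because $X$ is a homogeneous set of $G$ and $v$ lies outside $X$, the definition gives either $v \Ra X$ or $X \Ra v$. As $Z = X \cap Y \subseteq X$, the orientation of all edges between $v$ and $Z$ is inherited from the edges between $v$ and $X$: if $v \ra x$ for every $x \in X$ then in particular $v \ra x$ for every $x \in Z$, so $v \Ra Z$; and symmetrically $X \Ra v$ yields $Z \Ra v$. The case $v \notin Y$ is handled identically, using that $Y$ is homogeneous and $Z \subseteq Y$.

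Since one of these two cases always applies, every vertex $v \in V(G) \minus Z$ satisfies $v \Ra Z$ or $Z \Ra v$, which is exactly the statement that $Z = X \cap Y$ is a homogeneous set of $G$. I expect no genuine obstacle here; the only point requiring care is the logical structure of the case split, namely recognizing that $v \notin X \cap Y$ is equivalent to $v \notin X$ or $v \notin Y$ (rather than ``and''), so that at least one of the two homogeneity hypotheses is always available. No appeal to the earlier Restriction, Extension, or Cloning propositions is needed, though one could alternatively phrase the argument as an application of Restriction to the subtournament induced on $Z \cup \{v\}$.
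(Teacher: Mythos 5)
Your proof is correct: the paper states this proposition without proof as one of the basic properties, and your direct argument from the definition (noting that $v \notin X \cap Y$ forces $v \notin X$ or $v \notin Y$, then inheriting the orientation from the relevant homogeneous set) is exactly the intended elementary verification. No gaps.
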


\begin{prop}[Subtraction] \label{subtract}
If $X$, $Y$ are homogeneous sets of a tournament $G$ and $Y \minus X \neq \varnothing$, then $X \minus Y$ is a homogeneous set of $G$.
\end{prop}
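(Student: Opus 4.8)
The plan is to verify directly from the definition that every vertex outside $X \minus Y$ either dominates or is dominated by all of $X \minus Y$. Write $W = X \minus Y$; if $W = \varnothing$ the conclusion holds vacuously, so assume $W \neq \varnothing$. The hypothesis $Y \minus X \neq \varnothing$ lets me fix a vertex $w \in Y \minus X$, and the central observation will be that this single vertex forces $W$ to behave uniformly with respect to $Y$.

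First I would establish this uniformity claim: either $Y \Ra u$ for every $u \in W$, or $u \Ra Y$ for every $u \in W$. To see this, note each $u \in W$ lies in $X$ but not in $Y$, so homogeneity of $Y$ gives $u \Ra Y$ or $Y \Ra u$; since $w \in Y$, this says the edge between $u$ and $w$ is $u \ra w$ in the first case and $w \ra u$ in the second. But $w \notin X$ while $u \in X$, so homogeneity of $X$ forces the edge between $w$ and every element of $X$—in particular every $u \in W$—to point the same way. Hence all $u \in W$ share the same relationship to $Y$, which is exactly the claim.

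With this uniformity in hand, I would take an arbitrary $v \in V(G) \minus W$ and split into two cases. If $v \notin X$, then homogeneity of $X$ gives $v \Ra X$ or $X \Ra v$, and since $W \subseteq X$ this immediately yields $v \Ra W$ or $W \Ra v$. If instead $v \in X$, then $v \in X \cap Y$ because $v \notin W = X \minus Y$; now using $v \in Y$ together with the uniformity claim, either $Y \Ra u$ for all $u \in W$ (giving $v \Ra W$) or $u \Ra Y$ for all $u \in W$ (giving $W \Ra v$). In both cases $v \Ra W$ or $W \Ra v$ holds, completing the verification.

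The only real content is the uniformity claim in the first step; everything else is a routine case split off the definition. The hypothesis $Y \minus X \neq \varnothing$ is essential precisely there—it supplies the anchor vertex $w$ whose edges to all of $X$ are controlled by homogeneity of $X$, and this is what rules out the possibility that different elements of $W$ sit on opposite sides of $Y$.
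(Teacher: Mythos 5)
Your proof is correct. The paper states Proposition \ref{subtract} without proof (it is one of the basic properties listed at the start of Section 2), and your argument---using the anchor vertex $w \in Y \minus X$ together with homogeneity of $X$ to force all of $X \minus Y$ onto the same side of $Y$, then splitting on whether $v \in X$---is exactly the standard verification one would supply.
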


\begin{prop}[Union] \label{union}
Suppose $G$ is a tournament and $X, Y \subseteq V(G)$ such that $X \cap Y \neq \varnothing$, $X$ is a homogeneous set of $G - (Y \minus X)$, and $Y$ is a homogeneous set of $G - (X \minus Y)$. Then $X \cup Y$ is a homogeneous set of $G$.
\end{prop}

A tournament is \emph{prime} if all of its homogeneous sets are trivial; otherwise, it is \emph{decomposable}.
Given a tournament $G$ and an ordering $v_1,\ldots,v_n$ of its vertices, and given tournaments $H_1,\ldots,H_n$, let $G(H_1,\ldots,H_n)$ be a tournament with vertex set $V_1 \cup V_2 \cup \cdots \cup V_n$, where the $V_i$ are pairwise disjoint sets of vertices, such that $V_i \Ra V_j$ if $v_i \ra v_j$, and for all $1 \le i \le n$ the subtournament of $G(H_1,\ldots,H_n)$ induced on $V_i$ is isomorphic to $H_i$. Every tournament with at least two vertices can be written as $G^\prime(H_1,\ldots,H_n)$ where $G^\prime$ is a prime tournament with at least two vertices. The prime tournaments are precisely those tournaments $G$ which cannot be written as $G^\prime(H_1,\ldots,H_n)$ for some $G^\prime$ with $2 \le |V(G^\prime)| < |V(G)|$.

A tournament $G$ is \emph{strongly connected} if for any two vertices $u,v \in V(G)$, there is a directed path from $u$ to $v$ and a directed path from $v$ to $u$. A \emph{strongly connected component}, or \emph{strong component}, of a tournament $G$ is a maximal strongly connected subtournament of $G$. The strong components of a tournament $G$ can be ordered as $S_1,\ldots, S_s$ so that $G$ can be written as $I_s(S_1,\ldots,S_s)$, where $I_s$ has the standard ordering for a transitive tournament. From this we see that the vertices of a strong component of a tournament form a homogeneous set, so if a tournament is prime, either it is strongly connected or all of its strong components have only one vertex. In the latter case the tournament is transitive, and $I_n$ has a homogeneous set for $n \ge 3$.
So every prime tournament with $\ge 3$ vertices is strongly connected.

The tournament with 1 vertex and the tournament with 2 vertices are both prime. The only prime tournament with 3 vertices is the cyclic triangle. It can be checked that all tournaments with 4 vertices are decomposable. For 5 vertices, there are exactly three prime tournaments $T_5$, $U_5$, and $W_5$, drawn below.

\begin{figure}[htbp]
\centering
\subfloat[$T_5$]{
\begin{tikzpicture}[scale=1.5]
\node[vertex] (v1) at (90:1) {};
\node[vertex] (v2) at (162:1) {};
\node[vertex] (v3) at (234:1) {};
\node[vertex] (v4) at (306:1) {};
\node[vertex] (v5) at (18:1) {};
\draw[edge] (v1) to (v2);
\draw[edge] (v1) to (v3);
\draw[edge] (v2) to (v3);
\draw[edge] (v2) to (v4);
\draw[edge] (v3) to (v4);
\draw[edge] (v3) to (v5);
\draw[edge] (v4) to (v5);
\draw[edge] (v4) to (v1);
\draw[edge] (v5) to (v1);
\draw[edge] (v5) to (v2);
\end{tikzpicture}
} \hspace{1.5cm}
\subfloat[$U_5$]{
\begin{tikzpicture}[scale=1.5]
\node[vertex] (v1) at (90:1) {};
\node[vertex] (v2) at (162:1) {};
\node[vertex] (v3) at (234:1) {};
\node[vertex] (v4) at (306:1) {};
\node[vertex] (v5) at (18:1) {};
\draw[edge] (v2) to (v1); 
\draw[edge] (v1) to (v3);
\draw[edge] (v2) to (v3);
\draw[edge] (v2) to (v4);
\draw[edge] (v3) to (v4);
\draw[edge] (v3) to (v5);
\draw[edge] (v4) to (v5);
\draw[edge] (v4) to (v1);
\draw[edge] (v5) to (v1);
\draw[edge] (v5) to (v2);
\end{tikzpicture}
} \hspace{1.5cm}
\subfloat[$W_5$]{
\begin{tikzpicture}[scale=1.5]
\node[vertex] (v1) at (90:1) {};
\node[vertex] (v2) at (162:1) {};
\node[vertex] (v3) at (234:1) {};
\node[vertex] (v4) at (306:1) {};
\node[vertex] (v5) at (18:1) {};
\draw[edge] (v2) to (v1); 
\draw[edge] (v1) to (v3);
\draw[edge] (v2) to (v3);
\draw[edge] (v2) to (v4);
\draw[edge] (v4) to (v3); 
\draw[edge] (v3) to (v5);
\draw[edge] (v4) to (v5);
\draw[edge] (v4) to (v1);
\draw[edge] (v5) to (v1);
\draw[edge] (v5) to (v2);
\end{tikzpicture}
}
\caption{The three five-vertex prime tournaments}
\end{figure}
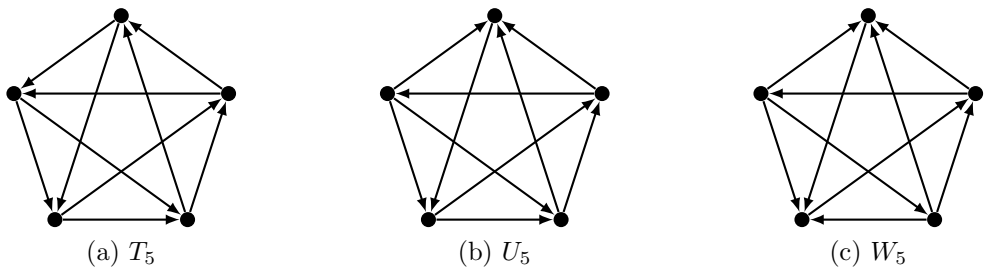

These three tournaments can be generalized to any odd number of vertices as follows.

\begin{defn} \label{TUW}
Let $k \ge 0$ and $n = 2k+1$. Define the tournaments $T_n$, $U_n$, and $W_n$ as follows.
\begin{itemize}
\item $T_n$ is the tournament with vertices $v_1, \ldots, v_n$ such that $v_i \ra v_j$ if $j \equiv i+1, i+2, \ldots,$ or $i+k \pmod{n}$. 
\item $U_n$ is the tournament obtained from $T_n$ by reversing all edges which have both ends in $\{v_1,\ldots,v_k\}$. 
\item $W_n$ is the tournmanet with vertices $w_1, \ldots, w_n$ such that $w_i \ra w_j$ if $1 \le i < j \le n-1$, and $\{w_2, w_4, \ldots, w_{n-1}\} \Ra w_n \Ra \{w_1, w_3, \ldots, w_{n-2}\}$.
\end{itemize}
\end{defn}

Note the following degenerate cases: $T_1$, $U_1$, and $W_1$ are all the single-vertex tournament, and $T_3$, $U_3$, and $W_3$ are all the cyclic triangle.

It can checked that $T_n$, $U_n$, and $W_n$ are prime for all odd $n$. Also, note that $T_n$, $U_n$, and $W_n$ have subtournaments isomorphic to $T_m$, $U_m$, and $W_m$, respectively, for $m \le n$. In fact, the only prime subtournaments of $T_n$, $U_n$, and $W_n$ with at least 3 vertices are $T_m$, $U_m$, and $W_m$, respectively, for $3 \le m \le n$.

\subsection{Prime subtournaments of prime tournaments}

Given a prime tournament, it is natural to ask about its prime subtournaments.
There have been several results proven about prime subtournaments; to state them, we introduce some extra terminology. Let $G$ be a tournament and $H$ be a prime subtournament of $G$ with $|V(H)| \ge 3$. Define $\Ext(H)$ to be the set of vertices $v \in V(G) \minus V(H)$ for which the tournament induced on $H+v$ is prime. Let $Z(H)$ be the set of vertices $v \in V(G) \minus V(H)$ for which $V(H)$ is a homogeneous set of $H+v$. Finally, for each $x \in V(H)$, let $V_x(H)$ be the set of vertices $v \in V(G) \minus V(H)$ for which $\{v,x\}$ is a homogeneous set of $H+v$.

We can use the basic properties of homogeneous tournaments found in Propositions \ref{restrict} through \ref{union} to prove the following, which appears in \cite{ER}.

\begin{prop}[Ehrenfeucht and Rozenberg \cite{ER}] \label{ERlemma}
Let $G$ be a tournament and $H$ be a prime subtournament of $G$ with $|V(H)| \ge 3$. Then
\renewcommand{\labelenumi}{(\roman{enumi})}
\begin{enumerate}
\item The collection of sets $\{\Ext(H), Z(H)\} \cup \{V_x(H) : x \in V(H)\}$ forms a partition of $V(G) \minus V(H)$.
\item If $u,v \in \Ext(H)$, $u \neq v$, such that $H + \{u,v\}$ is decomposable, then $\{u,v\}$ is a homogeneous set of $H + \{u,v\}$.
\item If $u \in Z(H)$ and $v \in (V(G) \minus V(H)) \minus Z(H)$ such that $H + \{u,v\}$ is decomposable, then $V(H) \cup \{v\}$ is a homogeneous set of $H + \{u,v\}$. 
\item If $u \in V_x(H)$ for some $x \in V(H)$ and $v \in (V(G) \minus V(H)) \minus V_x(H)$ such that $H + \{u,v\}$ is decomposable, then $\{u,x\}$ is a homogeneous set of $H + \{u,v\}$.
\end{enumerate}
\end{prop}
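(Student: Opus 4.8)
The plan is to prove the four parts in order, using the basic homogeneous-set properties as the main tools.

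For part (i), the goal is to show that the four types of sets are pairwise disjoint and cover $V(G) \minus V(H)$. First I would show coverage: take any $v \in V(G) \minus V(H)$ for which $H+v$ is \emph{decomposable}, and argue that $v$ must land in $Z(H)$ or some $V_x(H)$. Since $H+v$ is decomposable it has a nontrivial homogeneous set $Y$; by Restriction (Proposition \ref{restrict}), $Y \cap V(H)$ is a homogeneous set of the prime tournament $H$, so it is trivial. This forces either $V(H) \subseteq Y$ (giving $Y = V(H) \cup \{v\}$, hence $v \in Z(H)$) or $|Y \cap V(H)| \le 1$. In the latter case $v \in Y$ (else $Y \subseteq V(H)$ is nontrivial in $H$), and $Y = \{v, x\}$ for some $x \in V(H)$, giving $v \in V_x(H)$. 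Vertices with $H+v$ prime are exactly $\Ext(H)$. For disjointness I would check that these three kinds of homogeneous sets cannot coexist: if $v$ lay in both $Z(H)$ and some $V_x(H)$, then $H+v$ would have homogeneous sets $V(H)$ and $\{v,x\}$, and applying Intersection or Subtraction (Propositions \ref{intersect}, \ref{subtract}) would produce a nontrivial homogeneous set of $H$, a contradiction; similarly $V_x(H) \cap V_y(H) = \varnothing$ for $x \neq y$.

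Parts (ii)--(iv) share a common structure, so I would treat them uniformly. In each case $H + \{u,v\}$ is decomposable, hence has a nontrivial homogeneous set $Y$; by Restriction, $Y \cap V(H)$ is trivial in $H$. The strategy is to locate $Y$ precisely using the given membership of $u$. The crucial step is the following: by Restriction again, $Y \cap (V(H) \cup \{u\})$ is a homogeneous set of the subtournament $H+u$, and likewise $Y \cap (V(H) \cup \{v\})$ is homogeneous in $H+v$. Since we know the homogeneous-set structure of $H+u$ from which partition class $u$ belongs to (and $H+u$ is prime when $u \in \Ext(H)$), this pins down how $Y$ meets $V(H) \cup \{u\}$, and an analogous analysis constrains how $Y$ meets $v$.

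The main obstacle, and where I would spend the most care, is the case analysis combining these two restrictions to force $Y$ into exactly the claimed set. For part (ii), with $u,v \in \Ext(H)$: since $H+u$ is prime, $Y \cap (V(H) \cup \{u\})$ is trivial, so it is empty, a singleton, or all of $V(H) \cup \{u\}$; ruling out the bad cases (using that $Y \cap V(H)$ is trivial and that $Y$ is nontrivial, together with the symmetric statement for $v$) should leave $Y = \{u,v\}$. For parts (iii) and (iv) the argument is similar but I must use the defining homogeneous set ($V(H)$ for $u \in Z(H)$, or $\{u,x\}$ for $u \in V_x(H)$) together with Cloning or Union (Propositions \ref{clone}, \ref{union}) to combine it with the constraint coming from $v$; the delicate point is handling the subcases where $Y$ could a priori contain part of $V(H)$ but not all of it, and showing these collapse to the stated homogeneous set by invoking primeness of $H$ via Restriction. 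I expect part (iv) to be the most intricate, since it involves a distinguished vertex $x$ and requires verifying that the resulting set is exactly $\{u,x\}$ rather than some larger set containing $x$.
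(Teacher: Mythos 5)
Your argument for (i) is the same as the paper's: coverage via Restriction and the primeness of $H$, disjointness via Subtraction (for $Z(H)$ versus $V_x(H)$) and Union (for $V_x(H)$ versus $V_y(H)$). One small correction there: Intersection does not give the contradiction in the first disjointness case, since $V(H) \cap \{v,x\} = \{x\}$ is a trivial homogeneous set; Subtraction is the tool that works. The paper omits the proofs of (ii)--(iv) entirely, so your outline goes beyond what is written, and it is workable, but with one caveat: your stated goal of ``forcing $Y$ into exactly the claimed set'' is not always achievable, because the witness $Y$ need not coincide with the homogeneous set asserted in the conclusion. For example, in (iv) the nontrivial homogeneous set $Y$ of $H+\{u,v\}$ could be $V(H)\cup\{u\}$, or $\{y,v\}$ for some $y \in V(H)$ with $y \neq x$; in such subcases the right move is to extract from the homogeneity of $Y$ (together with $\{u,x\}$ being homogeneous in $H+u$) the single relation $d_{vu}=d_{vx}$, which is all that is needed to conclude that $\{u,x\}$ is homogeneous in $H+\{u,v\}$. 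Likewise in (iii), the subcase $Y=\{x',v\}$ with $v \in V_{x'}(H)$ is not a contradiction with the hypothesis $v \notin Z(H)$; instead one reads off $d_{uv}=d_{ux'}$ and, since $u$ relates uniformly to $V(H)$, concludes directly that $V(H)\cup\{v\}$ is homogeneous. The remaining subcases ($Y=\{u,v\}$, $Y \supseteq V(H)$, $Y$ containing $u$ and one vertex of $H$) do close out by contradiction via Restriction, Subtraction, and Cloning as you indicate. With the goal restated as ``show the claimed set is homogeneous'' rather than ``show $Y$ equals it,'' your plan is sound.
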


\begin{proof}
We will only prove part (i); the proofs of the other parts involve similar techniques. We need to show that if $v \in V(G) \minus V(H)$ is a vertex such that $H + v$ is decomposable, then $v$ is in exactly one of the sets $Z(H)$, $V_x(H)$ for $x \in V(H)$. First, we show that $v$ is in at least one of these sets. Since $H+v$ is decomposable, it has a nontrivial homogeneous set $X \subseteq V(H+v)$. By restriction to $H$, we have that $X \cap V(H)$ is a homogeneous set of $H$. Since $H$ is prime, we must thus have either $|X \cap V(H)| \le 1$ or $X \cap V(H) = V(H)$. If the former case, then since $X$ is a nontrivial homogeneous set of $H+v$, we must have $|X \cap V(H)| = 1$ and $v \in X$, so $v \in V_{X \cap V(H)}(H)$, as desired. If the latter case, then $v \in Z(H)$, as desired. So $v$ is in at least one of $Z(H)$, $V_x(H)$ for $x \in V(H)$.

Suppose $v$ is in at least two of these sets. First, suppose $v \in Z(H)$ and $v \in V_x(H)$ where $x \in V(H)$. We have that $V(H)$ and $\{v,x\}$ are homogeneous sets of $H+v$. Applying Proposition \ref{subtract}, we have that $V(H) \minus \{x\}$ is a homogeneous set of $H+v$, and restricting to $H$ we have that $V(H) \minus \{x\}$ is a homogeneous set of $H$. Since $|V(H)| \ge 3$, this is a nontrivial homogeneous set of $H$, a contradiction.

Now suppose $v \in V_x(H)$ and $v \in V_y(H)$ for two distinct $x,y \in V(H)$. Then $\{v,x\}$ and $\{v,y\}$ are homogeneous sets of $H+v$. Applying Proposition \ref{union}, we have that $\{v,x,y\}$ is a homogeneous set of $H+v$, and restricting to $H$, we have that $\{x,y\}$ is a homogeneous set of $H$. Since $|V(H)| \ge 3$, this is a nontrivial homogeneous set of $H$, a contradiction. This completes the proof.
\end{proof}

This result leads to the following corollaries in the case where $G$ is prime.

\begin{cor}[Ehrenfeucht and Rozenberg \cite{ER}] \label{grow2}
Let $G$ be a prime tournament and $H$ be a prime subtournament of $G$ with $3 \le |V(H)| \le |V(G)| - 2$. Then there exist distinct vertices $u,v \in V(G) \minus V(H)$ such that $H + \{u,v\}$ is prime.
\end{cor}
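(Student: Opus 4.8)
\section*{Proof proposal}

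The plan is to argue by contradiction: I will assume that $H + \{u,v\}$ is decomposable for every pair of distinct vertices $u,v \in V(G) \minus V(H)$ and from this produce a nontrivial homogeneous set of $G$, contradicting primeness. Throughout I write $W = V(G) \minus V(H)$ and use the partition $W = \Ext(H) \sqcup Z(H) \sqcup \bigsqcup_{x \in V(H)} V_x(H)$ from Proposition \ref{ERlemma}(i); note $|W| \ge 2$ since $|V(H)| \le |V(G)| - 2$. Parts (ii)--(iv) of Proposition \ref{ERlemma} apply to every pair because, by assumption, every $H + \{u,v\}$ is decomposable. The heart of the argument is to kill the three types of blocks in the partition one at a time, each time exhibiting a homogeneous set that primeness forces to be trivial.

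First I would show that for each $x \in V(H)$ the set $P_x = \{x\} \cup V_x(H)$ is homogeneous in $G$. The point is that any vertex $w$ outside $P_x$ relates to every $u \in V_x(H)$ exactly as it relates to $x$: if $w \in V(H) \minus \{x\}$ this is immediate from the definition of $V_x(H)$ (that $\{u,x\}$ is homogeneous in $H+u$), and if $w \in W \minus V_x(H)$ it is precisely the content of part (iv) applied to the pair $\{u,w\}$. Since $|V(H)| \ge 3$, $P_x$ is a proper subset, so primeness forces $|P_x| \le 1$, i.e.\ $V_x(H) = \varnothing$ for every $x$. With all $V_x(H)$ empty we have $W = \Ext(H) \sqcup Z(H)$. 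Next I would show $V(H) \cup \Ext(H)$ is homogeneous: the only outside vertices are those of $Z(H)$, and part (iii) says each $u \in Z(H)$ relates to every $v \in \Ext(H)$ with the same sign with which it (uniformly) relates to $V(H)$, so $u$ sees all of $V(H) \cup \Ext(H)$ uniformly. As this set has at least $3$ vertices, primeness forces it to equal $V(G)$, i.e.\ $Z(H) = \varnothing$.

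Finally we are left with $W = \Ext(H)$. By part (ii) any two vertices of $\Ext(H)$ induce a homogeneous pair in $H + \{u,v\}$ and hence attach to $V(H)$ with the identical pattern of signs; consequently every $x \in V(H)$ relates uniformly to all of $\Ext(H)$, so $\Ext(H)$ itself is homogeneous. But $\Ext(H) = W$ has at least two vertices and is proper, a nontrivial homogeneous set---the desired contradiction.

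I expect the main obstacle to be the bookkeeping in the first step: correctly checking that an arbitrary outside vertex (which may live in $V(H)$, in $\Ext(H)$, in $Z(H)$, or in a different $V_y(H)$) sees all of $V_x(H)$ the same way it sees $x$, and recognizing that this is exactly where part (iv) is needed. Once $P_x$ is handled, the collapses $Z(H) = \varnothing$ and then $\Ext(H) = W$ follow the same template with parts (iii) and (ii) respectively. As a sanity check, when $|W| = 2$ the statement is immediate since $H + \{u,v\} = G$ is prime, and the contradiction argument degenerates to exactly this case.
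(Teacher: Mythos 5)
Your proposal is correct and follows essentially the same route as the paper: assume every pair extension is decomposable, invoke Proposition \ref{ERlemma}, and show in turn that each nonempty block of the partition ($V_x(H)$, $Z(H)$, $\Ext(H)$) would yield a nontrivial homogeneous set of $G$. The only differences are cosmetic --- you treat the $V_x(H)$ blocks before $Z(H)$ and verify homogeneity of each candidate set directly vertex-by-vertex, where the paper assembles the same sets by repeated application of Propositions \ref{extend} and \ref{union}.
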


\begin{proof}
Suppose that for every distinct $u,v \in V(G) \minus V(H)$, $H+\{u,v\}$ is decomposable. We first prove that $Z(H)$ and $V_x(H)$ are empty for all $x \in V(H)$. By Proposition \ref{ERlemma}(iii), we have that $V(H) \cup \{v\}$ is a homogeneous set of $H + \{u,v\}$ for every $u \in Z(H)$ and $v \in (V(G) \minus V(H)) \minus Z(H)$. Repeatedly applying Propositions \ref{extend} and \ref{union} on these homogeneous sets, we thus have that $V(G) \minus Z(H)$ is a homogeneous set of $G$. If $Z(H)$ is nonempty, then $V(G) \minus Z(H)$ is a nontrivial homogeneous set of $G$, contradicting the fact that $G$ is prime. So $Z(H)$ is empty.

Similarly, for each $x \in V(H)$, we have by Proposition \ref{ERlemma}(iv) that $\{u,x\}$ is a homogeneous set of $H + \{u,v\}$ for every $u \in V_x(H)$, $v \in (V(G) \minus V(H)) \minus V_x(H)$. Repeatedly applying Propositions \ref{extend} and \ref{union} on these homogeneous sets, we have that $V_x(H) \cup \{x\}$ is a homogeneous set of $G$. If $V_x(H)$ is nonempty, then $V_x(H) \cup \{x\}$ is a nontrivial homogeneous set of $G$, a contradiction. So $V_x(H)$ is empty for each $x \in V(H)$.

It follows by Proposition \ref{ERlemma}(i) that $V(G) \minus V(H) = \Ext(H)$. Thus, by Proposition \ref{ERlemma}(ii), $\{u,v\}$ is a homogeneous set of $H + \{u,v\}$ for every distinct $u,v \in V(G) \minus V(H)$. Since $|V(G) \minus V(H)| \ge 2$, we can apply Proposition \ref{union} repeatedly on these homogeneous sets to get that $V(G) \minus V(H)$ is a homogeneous set of $G$. This is a nontrivial homogeneous set of $G$, a contradiction.
\end{proof}

\begin{cor} \label{contains5}
Every prime tournament $G$ with $|V(G)| \ge 5$ has a prime subtournament with 5 vertices.
\end{cor}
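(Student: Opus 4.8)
The plan is to exhibit a prime subtournament on $3$ vertices and then apply Corollary \ref{grow2} exactly once to enlarge it to a prime subtournament on $5$ vertices. The entire force of the statement is carried by Corollary \ref{grow2}, so the only work is to supply a suitable base case and to check the numeric hypothesis.

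First I would locate a cyclic triangle in $G$. Since $G$ is prime with $|V(G)| \ge 5 \ge 3$, the discussion preceding this corollary shows that $G$ is strongly connected. A strongly connected tournament on at least three vertices is not transitive (a transitive ordering $v_1 \ra \cdots \ra v_n$ would leave $v_1$ with no inneighbors, so it could not be reached from any other vertex), hence $G$ contains a directed cycle. Taking a shortest such cycle and using the standard fact that any chord of a directed cycle of length $\ge 4$ splits it into a strictly shorter directed cycle, we conclude that $G$ contains a directed cycle of length exactly $3$, i.e.\ a cyclic triangle. As noted in the text, the cyclic triangle is the unique prime tournament on $3$ vertices; let $H$ be the subtournament of $G$ induced on its three vertices, so that $H$ is a prime subtournament of $G$ with $|V(H)| = 3$.

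Next I would verify the hypotheses of Corollary \ref{grow2} for this $H$. We have $|V(H)| = 3 \ge 3$, and since $|V(G)| \ge 5$ we also have $|V(H)| = 3 \le |V(G)| - 2$. Thus Corollary \ref{grow2} applies and produces distinct vertices $u, v \in V(G) \minus V(H)$ such that $H + \{u,v\}$ is prime; since $|V(H + \{u,v\})| = 5$, this is the desired prime subtournament on $5$ vertices.

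I do not expect a genuine obstacle beyond the two standard ingredients above—the existence of a cyclic triangle inside a strongly connected tournament, and the arithmetic check $|V(H)| \le |V(G)| - 2$. The one subtlety worth flagging is that Corollary \ref{grow2} increases the vertex count by two at each application, which is precisely why a single application carries a $3$-vertex prime subtournament to a $5$-vertex one, and why the threshold in the statement is $|V(G)| \ge 5$ rather than $\ge 4$.
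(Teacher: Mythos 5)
Your proof is correct and takes essentially the same route as the paper: locate a cyclic triangle (the unique prime tournament on three vertices) and apply Corollary \ref{grow2} once, using $3 \le |V(G)| - 2$. The paper gets the cyclic triangle slightly more directly (a prime tournament on more than two vertices is not transitive, hence contains one), but this is an immaterial difference.
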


\begin{proof}
Since $G$ is prime and has $> 2$ vertices, it is not transitive, so it contains a cyclic triangle. Applying Corollary \ref{grow2} with $H$ as a cyclic triangle gives the corollary.
\end{proof}

Corollary \ref{grow2} can be thought of as a ``growing'' lemma: within a prime tournament, we can grow an increasing sequence of prime subtournaments so that each subtournament contains the previous one. For example, starting with a cyclic triangle in a prime tournament $G$ and repeatedly applying the corollary, we have that $G$ contains a prime subtournament with $n$ vertices for every odd $n < |V(G)|$. In particular, $G$ contains a prime subtournament with either $|V(G)| - 2$ or $|V(G)| - 1$ vertices. This last statement was improved upon by Schmerl and Trotter in \cite{ST}.

\begin{thm}[Schmerl and Trotter \cite{ST}] \label{critical}
If $G$ is a prime tournament with $|V(G)| \ge 6$, and $G$ is not $T_n$, $U_n$, or $W_n$ for any odd $n$, then $G$ has a prime subtournament with $|V(G)| - 1$ vertices.
\end{thm}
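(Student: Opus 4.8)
The plan is to prove the contrapositive in the form of a classification: call a prime tournament $G$ \emph{critical} if $G - v$ is decomposable for every $v \in V(G)$, so that $G$ has a prime subtournament on $|V(G)| - 1$ vertices precisely when it is \emph{not} critical. It therefore suffices to show that every critical prime tournament with at least $6$ vertices is $T_n$, $U_n$, or $W_n$ for some odd $n$. The first reduction is a parity argument using the growing lemma: starting from a cyclic triangle (which exists since a prime tournament on $\ge 3$ vertices is not transitive) and applying Corollary \ref{grow2} repeatedly, $G$ contains a prime subtournament of every odd order up to the largest odd integer not exceeding $|V(G)|$. When $|V(G)| = n$ is even this already yields a prime subtournament of order $n-1$, so $G$ is not critical; hence a critical $G$ must have $n$ odd, and we may assume $n \ge 7$. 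I would then proceed by strong induction on $n$, the base case $n = 7$ resting on the fact recorded in the excerpt that the only prime tournaments on $5$ vertices are $T_5, U_5, W_5$, each of which is critical because every $4$-vertex tournament is decomposable.

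For the inductive step, use the growing lemma once more to fix a prime subtournament $H$ with $|V(H)| = n - 2$, and write $V(G) \minus V(H) = \{a,b\}$. The key structural observation is that $\Ext(H) = \varnothing$: if some vertex $v$ lay in $\Ext(H)$ then $H + v$ would be a prime tournament on $n-1$ vertices, i.e.\ $G - w$ for the remaining vertex $w$, contradicting criticality. Consequently, by Proposition \ref{ERlemma}(i) each of $a$ and $b$ lies in $Z(H)$ or in some $V_x(H)$; that is, each extra vertex is either a global source or sink over $V(H)$, or a \emph{twin} of some vertex $x \in V(H)$, agreeing with $x$ on its adjacency to all of $V(H) \minus \{x\}$. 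Primality of $G$ then prunes the combinations sharply. For instance, $a,b \in Z(H)$ is impossible, since then $V(H)$ would be homogeneous in both $H+a$ and $H+b$, hence in $G$ by Extension (Proposition \ref{extend}), a nontrivial homogeneous set as $2 \le n-2 < n$. Analyzing the remaining cases with Propositions \ref{subtract} and \ref{union} together with the hypothesis that $G$ has no nontrivial homogeneous set should pin down the adjacency between $a$ and $b$ and the exact type (source/sink versus twin, and which vertex is twinned) of each.

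It remains to run the induction and match the forced attachments to the three families. Here I would first argue that $H$ may be taken to be itself critical, so that the induction hypothesis identifies $H$ as $T_{n-2}$, $U_{n-2}$, or $W_{n-2}$; then, for each of these three highly symmetric hosts, I would check that the only placements of the source/sink-or-twin vertices $a,b$ that leave $G$ prime and critical are exactly those producing $T_n$, $U_n$, and $W_n$ respectively, the rotational description of $T_n, U_n$ and the explicit description of $W_n$ in Definition \ref{TUW} making this verification concrete. I expect the main obstacle to be precisely this last stage: controlling the attachment of the two extra vertices---both guaranteeing a critical $(n-2)$-vertex subtournament to feed the induction, and carrying out the lengthy case analysis showing that the local twin/source-sink data is consistent with primality only in the three prescribed patterns. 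This is the technical heart of the Schmerl--Trotter argument, while everything preceding it is comparatively routine bookkeeping with homogeneous sets.
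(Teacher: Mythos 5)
Your skeleton is sound and is essentially the original Schmerl--Trotter strategy, which is a genuinely different route from the one this paper takes: here Theorem~\ref{critical} is obtained as an immediate corollary of Theorem~\ref{grow}, by starting from a $5$-vertex prime subtournament (Corollary~\ref{contains5}) and adding one vertex at a time. Your preliminary reductions all check out. The parity argument correctly forces a critical tournament to have odd order; $\Ext(H)=\varnothing$ follows from criticality exactly as you say; and your assertion that $H$ ``may be taken to be itself critical'' is in fact automatic, though you do not supply the argument: if $H$ were not critical it would contain a prime subtournament on $n-3$ vertices, and since $n-3\le |V(G)|-2$, Corollary~\ref{grow2} would grow that to a prime subtournament on $n-1$ vertices, contradicting criticality of $G$.

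The genuine gap is that essentially all of the content of the theorem lives in the stage you explicitly defer. You reduce to the situation where $H$ is $T_{n-2}$, $U_{n-2}$, or $W_{n-2}$ and each of $a,b$ lies in $Z(H)$ or in some $V_x(H)$, and then say you ``would check'' that primality and criticality force exactly the attachments producing $T_n$, $U_n$, $W_n$. Nothing in the write-up indicates how that case analysis goes: which vertices of each host admit twins, how the edge between $a$ and $b$ is determined, why a $Z(H)$-vertex paired with a twin cannot yield a prime tournament outside the three families, and why the outcome is forced rather than merely consistent. As written, the proposal reduces the theorem to its hardest part and stops, so it cannot be accepted as a proof; completing it along these lines amounts to reproving Schmerl and Trotter's classification of critically indecomposable tournaments. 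The paper's alternative buys this theorem for free from Theorem~\ref{grow}, whose proof absorbs the same combinatorial difficulty into the weave machinery of Claim~1.
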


\begin{thm}[Schmerl and Trotter \cite{ST}] \label{shrink2}
If $G$ is a prime tournament with $|V(G)| \ge 7$, then $G$ has a prime subtournament with $|V(G)| - 2$ vertices.
\end{thm}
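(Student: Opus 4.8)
The plan is to reduce this two-vertex deletion statement to the one-vertex statement of Theorem~\ref{critical}, peeling vertices off one at a time and treating the critical tournaments $T_n,U_n,W_n$ by hand. Write $n=|V(G)|$. First I would dispose of the case where $G$ is itself one of $T_n,U_n,W_n$; this forces $n$ odd, and since these tournaments contain $T_{n-2},U_{n-2},W_{n-2}$ respectively as subtournaments (with $n-2\ge 5$ odd, hence prime), we are immediately done. In fact, whenever $n$ is odd I would bypass Theorem~\ref{critical} altogether: starting from a cyclic triangle (which exists because a prime tournament on $\ge 3$ vertices is not transitive, and every non-transitive tournament contains a cyclic triangle) and repeatedly applying the growing lemma Corollary~\ref{grow2}, one obtains prime subtournaments of every odd order at most $n$, in particular one of order $n-2$.

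This leaves the case $n$ even (so $n\ge 8$), where $G$ cannot be special. Here I would apply Theorem~\ref{critical} to obtain a vertex $v$ with $G-v$ prime, of order $n-1\ge 7$. If $G-v$ is not one of $T_{n-1},U_{n-1},W_{n-1}$, then a second application of Theorem~\ref{critical} to $G-v$ produces a prime subtournament of order $n-2$, and we are done. The growing lemma cannot help close this case directly, because it increases the order in steps of two and so only ever reaches odd orders from an odd start.

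The remaining, and hardest, case is $n$ even with $G-v$ special, say $G-v\cong T_{n-1}$ (the cases $U_{n-1},W_{n-1}$ being analogous). The key observation is that any prime subtournament of $G$ of order $n-2$ must contain $v$: otherwise it would be a prime subtournament of $G-v$ of even order $n-2$, but the only prime subtournaments of $T_{n-1}$ on at least three vertices are the $T_m$ with $m$ odd. Thus it suffices to find two vertices $a,b\in V(G-v)$ such that $\bigl((G-v)-\{a,b\}\bigr)+v$ is prime. Setting $H'=(G-v)-\{a,b\}$, of order $n-3\ge 5$, Proposition~\ref{ERlemma}(i) tells us that $H'+v$ is prime exactly when $H'$ is prime and $v\in\Ext(H')$, i.e.\ when $v$ neither dominates nor is dominated by $V(H')$ and is not a twin (does not form a homogeneous pair with any vertex) of $H'$. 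So the goal becomes: delete a pair $\{a,b\}$ from the circulant $T_{n-1}$ so that the remainder stays prime while $v$'s attachment to it remains generic.

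The main obstacle is this last step. I would exploit the explicit rotational description of $T_{n-1}$ (each vertex dominating the next $k=(n-2)/2$ vertices), together with the constraints that primality of $G$ forces on the out-set $A=A_G(v)\cap V(G-v)$ and in-set $B=B_G(v)\cap V(G-v)$ of $v$ (namely that $A,B$ are nonempty and that $A$ does not coincide with the out-neighborhood of any single $v_i$, which would make $v$ a twin). The plan is to use these constraints to locate a pair $\{a,b\}$ whose deletion leaves a prime tournament of order $n-3$ in which $A$ and $B$ still both meet $V(H')$ and no vertex becomes a twin of $v$; the periodicity of $T_{n-1}$ should make such a pair available. The tournaments $U_{n-1}$ and $W_{n-1}$ would need separate but structurally identical treatment, differing only in the bookkeeping of which deletions preserve primality. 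I expect the crux to be guaranteeing primality of the deleted circulant and the genericity of $v$ \emph{simultaneously}, rather than either condition alone.
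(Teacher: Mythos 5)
Your reduction is sound up to the final case, and the first three branches (odd $n$ handled by iterating Corollary \ref{grow2} from a cyclic triangle; $G$ itself one of $T_n,U_n,W_n$; $n$ even with $G-v$ not special) are complete and correct. But the last case --- $n$ even and $G-v$ isomorphic to $T_{n-1}$, $U_{n-1}$, or $W_{n-1}$ --- is left as a plan rather than a proof, and it is precisely the hard part of the theorem. The case genuinely arises: since $T_m,U_m,W_m$ exist only for odd $m$, an even-order prime one-vertex extension of $T_{n-1}$ is never itself special, and by Proposition \ref{ERlemma} all but $|V(T_{n-1})|+2$ of the $2^{n-1}$ possible attachments of a new vertex to $T_{n-1}$ yield a prime tournament, so such $G$ are plentiful. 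Your observation that any prime $(n-2)$-vertex subtournament of such a $G$ must contain $v$ is correct and shows you really must produce a pair $\{a,b\}$ with $G-\{a,b\}$ prime. What you then describe --- using the circulant structure of $T_{n-1}$ to find a pair whose deletion keeps the remainder prime while keeping $v$'s attachment generic --- is exactly the content of Claim 2 in the paper's proof of Theorem \ref{grow} (with $n-1$ playing the role of that claim's $n$): a prime tournament on $n$ vertices containing $T_{n-1}$ has a prime subtournament on $n-2$ vertices. The paper's proof of that claim is not a routine periodicity check: it needs Proposition \ref{lemma2} plus a careful bookkeeping of the twin locations $x_{i,j}$ associated to the deleted pairs $\{w_i,w_{i+1}\}$, with a final parity contradiction. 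So the crux you ``expect'' to be the difficulty is indeed the difficulty, and your proposal does not resolve it; as written it establishes the theorem only for odd $|V(G)|$ and for even $|V(G)|$ when some prime $G-v$ avoids the special families.

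For comparison, the paper obtains Theorem \ref{shrink2} as an immediate corollary of Theorem \ref{grow}: when $G$ is not $T_n$, $U_n$, or $W_n$, Corollary \ref{contains5} supplies a $5$-vertex prime subtournament and Theorem \ref{grow} grows it one vertex at a time through every size up to $|V(G)|$, in particular $|V(G)|-2$; when $G$ is special, the subtournament $T_{n-2}$ (resp.\ $U_{n-2}$, $W_{n-2}$) already suffices. If you may invoke Theorem \ref{grow} as a black box, the proof is three lines; if not, your remaining case requires an argument of essentially the same depth as the paper's Claim 2.
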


Schmerl and Trotter's proof of these two theorems involve ``growing'' prime subtournaments, during which Corollary \ref{grow2} is essential. Our result in the next section can be thought of as a strengthening of Corollary \ref{grow2} in that it allows us to grow the sequence of prime subtournaments one vertex at a time instead of two vertices at a time in the case where $G$ is not $T_n$, $U_n$, or $W_n$. In particular, our theorem has both Theorems \ref{critical} and \ref{shrink2} as immediate corollaries.

\section{Growing prime tournaments}

\subsection{Statement of theorem}

The main theorem is as follows.

\begin{thm} \label{grow}
Let $G$ be a prime tournament which is not $T_n$, $U_n$, or $W_n$ for any odd $n$, and let $H$ be a prime subtournament of $G$ with $5 \le |V(H)| \le |V(G)| - 1$. Then there exists a prime subtournament of $G$ with $|V(H)|+1$ vertices that has a subtournament isomorphic to $H$.
\end{thm}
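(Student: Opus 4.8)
The plan is to look for a single vertex $v \in V(G) \minus V(H)$ with $H + v$ prime; then $H+v$ is the desired subtournament, since it has $|V(H)|+1$ vertices and contains $H$ itself. In the language of Proposition \ref{ERlemma} this just says $\Ext(H) \ne \varnothing$. I would first dispose of the case $|V(H)| = |V(G)| - 1$: here $G = H + v$ for the unique $v \notin V(H)$, and since $G$ is prime we get $v \in \Ext(H)$, so $G$ itself works. Thus I may assume $|V(H)| \le |V(G)| - 2$ and aim to show that either $\Ext(H) \ne \varnothing$ or $G$ is forced to be $T_n$, $U_n$, or $W_n$ (a contradiction with the hypothesis).

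So suppose $\Ext(H) = \varnothing$. By Proposition \ref{ERlemma}(i) every vertex of $V(G) \minus V(H)$ lies in $Z(H)$ or in some $V_x(H)$. Since $|V(H)| \le |V(G)| - 2$, Corollary \ref{grow2} supplies distinct $u,v$ with $P := H + \{u,v\}$ prime. A short argument rules out $u,v \in Z(H)$ simultaneously: if each of $u,v$ merely dominates, or is dominated by, all of $V(H)$, then $V(H)$ is a nontrivial homogeneous set of $P$, contradicting primality. Hence at least one of $u,v$ — say $u$ — is a clone, $u \in V_x(H)$ for some $x \in V(H)$, so $\{x,u\}$ is a homogeneous set of $H+u$.

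The key construction is a vertex swap. Because $u$ relates to $V(H) \minus \{x\}$ exactly as $x$ does, the subtournament $\tilde H := H - x + u$ is isomorphic to $H$ and is itself a prime subtournament of $G$, and $P = \tilde H + \{x,v\}$. I would then try to show $\tilde H + v$ is prime, in which case $\tilde H + v$ is the tournament we want: it has $|V(H)|+1$ vertices and contains $\tilde H \cong H$. Examining how $v$ relates to $x$ and $u$ inside the prime tournament $P$, one finds that $v$ must lie strictly between them (either $x \ra v \ra u$ or $u \ra v \ra x$), since $v \ra x \Leftrightarrow v \ra u$ would make $\{x,u\}$ a nontrivial homogeneous set of $P$. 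This ``clone plus distinguisher'' configuration is exactly the local pattern out of which $T_n$, $U_n$, and $W_n$ are built, so I would formalize a maximal such chain as a \emph{weave} attached to $H$, and then track how the vertices of $Z(H)$ and of the various $V_x(H)$ are forced to arrange themselves along it.

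The main obstacle, and the heart of the argument, is the case in which no single extension (either $H + v$ or a swapped $\tilde H + v$) is prime. Here I expect a finite but delicate case analysis on the weave: either some vertex along it can be adjoined to a suitably swapped copy of $H$ to produce a prime $(|V(H)|+1)$-vertex subtournament, or the weave is forced to close up on itself, at which point the primality of $G$ together with the homogeneous-set calculus of Propositions \ref{restrict} through \ref{union} pushes every vertex of $G$ into the weave, forcing $G \cong T_n$, $U_n$, or $W_n$. The two delicate points are verifying that each swapped tournament genuinely contains a copy of $H$ (guaranteed by the clone relation) and checking that the three ways a weave can close up leave no room for an extra vertex, so that the only obstructions are precisely the three excluded families.
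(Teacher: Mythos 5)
There is a genuine gap, and it sits exactly where you wave your hands. Your local analysis --- pass to a prime $P = H + \{u,v\}$ via Corollary \ref{grow2}, rule out $u,v \in Z(H)$ simultaneously, swap a clone $u \in V_x(H)$ for $x$, observe that $v$ must separate $x$ from $u$, and organize the resulting chain of clones into a weave --- is essentially the paper's proof of its Claim 1, which handles the case $|V(H)| \ge |V(G)|-2$. In that case the weave's homogeneity conditions are taken relative to all of $G$, so when the weave ``closes up'' the complement of the weave is a homogeneous set of $G$ and one really does force $G \cong T_m$, $U_m$, or $W_m$. But when $|V(H)| < |V(G)|-2$ your weave lives inside $P$, and closing up only forces $P \cong T_m$, $U_m$, or $W_m$ --- which contradicts nothing, since only $G$ is assumed to avoid these families. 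Your claim that primality of $G$ then ``pushes every vertex of $G$ into the weave'' fails: the homogeneity conditions defining the weave hold only within $P$, not within $G$, so there is no homogeneous set of $G$ to exploit.

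This is not a removable technicality. If $H \cong T_n$, it can genuinely happen that every prime two-vertex extension $H + \{u,v\}$ supplied by Corollary \ref{grow2} is isomorphic to $T_{n+2}$, and $T_{n+2}$ has \emph{no} prime subtournament on $n+1$ vertices at all (its only prime subtournaments on $\ge 3$ vertices are the odd-order $T_j$). So no amount of swapping inside $P$ can produce the desired $(n+1)$-vertex prime tournament; you must look elsewhere in $G$. The paper's resolution is a separate descent lemma (its Claim 2: a prime tournament on $m+3$ vertices containing $T_{m+2}$ has a prime subtournament on $m+1$ vertices containing $T_m$, proved via Proposition \ref{lemma2} and the $x_{i,j}$ bookkeeping) combined with an extremal choice of the largest bad $m$; growing to $T_{m+2}$, extending once more to a prime $(m+3)$-vertex tournament, and descending by two yields the contradiction. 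Your proposal contains no counterpart to this descent step, and without it the argument cannot close in precisely the exceptional case ($H$ isomorphic to $T_n$, $U_n$, or $W_n$ with $|V(H)| < |V(G)|-2$) that makes the theorem delicate.
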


Note that Theorem \ref{grow} is not a strict strengthening of Corollary \ref{grow2} because the theorem does not guarantee that the subtournament with $|V(H)|+1$ vertices contains the actual vertices of $H$; it only guarantees that it contains a subtournament isomorphic to $H$. However, in many applications where one would want to grow prime subtournaments, only the isomorphism class of the previous subtournament matters; for example, to use this theorem to prove Theorems \ref{critical} and \ref{shrink2}, only the number of vertices at each step matters.

Theorem \ref{grow} is based on and is a direct analogue of a theorem by Chudnovsky and Seymour for undirected graphs, found in \cite{CS}. Indeed, the proof for Thoerem \ref{grow} found here is closely related to the proof found in \cite{CS}. Chudnovsky and Seymour used their theorem to develop a polynomial-time algorithm to find simplicial cliques in prime claw-free graphs.

\subsection{Proof of theorem}

Or proof consists of two main claims.

\begin{claim}
The theorem holds when $|V(H)| \ge |V(G)| - 2$.
\end{claim}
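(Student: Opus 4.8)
The plan is to split on the value of $|V(H)|$. If $|V(H)| = |V(G)| - 1$, the claim is immediate: $G$ itself is a prime subtournament of $G$ with $|V(H)| + 1 = |V(G)|$ vertices, and it contains $H$ as a subtournament. So I assume $|V(H)| = |V(G)| - 2$ and write $V(G) \minus V(H) = \{u,v\}$. Since $u,v \notin V(H)$, both $G - u$ and $G - v$ contain $H$, and each has exactly $|V(H)| + 1$ vertices. As $G - v = H + u$ and $G - u = H + v$, if $u \in \Ext(H)$ or $v \in \Ext(H)$ then the corresponding tournament is prime and we are done. Hence I may assume $u,v \notin \Ext(H)$, i.e. both $H+u$ and $H+v$ are decomposable.

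By Proposition \ref{ERlemma}(i) each of $u,v$ lies in $Z(H)$ or in $V_x(H)$ for a unique $x \in V(H)$. First I would dispose of two configurations. If $u,v \in Z(H)$, then $V(H)$ is a homogeneous set of both $H+u$ and $H+v$, so by Proposition \ref{extend} it is a homogeneous set of $G$; since $1 < |V(H)| < |V(G)|$ this contradicts primality of $G$. If $u \in V_x(H)$ and $v \in V_x(H)$ for the same $x$, then every $z \in V(H) \minus \{x\}$ has the same orientation to $u$, to $v$, and to $x$ (each equal to its orientation to $x$), so $\{u,v,x\}$ is a nontrivial homogeneous set of $G$ (note $|V(G)| \ge 7$), again a contradiction. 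This leaves the cases where at most one of $u,v$ lies in $Z(H)$ and, if both lie in $V$-sets, they are distinct $V_x(H), V_y(H)$ with $x \ne y$.

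In each remaining case at least one of $u,v$, say $v$, lies in some $V_y(H)$; then $\{v,y\}$ is a homogeneous set of $H+v = G - u$, so the map fixing $V(H) \minus \{y\}$ and sending $v \mapsto y$ is an isomorphism from $G - \{u,y\}$ onto $H$. Thus $G - \{u,y\} \cong H$, and $G - y$ contains this copy of $H$; symmetrically, if $u \in V_x(H)$ then $G - x$ contains a copy of $H$. It therefore suffices to find, among $u,v$ and their partners $x,y$, a vertex whose deletion leaves a prime tournament. I would try to produce such a vertex directly; failing that, I would assume all of these deletions are decomposable and feed the resulting homogeneous sets, together with the orientations forced by $u,v$ being twins of $x,y$ and by $G$ being prime, into Propositions \ref{subtract} and \ref{union} to propagate structure across all of $G$.

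The hard part is this last step, and ruling out a global obstruction is genuinely necessary: for $G = T_7$ and $H = T_5$ the only prime subtournaments of $G$ are $T_3, T_5, T_7$, so there is no prime subtournament on $|V(H)| + 1 = 6$ vertices at all, and the claim fails without the hypothesis that $G$ is not $T_n$, $U_n$, or $W_n$. Accordingly, the crux is to show that the only way every candidate deletion can fail to be prime is for the forced orientations to assemble $G$ into one of $T_n$, $U_n$, or $W_n$, which the hypothesis excludes. I expect the bookkeeping of these orientations, and verifying that they match Definition \ref{TUW}, to be the main obstacle.
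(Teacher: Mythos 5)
Your reduction to the case $|V(H)| = |V(G)| - 2$, the elimination of the configurations $u,v \in Z(H)$ and $u,v \in V_x(H)$ for a common $x$, and the twin-swap observation that $v \in V_y(H)$ makes $G - \{u,y\}$ isomorphic to $H$ are all correct, and they match the opening moves of the paper's argument in spirit; the $T_7$/$T_5$ example correctly shows that a global obstruction must be classified rather than merely avoided. But the proposal stops exactly where the real work begins. The set of candidate deletions is not limited to $u,v$ and their partners $x,y$: once you pass to the new copy $G - \{u,y\}$ of $H$, the vertex $u$ (or $y$) may again lie in some $V_z$ of that copy, producing yet another isomorphic copy and another candidate, and this chain can have unbounded length. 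Saying you would "feed the resulting homogeneous sets into Propositions \ref{subtract} and \ref{union} to propagate structure" does not show that the chain terminates, nor that the accumulated orientation constraints assemble $G$ into $T_n$, $U_n$, or $W_n$; that classification is the entire content of the claim, and you explicitly defer it.

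The paper's device for organizing this iteration is the $u,v$-weave: an ordered sequence $\langle w_1,\ldots,w_n\rangle$ of vertices of $G$ containing $u,v$ consecutively, whose consecutive pairs are interchangeable in exactly the sense of your twin-swap (so every $G-\{w_i,w_{i+1}\}$ is isomorphic to $H$), subject to the extra conditions that the odd-indexed and even-indexed vertices form homogeneous sets in the appropriate subtournaments of $G$. Taking a maximal such weave $W$, the decomposability of $G - w_1$ either extends the weave (contradicting maximality), or forces $V(W)$ to be a nontrivial homogeneous set of $G$, or forces $w_2 \in Z(G-\{w_1,w_2\})$; the last alternative, applied at both ends of the weave, makes $V(G)\minus V(W)$ a homogeneous set, hence a single vertex, and Proposition \ref{weave} then identifies $G$ as $T_{n+1}$, $U_{n+1}$, or $W_{n+1}$, contradicting the hypothesis. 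Without this (or some equivalent) bookkeeping structure and the final identification step, what you have is a correct setup and a plan, not a proof.
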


\begin{claim}
If $n \ge 7$ is odd and $G$ is a prime tournament with $n+1$ vertices that has a subtournament isomorphic to $T_{n}$ ($U_{n}$, $W_{n}$, respectively), then $G$ has a prime subtournament with $n-1$ vertices that has a subtournament isomorphic to $T_{n-2}$ ($U_{n-2}$, $W_{n-2}$, respectively).
\end{claim}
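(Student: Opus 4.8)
The plan is to prove the contrapositive-style structural claim by exploiting the very explicit description of $T_n$, $U_n$, $W_n$ and their prime subtournaments. Recall from the remarks after Definition \ref{TUW} that the only prime subtournaments of $T_n$ (resp.\ $U_n$, $W_n$) with at least $3$ vertices are exactly $T_m$ (resp.\ $U_m$, $W_m$) for $3 \le m \le n$. So if $G$ has $n+1$ vertices and contains a copy of $T_n$, and if $G$ itself happened to be $T_{n+1}$---wait, $n+1$ is even, so $T_{n+1}$ is not even defined---then $G$ is \emph{not} one of the exceptional tournaments (those all have an odd number of vertices). Therefore Theorem \ref{critical} applies to $G$: since $|V(G)| = n+1 \ge 8 \ge 6$ and $G$ is prime but not $T_m$, $U_m$, or $W_m$, $G$ has a prime subtournament $G'$ with $n$ vertices. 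The first step, then, is to record this and to set up the fixed copy $H_0 \cong T_n$ (resp.\ $U_n$, $W_n$) sitting inside $G$ on all but one vertex.

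Next I would analyze how a copy of $T_n$ can sit inside a prime tournament $G$ on $n+1$ vertices. Let $z$ be the unique vertex of $G$ outside the fixed copy $H_0 \cong T_n$. By Proposition \ref{ERlemma}(i) applied with $H = H_0$, the vertex $z$ lies in exactly one of $\Ext(H_0)$, $Z(H_0)$, or $V_x(H_0)$ for some $x \in V(H_0)$. Since $G$ is prime and $z$ is the only outside vertex, $V(H_0)$ is \emph{not} a homogeneous set of $G$ (else it would be nontrivial), so $z \notin Z(H_0)$; similarly no pair $\{z,x\}$ can be homogeneous in $G = H_0 + z$, so $z \notin V_x(H_0)$ for any $x$. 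Hence $z \in \Ext(H_0)$, meaning $H_0 + z = G$ is prime---which we already knew. The real content is to determine, using the explicit edge structure of $T_n$ (the ``rotational'' definition $v_i \ra v_j$ iff $j \equiv i+1,\ldots,i+k \pmod n$), exactly which adjacency patterns of $z$ to $V(H_0)$ keep $G$ prime. I expect that the outside vertex $z$ must attach to the $T_n$ in one of a small number of ways, and that after identifying these I can locate a vertex $w \in V(H_0)$ whose deletion yields a prime tournament $G - w$ on $n$ vertices still containing $z$, together with a copy of $T_{n-2}$ inside.

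The key combinatorial step is then to find, within this $n$-vertex prime tournament $G - w = H_0 - w + z$, a prime subtournament on $n-2$ vertices containing a copy of $T_{n-2}$. Here I would use the self-similar structure of $T_n$: deleting an appropriately chosen vertex $v_i$ from $T_n$ leaves a tournament in which a copy of $T_{n-2}$ is visible on a suitable cyclically consecutive block, and the added vertex $z$ can be incorporated or a second vertex deleted so that the rotational pattern of $T_{n-2}$ survives. Concretely, I would argue that one can delete two vertices from $G$ (one of them possibly $z$, or two from $H_0$ chosen to be ``compatible'' with $z$'s attachment) so that the residual $(n-1)$-vertex tournament is prime and contains $T_{n-2}$; applying Theorem \ref{shrink2}-style reasoning or a direct check via the explicit structure then finishes the count. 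The cases $U_n$ and $W_n$ proceed identically, using that their only prime subtournaments are the $U_m$ and $W_m$, and the analogous explicit edge descriptions in Definition \ref{TUW}.

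The main obstacle, I expect, is the case analysis in the second step: classifying all ways the extra vertex $z$ can attach to a copy of $T_n$ (and $U_n$, $W_n$) so that the result is prime, and then verifying that in every such case one can delete a vertex to reach a prime $(n-1)$-vertex tournament whose copy of $T_{n-1}$-or-better still contains the required $T_{n-2}$. The rotational symmetry of $T_n$ should cut the number of essentially distinct attachment patterns down dramatically, but confirming that each surviving subtournament is genuinely prime---rather than merely lacking the obvious homogeneous sets---is where the explicit adjacency computations, guided by Propositions \ref{restrict}--\ref{union}, will have to be carried out carefully.
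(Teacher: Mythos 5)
There is a genuine gap here: what you have written is a plan rather than a proof, and the plan's central premise is flawed. You propose to classify ``exactly which adjacency patterns of $z$ to $V(H_0)$ keep $G$ prime'' and assert that $z$ ``must attach to the $T_n$ in one of a small number of ways.'' That is not true. By Proposition \ref{ERlemma}(i), $H_0+z$ is prime for \emph{every} attachment of $z$ except the $2n+2$ attachments placing $z$ in $Z(H_0)$ or some $V_x(H_0)$; so there are $2^n - 2n - 2$ prime attachment patterns, and the rotational symmetry of $T_n$ reduces this by a factor of at most $n$. An exponentially large case analysis cannot be carried out uniformly for all odd $n \ge 7$, and you supply no mechanism that handles an arbitrary attachment. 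The subsequent steps (``I would argue that one can delete two vertices\ldots'', ``applying Theorem \ref{shrink2}-style reasoning or a direct check\ldots finishes the count'') are exactly the content of the claim and are left entirely unproved. Your opening appeal to Theorem \ref{critical} is logically permissible (it is Schmerl--Trotter's independently established result) but does not advance the argument: it produces \emph{some} prime subtournament on $n$ vertices with no control over whether any further deletion yields a prime $(n-1)$-vertex tournament containing $T_{n-2}$. (Also note the slip: you at one point ask for a prime subtournament on $n-2$ vertices containing $T_{n-2}$, which is trivially just $T_{n-2}$ itself; the target is $n-1$ vertices.)

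For comparison, the paper sidesteps any classification of attachments by arguing by contradiction in a way that is uniform in the attachment pattern. Writing $H \cong T_n$ as a weave $W+v$ with $W = \langle w_1,\ldots,w_{n-1}\rangle$, each $H_{i,i+1} = H - \{w_i,w_{i+1}\}$ is a copy of $T_{n-2}$ on $n-1$ vertices (after adding $u$); assuming all $H_{i,i+1}+u$ are decomposable forces $u \in V_{x_{i,i+1}}(H_{i,i+1})$ for well-defined vertices $x_{i,i+1}$, and a general lemma about a vertex attached to two overlapping prime subtournaments (Proposition \ref{lemma2}) forces $x_{1,2}, x_{3,4}, x_{5,6}$ into specific deleted pairs. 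A parity argument using the special vertex $v$ of the weave then yields a contradiction. If you want to salvage your approach, you would need to replace the attachment classification with some such uniform structural argument; as it stands, the proof does not go through.
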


Assuming the truth of these two claims, we can prove Theorem \ref{grow} as follows: By Claim 1, we can assume $|V(H)| < |V(G)| - 2$. First suppose that $H$ is not $T_n$, $U_n$, or $W_n$ for any $n$. By Corollary \ref{grow2}, there are vertices $\{u,v\} \in V(G) \minus V(H)$ such that $H + \{u,v\}$ is prime. Since $H$ is not $T_n$, $U_n$, or $W_n$ for any $n$, $H + \{u,v\}$ is not any of these tournaments either (see the second paragraph after Definition \ref{TUW}). So applying Claim 1 to $H+\{u,v\}$ with subtournament $H$, we have a subtournament of $H+\{u,v\}$ with $|V(H)|+1$ vertices that has a subtournament isomorphic to $H$, as desired.

Now assume $H$ is $T_n$, $U_n$, or $W_n$ for some odd $n \ge 5$. We will assume $H$ is $T_n$; the arguments for $U_n$ and $W_n$ are identical. Suppose there is no prime subtournament of $G$ with $n+1$ vertices that has a subtournament isomorphic to $T_n$. Let $m$ be the largest odd integer such that
\begin{itemize} 
\item $G$ has a subtournament isomorphic to $T_m$, and 
\item $G$ has no prime subtournament with $m+1$ vertices that has a subtournament isomorphic to $T_m$. 
\end{itemize}
Thus, $m \ge n \ge 5$. By Claim 1, we have $m < |V(G)| - 2$. We claim that $G$ has a subtournament isomorphic to $T_{m+2}$. Applying Corollary \ref{grow2} on a copy of $T_m$ in $G$, we have that $G$ has a prime subtournament $H_{m+2}$ with $m+2$ vertices that has a subtournament isomorphic to $T_m$. If $H_{m+2}$ is not $T_{m+2}$, then we can apply Claim 1 to it to obtain a prime subtournament with $m+1$ vertices that has a subtournament isomorphic to $T_m$, contradicting the definition of $m$. Thus, $H_{m+2}$ is $T_{m+2}$.

Thus, $G$ has a subtournament isomorphic to $T_{m+2}$. Hence, by the maximality of $m$, there is a prime subtournament $H_{m+3}$ of $G$ with $m+3$ vertices that has a subtournament isomorphic to $T_{m+2}$. Applying Claim 2, $H_{m+3}$ has a prime subtournament with $m+1$ vertices that has a subtournament isomorphic to $T_m$. This contradicts the definition of $m$, completing the proof.

We now prove the two claims.

\begin{proof}[Proof of Claim 1]
The proof of this claim is made considerably simpler by establishing the right definitions, so we will spend a good amount of time doing so. These definitions will also be useful in later proofs.

We define a \emph{weave} $\langle w_1, \ldots, w_n \rangle$ to be a tournament with vertices $w_1, \ldots, w_n$ such that
\renewcommand{\labelenumi}{(\arabic{enumi})}
\renewcommand{\labelenumii}{(\arabic{enumi}\alph{enumii})}
\begin{enumerate}
\item $w_i \ra w_j$ if $i < j$ and $i,j$ have opposite parity
\item One of the following holds:
	\begin{enumerate}
	\item $w_i \ra w_j$ for all $i < j$ with $i,j$ odd
	\item $w_j \ra w_i$ for all $i < j$ with $i,j$ odd
	\end{enumerate}
\item One of the following holds:
	\begin{enumerate}
	\item $w_i \ra w_j$ for all $i < j$ with $i,j$ even
	\item $w_j \ra w_i$ for all $i < j$ with $i,j$ even
	\end{enumerate}
\end{enumerate}
Using ``F'' to mean ``forward'' and ``B'' to mean ``backward,'' we will call a weave an FF weave if (2a) and (3a) hold, an FB weave if (2a) and (3b) hold, a BF weave if (2b) and (3a) hold, and a BB weave if (2b) and (3b) hold. We refer to these as the four \emph{types} of weaves. (A weave can be of more than one type if $n \le 3$.)

The following facts can be easily checked; the information that is most relevant to our proof is summarized in the corollary afterwards.

\begin{prop} \label{weave}
Let $W = \langle w_1, \ldots, w_n \rangle$ be a weave.
\begin{itemize}
\item If $W$ is FF, then it is transitive.
\item If $n$ is even and $W$ is FB, then $\{w_2,\ldots,w_n\}$ is a homogeneous set of $W$.
\item If $n$ is even and $W$ is BF, then $\{w_1,\ldots,w_{n-1}\}$ is a homogeneous set of $W$.
\item If $n$ is even and $W$ is BB, then $\{w_1,w_n\}$ is a homogeneous set of $W$.
\item If $n$ is odd and $W$ is FB, then $\{w_1,\ldots,w_{n-1}\}$ is a homogeneous set of $W$.
\item If $n$ is odd and $W$ is BF, then $W$ is $U_n$.
\item If $n$ is odd and $W$ is BB, then $W$ is $T_n$. 
\end{itemize}
Let $v \notin V(W)$ be a vertex such that $v \ra w_i$ for odd $i$ and $w_i \ra v$ for even $i$.
\begin{itemize}
\item If $n$ is odd and $W$ is FF or FB, then $\{v,w_1,\ldots,w_{n-1}\}$ is a homogeneous set of $W+v$.
\item If $n$ is odd and $W$ is BF or BB, then $\{v,w_n\}$ is a homogeneous set of $W+v$.
\item If $n$ is even and $W$ is FF, then $W+v$ is $W_{n+1}$.
\item If $n$ is even and $W$ is FB or BF, then $W+v$ is $U_{n+1}$.
\item If $n$ is even and $W$ is BB, then $W+v$ is $T_{n+1}$.
\end{itemize}
\end{prop}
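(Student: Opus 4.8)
The plan is to split the bullets into three groups and dispatch them by distinct routines, reducing everything to a single genuine computation. All assertions of the form ``$X$ is a homogeneous set'' (including the first bullet, since an FF weave has every edge running forward and so is transitive) are pure parity bookkeeping: the direction of each edge of a weave is dictated by rules (1)--(3) together with the parities of the endpoint indices, so to check that $X$ is homogeneous I only verify, for each vertex outside $X$, that it points the same way to every element of $X$, which splits into at most two subcases by parity (and for $W+v$ the vertex $v$ contributes $v \ra w_i$ for odd $i$ and $w_i \ra v$ for even $i$). For instance, when $n$ is odd and $W$ is BB or BF, for the set $\{v, w_n\}$ one checks that every $w_j$ with $j$ odd has $v \ra w_j$ and $w_n \ra w_j$, so $\{v,w_n\} \Ra w_j$, while every $w_j$ with $j$ even has $w_j \ra v$ and $w_j \ra w_n$, so $w_j \Ra \{v,w_n\}$. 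Each such bullet is a one-line check of this kind.

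The substance lies in the two isomorphism bullets for odd $n$: a BB weave is $T_n$ and a BF weave is $U_n$. For the BB case I would first rewrite its adjacency uniformly: using (1), (2b), (3b) and the oddness of $n$, one sees $w_i \ra w_j$ holds exactly when $(j-i) \bmod n$ is odd (for $i<j$ this is opposite parity; for $i>j$ one has $(j-i)\bmod n = n-(i-j)$, odd precisely when $i,j$ share parity), so the BB weave is the rotational tournament whose out-set is the odd residues. Writing $n=2k+1$, so $2k \equiv -1 \pmod{n}$ and hence $k^{-1} \equiv -2 \pmod{n}$, the map $\phi(w_i)=v_{ki \bmod n}$ is an isomorphism onto $T_n$: indeed $k^{-1}\{1,\dots,k\} = \{-2,-4,\dots,-2k\} = \{1,3,\dots,n-2\} \pmod{n}$, so $\phi$ carries the odd-residue out-set exactly onto $\{1,\dots,k\}$, which is the out-set of $T_n$. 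For the BF case I would note that BF differs from BB only in that the even--even edges are reversed; under $\phi$ the even-indexed vertices $w_2,\dots,w_{2k}$ map to $v_{n-1},\dots,v_{k+1}$, i.e. to the consecutive block $\{v_{k+1},\dots,v_{2k}\}$, so the BF weave is $T_n$ with this block internally reversed. Since $v_i \mapsto v_{i+k}$ is an automorphism of $T_n$ carrying $\{v_1,\dots,v_k\}$ onto $\{v_{k+1},\dots,v_{2k}\}$, reversing either block gives isomorphic tournaments, and reversing $\{v_1,\dots,v_k\}$ yields $U_n$ by definition.

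For the five bullets describing $W+v$ with $n$ even, I would reduce to the odd case just proved. Appending $v$ as a new last vertex $w_{n+1}$ is legitimate because $n+1$ is odd, $v \ra w_i$ for odd $i$ reads as the backward odd--odd edge (2b), and $w_i \ra v$ for even $i$ reads as a forward opposite-parity edge (1); this turns a BB weave into an odd BB weave and a BF weave into an odd BF weave, giving $T_{n+1}$ and $U_{n+1}$. The FB case is symmetric: prepending $v$ as $w_0$ and relabeling swaps the two parity classes and again produces an odd BF weave, hence $U_{n+1}$. The FF case does not become a weave upon adding $v$, but no reduction is needed there, since an FF weave is the transitive tournament $w_1 \ra \cdots \ra w_n$ and the conditions $v \ra w_i$ (odd $i$), $w_i \ra v$ (even $i$) match the definition of $W_{n+1}$ verbatim, with $v$ as the last vertex.

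The main obstacle is the single computation in the middle paragraph: identifying the BB weave with the rotational tournament and producing the multiplier isomorphism onto $T_n$. Once this and the vertex-transitivity argument for $U_n$ are in hand, every remaining bullet is either a routine parity check or a reduction to this case. I would also handle the degenerate ranges $n \le 3$ (where a weave can have more than one type) separately by direct inspection.
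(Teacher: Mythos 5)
Your proof is correct. Note that the paper itself offers no argument for this proposition --- it is introduced with ``the following facts can be easily checked'' --- so there is no authorial proof to compare against; what you have done is supply the verification the paper omits. Your organization is sound: the homogeneous-set bullets really are one-line parity checks (I verified several, e.g.\ for odd $n$ and BB/BF the check of $\{v,w_n\}$ against $w_j$ splits exactly as you describe), and the only genuine content is the identification of the odd BB weave with $T_n$. Your multiplier isomorphism $\phi(w_i)=v_{ki \bmod n}$ works: the BB weave is the circulant tournament on the odd residues, $k^{-1}\equiv -2 \pmod n$ sends $\{1,\dots,k\}$ to $\{n-2,n-4,\dots,1\}$, so multiplication by $k$ carries the odd residues onto $\{1,\dots,k\}$, the out-set of $T_n$. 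The passage to $U_n$ via the rotation automorphism $v_i\mapsto v_{i+k}$ conjugating the reversed block $\{v_{k+1},\dots,v_{2k}\}$ back to $\{v_1,\dots,v_k\}$ is also correct, as are the three reductions of the even-$n$ cases for $W+v$ to odd weaves by appending or prepending $v$ (prepending does swap the parity classes and turns FB into BF, as you claim). This is arguably the cleanest way to dispatch the proposition, since it replaces a case-by-case edge audit for $T_n$ and $U_n$ with a single modular computation.
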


\begin{cor} \label{primeweave}
Let $W$ be a weave.
\renewcommand{\labelenumi}{(\roman{enumi})}
\begin{enumerate}
\item If $|V(W)| \ge 3$ and $W$ is prime, then $W$ is $T_n$ or $U_n$ for some $n$.
\item If $|V(W)| \ge 2$ and $v \notin V(W)$ is a vertex as in Proposition \ref{weave}, and $W+v$ is prime, then $W+v$ is $T_n$, $U_n$, or $W_n$ for some $n$.
\end{enumerate}
\end{cor}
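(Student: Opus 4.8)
The plan is to derive both parts directly from Proposition \ref{weave} by a finite case analysis on the \emph{type} of $W$ (FF, FB, BF, or BB) together with the parity of $n = |V(W)|$. In each case Proposition \ref{weave} either exhibits an explicit homogeneous set or identifies $W$ (resp.\ $W+v$) as one of $T_m$, $U_m$, $W_m$. The only nonformal work is to check, whenever a homogeneous set is produced, that it is \emph{nontrivial}, so that primality is contradicted; this is where the hypotheses $|V(W)| \ge 3$ and $|V(W)| \ge 2$ are used.

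For part (i), suppose $W$ is prime with $n = |V(W)| \ge 3$. If $W$ is FF then it is transitive, and a transitive tournament on $\ge 3$ vertices is decomposable (as noted after the definition of $I_n$), a contradiction; so $W$ is not FF. If $n$ is even (hence $n \ge 4$), then the FB, BF, and BB cases of Proposition \ref{weave} give homogeneous sets of sizes $n-1$, $n-1$, and $2$ respectively, each satisfying $1 < |X| < n$ because $n \ge 4$; so $W$ would be decomposable, a contradiction. Hence $n$ is odd. For $n$ odd the FB case gives the homogeneous set $\{w_1,\ldots,w_{n-1}\}$ of size $n-1$, again nontrivial since $n \ge 3$, a contradiction. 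This eliminates FF and FB, leaving only the BF case, in which $W = U_n$, and the BB case, in which $W = T_n$, as desired.

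For part (ii), suppose $W + v$ is prime with $n = |V(W)| \ge 2$, where $v$ is as in Proposition \ref{weave}, so $|V(W+v)| = n+1$. Suppose first that $n$ is odd; since $n \ge 2$ this forces $n \ge 3$. Then Proposition \ref{weave} gives a homogeneous set of $W+v$: of size $n$ in the FF and FB cases, and of size $2$ in the BF and BB cases. In every case $1 < |X| < n+1$ since $n \ge 2$, so $W+v$ is decomposable, a contradiction. Hence $n$ is even, and then Proposition \ref{weave} identifies $W+v$ directly: it is $W_{n+1}$ if $W$ is FF, $U_{n+1}$ if $W$ is FB or BF, and $T_{n+1}$ if $W$ is BB. In all cases $W+v$ is one of $T_{n+1}$, $U_{n+1}$, $W_{n+1}$, completing the proof.

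There is essentially no hard step here: the argument is bookkeeping over the eight (type, parity) combinations, with Proposition \ref{weave} doing all the structural work. The one point that requires care is verifying the strict inequalities $1 < |X| < |V(\cdot)|$ in each ``bad'' case, which is exactly why the hypotheses $|V(W)| \ge 3$ in (i) and $|V(W)| \ge 2$ in (ii) are needed and cannot be weakened. I would also remark that a weave can belong to more than one type when $n \le 3$, but this causes no ambiguity: in those degenerate cases the candidate conclusions coincide (for instance the cyclic triangle is simultaneously $T_3 = U_3 = W_3$), so the identification remains well defined.
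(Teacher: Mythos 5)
Your proof is correct and follows exactly the route the paper intends: the corollary is stated as a direct consequence of Proposition \ref{weave}, and your case analysis over type and parity, with the nontriviality checks on the homogeneous sets, is precisely the "easily checked" verification the paper omits. No gaps.
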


The following fact will also be important.

\begin{prop} \label{delpair}
If $W = \langle w_1, \ldots, w_n \rangle$ is a weave and $1 \le i \le n-1$, then $W - \{w_i,w_{i+1}\}$ is a weave of the same type as $W$.
\end{prop}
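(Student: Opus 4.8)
The plan is to track what happens to the vertex labels when the two vertices are deleted and then relabeled. Write $W' = W - \{w_i, w_{i+1}\}$, and let $w_1', \ldots, w_{n-2}'$ be its vertices listed in the order they inherit from $W$. Explicitly, $w_j' = w_j$ for $1 \le j \le i-1$ and $w_j' = w_{j+2}$ for $i \le j \le n-2$. The single observation that makes everything work is that this relabeling preserves parity: the two deleted indices $i$ and $i+1$ have opposite parity, so exactly one odd-indexed and one even-indexed vertex are removed, and for $j \ge i$ the new index $j$ and the old index $j+2$ differ by $2$ and hence agree in parity, while for $j < i$ the index is unchanged. Thus each surviving vertex keeps its parity, and the relabeling is order-preserving by construction.

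With parity and relative order preserved, I would verify the three defining conditions of a weave one at a time, each inherited directly from $W$. For condition (1): if $a < b$ are new indices of opposite parity, then $w_a'$ and $w_b'$ have opposite parity in $W$ and $w_a'$ precedes $w_b'$, so $w_a' \ra w_b'$ by condition (1) for $W$. For condition (2): the odd-indexed vertices of $W'$ are precisely a subset of the odd-indexed vertices of $W$, listed in the same relative order, so whichever of (2a) or (2b) holds for $W$ holds verbatim for $W'$. Condition (3) is identical with ``odd'' replaced by ``even.'' Since the orientation among the odd vertices and the orientation among the even vertices are exactly those of $W$, the resulting weave $W'$ is of the same type as $W$.

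There is essentially no obstacle here beyond careful bookkeeping; the content is entirely in the parity remark, which is forced by deleting two consecutive (hence opposite-parity) vertices. The one thing to double-check is the boundary and degenerate cases---for instance $i = 1$ or $i = n-1$, and small $n$ where a weave may be of more than one type---but in each of these the index description above still applies and the inheritance of conditions (1)--(3) goes through unchanged.
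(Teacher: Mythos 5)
Your proof is correct: the paper states Proposition \ref{delpair} without proof (as an easily checked fact), and your parity-preservation argument---deleting two consecutive, hence opposite-parity, indices shifts all later labels by $2$ and so preserves both the relative order and the parity of every surviving vertex---is exactly the intended verification, from which conditions (1)--(3) and the type are inherited verbatim.
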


\begin{cor} \label{weaveiso}
If $W = \langle w_1, \ldots, w_n \rangle$ is a weave, then the subtournaments $W - \{w_1,w_2\},\, W - \{w_2,w_3\},\, \ldots\, ,\, W - \{w_{n-1},w_n\}$ are all isomorphic to each other. Furthermore, there is an isomorphism $\phi$ between $W - \{w_i,w_{i+1}\}$ and $W - \{w_j,w_{j+1}\}$ such that if $\phi(w_k) = w_\ell$, then $k$ and $\ell$ have the same parity.
\end{cor}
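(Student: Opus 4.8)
The plan is to derive everything from Proposition \ref{delpair} together with the observation that a weave is completely determined, up to isomorphism, by its type and its number of vertices. Proposition \ref{delpair} tells us that each of $W - \{w_1,w_2\},\, \ldots,\, W - \{w_{n-1},w_n\}$ is a weave of the same type as $W$ on $n-2$ vertices, taken with the ordering inherited from $W$. So if I can show that two weaves of the same type and the same number of vertices are always isomorphic, the first assertion follows immediately, and a careful choice of isomorphism will yield the parity statement.

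To see the uniqueness of weaves of a given type and size, note that conditions (1)--(3) in the definition of a weave pin down every edge once the type and the ordering are fixed: by (1) every opposite-parity pair points forward, by (2) every odd-odd pair points in the direction prescribed by the type, and by (3) every even-even pair points in the direction prescribed by the type. Hence, given two weaves $\langle x_1,\ldots,x_m \rangle$ and $\langle y_1,\ldots,y_m \rangle$ of the same type, the edge between $x_s$ and $x_t$ has the same orientation as the edge between $y_s$ and $y_t$ for all $s,t$, so the bijection $x_t \mapsto y_t$ is an isomorphism. In particular it maps the $t$-th vertex of one ordering to the $t$-th vertex of the other.

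It remains to track parities. I would give $W - \{w_i,w_{i+1}\}$ its inherited ordering, listing the surviving vertices $w_1,\ldots,w_{i-1},w_{i+2},\ldots,w_n$ in increasing index order as $\langle x_1,\ldots,x_{n-2} \rangle$; here $x_t = w_t$ for $t < i$ and $x_t = w_{t+2}$ for $t \ge i$. Since shifting an index by $0$ or by $2$ preserves its parity, the original index of $x_t$ is congruent to $t$ modulo $2$ in every case. Doing the same for $W - \{w_j,w_{j+1}\}$ to obtain $\langle y_1,\ldots,y_{n-2} \rangle$ and applying the isomorphism $\phi : x_t \mapsto y_t$ from the previous paragraph: if $x_t = w_k$ and $y_t = w_\ell$, then both $k$ and $\ell$ are congruent to $t \pmod 2$, so $\phi(w_k) = w_\ell$ with $k$ and $\ell$ of the same parity, as required.

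The argument is essentially bookkeeping, and I expect no serious obstacle; the one point that genuinely needs care is the parity tracking in the last paragraph, namely the observation that deleting a consecutive pair shifts each later index down by exactly two and hence preserves parity. This is precisely what makes the inherited ordering compatible with the ``weave parity'' used in conditions (1)--(3), and it is what forces $\phi$ to respect parity.
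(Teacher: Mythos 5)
Your proof is correct and follows exactly the route the paper intends: the corollary is stated as an immediate consequence of Proposition \ref{delpair}, with the isomorphism being the index-order-preserving bijection between two weaves of the same type and size, and the parity claim following because deleting a consecutive pair shifts each surviving index by $0$ or $2$. No gaps; this matches the paper's (unwritten) argument.
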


We are now ready to prove the claim. The case $|V(H)| = |V(G)| - 1$ is trivial, so assume $|V(H)| = |V(G)| - 2$. Suppose there is no prime subtournament of $G$ with $|V(H)| + 1$ vertices that has a subtournament isomorphic to $H$.
Let $\{u,v\} = V(G) \minus V(H)$, where $u \ra v$. We will call a weave $W = \langle w_1, \ldots, w_n \rangle$ a \emph{$u,v$-weave} if all of the following hold:
\renewcommand{\labelenumi}{(\alph{enumi})}
\begin{enumerate}
\item $W$ is a subtournament of $G$.
\item $u = w_j$ and $v = w_{j+1}$ for some $1 \le j \le n-1$.
\item $\{w_1,w_3,\ldots\}$ is a homogeneous set in $G - \{w_2,w_4,\ldots\}$.
\item $\{w_2,w_4,\ldots\}$ is a homogeneous set in $G - \{w_1,w_3,\ldots\}$.
\end{enumerate}
Since $\langle u,v \rangle$ is a $u,v$-weave, at least one $u,v$-weave with $\ge 2$ vertices exists. Let $W = \langle w_1, \ldots, w_n \rangle$ be a $u,v$-weave which maximizes $V(W)$. If $W = G$, then by Corollary \ref{primeweave} $G$ is $T_n$ or $U_n$, contradicting the assumptions of the theorem. So $|V(W)| < |V(G)|$.

Now, by Corollary \ref{weaveiso}, $W - \{w_i, w_{i+1}\}$ is isomorphic to $W - \{u,v\}$ for all $1 \le i \le n-1$. In fact, because of the second sentence of Corollary \ref{weaveiso} and (c) and (d) in the definition of a $u,v$-weave, we have that $G - \{w_i,w_{i+1}\}$ is isomorphic to $G - \{u,v\} = H$ for all $1 \le i \le n-1$.

Let $H^\prime = G - \{w_1,w_2\}$. Since $H^\prime$ is isomorphic to $H$, by our original assumption $H^\prime + w_2$ must be decomposable. It follows that either $w_2 \in Z(H^\prime)$ or $w_2 \in V_x(H^\prime)$ for some $x \in V(H^\prime)$. 
Suppose $w_2 \in V_x(H^\prime)$ for some $x \in V(H^\prime)$. We thus have
\begin{itemize}
\item $x \Ra \{w_3, w_5, \ldots\} \minus \{x\}$, and 
\item if $n \ge 4$, then $x \Ra \{w_4, w_6, \ldots\} \minus \{x\}$ if $W$ is FF or BF and $\{w_4, w_6, \ldots\} \minus \{x\} \Ra x$ if $W$ is FB or BB.
\end{itemize}

First suppose that $x \in V(G) \minus V(W)$. Since $\{x,w_2\}$ is a homogeneous set of $H^\prime + w_2 = G - w_1$ but it is not a homogeneous set of $G$, and $w_1 \ra w_2$, we must have $x \ra w_1$. Thus, $x \Ra \{w_1, w_3, \ldots\}$. Furthermore, by (d) in the definition of a $u,v$-weave, we have that $x \Ra \{w_2, w_4, \ldots\}$ if $x \Ra \{w_4, w_6, \ldots\}$ and $\{w_2, w_4, \ldots\} \Ra x$ if $\{w_4, w_6, \ldots\} \Ra x$. Finally, by Proposition \ref{union}, $\{x,w_2,w_4,\ldots\}$ is a homogeneous set in $G - \{w_1,w_3,\ldots\}$. Thus, $\langle x, w_1, \ldots, w_n \rangle$ is a $u,v$-weave. This contradicts the maximality of $W$, so $x \notin V(G) \minus V(W)$.

Now suppose $x \in V(W)$. We cannot have $x = w_i$ for any even $i \ge 4$ because $x \Ra \{w_3, w_5, \ldots\} \minus \{x\}$ but $w_i \ra w_3$ for all even $i \ge 4$. So $x = w_i$ for some odd $i \ge 3$. Now, since $w_2 \in V_{w_i}(H^\prime)$, for every $v \in V(G) \minus V(W)$ we have $d_{vw_2} = d_{vw_i}$. Thus, since $i$ is odd, from (c) and (d) in the definition of a $u,v$-weave we have that $V(W)$ is a homogeneous set of $G$. Since $2 \le |V(W)| < |V(G)|$ as mentioned earlier, this contradicts the primeness of $G$.

Thus, we cannot have $w_2 \in V_x(H^\prime)$ for any $x \in V(H^\prime)$. So $w_2 \in Z(H^\prime)$. By symmetry, we also have that $w_{n-1} \in Z(H^{\prime\prime})$, where $H^{\prime\prime} = G - \{w_{n-1},w_n\}$. If $n = 2$, then we have $w_1,w_2 \in Z(G - \{w_1,w_2\})$, and hence $G - \{w_1,w_2\}$ is a homogeneous set of $G$, a contradiction since $|V(G)| \ge 5$. So assume $n \ge 3$. Then since $w_2 \ra w_3$ and $w_2 \in Z(H^\prime)$, we have $w_2 \Ra V(H^\prime)$. Similarly since $w_{n-2} \ra w_{n-1}$ and $w_{n-1} \in Z(H^{\prime\prime})$, we have $V(H^{\prime\prime}) \Ra w_{n-1}$. In particular, we have $w_2 \Ra V(G) \minus V(W) \Ra w_{n-1}$. If $n$ is odd, then (c) and (d) from the definition of a $u,v$-weave imply that $\{w_2,w_4,\ldots\} \Ra V(G) \minus V(W) \Ra \{w_2,w_4,\ldots\}$, a contradiction since $V(G) \minus V(W)$ is nonempty. So $n$ is even, and (c) and (d) imply that 
\[
\{w_2,w_4,\ldots\} \Ra V(G) \minus V(W) \Ra \{w_1,w_3,\ldots\}.
\]

Thus, $V(G) \minus V(W)$ is a homogeneous set of $G$, and as we showed before it is nonempty. Hence, $|V(G) \minus V(W)| = 1$. Let $\{v\} = V(G) \minus V(W)$. Then $G = W+v$, and by Corollary \ref{primeweave}, $G$ must be $T_{n+1}$, $U_{n+1}$, or $W_{n+1}$. This is a contradiction, completing the proof of Claim 1.
\end{proof}

\begin{proof}[Proof of Claim 2]
We first prove the following general proposition.

\begin{prop} \label{lemma2}
Let $G$ be a prime tournament and let $u \in V(G)$. Let $H_1$, $H_2$ be prime subtournaments of $G$ such that $V(H_1) \cup V(H_2) = V(G) \minus \{u\}$, $H_1 + u$ and $H_2 + u$ are decomposable, and the subtournament $H_{1,2}$ induced on $V(H_1) \cap V(H_2)$ is prime with $|V(H_{1,2})| \ge 3$. Then either $u \in V_x(H_1)$ for some $x \in V(H_1) \minus V(H_2)$ or $u \in V_y(H_2)$ for some $y \in V(H_2) \minus V(H_1)$.
\end{prop}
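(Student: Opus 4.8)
The plan is to argue by contradiction, assuming that $u \notin V_x(H_1)$ for every $x \in V(H_1) \minus V(H_2)$ and $u \notin V_y(H_2)$ for every $y \in V(H_2) \minus V(H_1)$. Write $A = V(H_1) \minus V(H_2)$, $B = V(H_2) \minus V(H_1)$, and $C = V(H_1) \cap V(H_2) = V(H_{1,2})$, so that $V(G) \minus \{u\} = A \cup B \cup C$, with $|C| \ge 3$ and $u \notin C$. Since $V(H_1) \cup V(H_2) = V(G) \minus \{u\}$, we have $u \notin V(H_1)$ and $u \notin V(H_2)$, and since $C \subseteq V(H_1), V(H_2)$ both $H_1$ and $H_2$ have at least three vertices. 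As $H_1 + u$ and $H_2 + u$ are decomposable, $u \notin \Ext(H_1)$ and $u \notin \Ext(H_2)$, so by Proposition \ref{ERlemma}(i) applied to $H_1$ and to $H_2$, together with our assumption, $u$ must lie in $Z(H_1)$ or in $V_x(H_1)$ for some $x \in C$, and likewise in $Z(H_2)$ or in $V_y(H_2)$ for some $y \in C$. This leaves four cases.

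The key device is to restrict each of these classifications down to the common prime subtournament $H_{1,2}$ using Proposition \ref{restrict}. If $u \in Z(H_1)$, then $V(H_1)$ is a homogeneous set of $H_1 + u$, so its intersection with $V(H_{1,2} + u)$, namely $C$, is a homogeneous set of $H_{1,2} + u$; that is, $u \in Z(H_{1,2})$. Similarly, if $u \in V_x(H_1)$ with $x \in C$, then $\{u,x\}$ lies inside $H_{1,2} + u$ and is homogeneous there, so $u \in V_x(H_{1,2})$. The analogous implications hold for $H_2$. Since $H_{1,2}$ is prime with at least three vertices and $u \notin V(H_{1,2})$, Proposition \ref{ERlemma}(i) applied to $H_{1,2}$ guarantees that $u$ lies in exactly one of the parts $Z(H_{1,2})$, $V_z(H_{1,2})$ for $z \in C$.

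I would then dispatch the four cases using this forced consistency. If $u \in Z(H_1)$ while $u \in V_y(H_2)$ with $y \in C$ (or the symmetric case), the restrictions place $u$ simultaneously in $Z(H_{1,2})$ and $V_y(H_{1,2})$, contradicting the disjointness of the partition. If $u \in V_x(H_1)$ and $u \in V_y(H_2)$ with $x, y \in C$, the restrictions put $u$ in both $V_x(H_{1,2})$ and $V_y(H_{1,2})$, forcing $x = y$; then $\{u,x\}$ is a homogeneous set of both $H_1 + u$ and $H_2 + u$, and since $V(H_1 + u) \cup V(H_2 + u) = V(G)$ with $\{u,x\}$ in the intersection, the Extension property (Proposition \ref{extend}) makes $\{u,x\}$ a nontrivial homogeneous set of $G$, contradicting primeness. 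Finally, if $u \in Z(H_1)$ and $u \in Z(H_2)$, then choosing any $c \in C$ and using that $u$ relates uniformly to all of $V(H_1)$ and all of $V(H_2)$, the orientation of the edge between $u$ and $c$ forces $u \Ra (V(G) \minus \{u\})$ or $(V(G) \minus \{u\}) \Ra u$; either way $V(G) \minus \{u\}$ is a nontrivial homogeneous set of $G$, again a contradiction.

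The crux of the argument, and the main obstacle, is this middle step: recognizing that restricting to $H_{1,2}$ converts the two a priori independent classifications of $u$ (with respect to $H_1$ and with respect to $H_2$) into statements about the single prime tournament $H_{1,2}$, where the partition of Proposition \ref{ERlemma}(i) forces them to agree. Once this is in place, the four cases are routine, using only Proposition \ref{extend} and the definition of a homogeneous set. The one point to check carefully is that in each $V_x$ subcase the witness $x$ genuinely lies in $C$ rather than in $A$ or $B$ --- which is exactly what the contradiction hypothesis rules out --- so that restricting to $H_{1,2}$ retains the homogeneous pair $\{u,x\}$.
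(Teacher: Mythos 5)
Your proposal is correct and follows essentially the same route as the paper: classify $u$ relative to each of $H_1$ and $H_2$ as lying in $Z$ or some $V_x$, observe the contradiction hypothesis forces any such $x$ into $V(H_{1,2})$, restrict to the common prime subtournament $H_{1,2}$, and dispatch the resulting cases using primeness of $H_{1,2}$ and of $G$ (with Extension handling the $x=y$ subcase exactly as in the paper). The only cosmetic difference is that you derive the mixed-case contradictions by citing the disjointness of the partition in Proposition~\ref{ERlemma}(i) for $H_{1,2}$, whereas the paper re-runs the underlying Subtraction and Union arguments directly; both are valid.
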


\begin{proof}
Since $H_1$ is prime and $H_1+u$ is decomposable, we have either $u \in Z(H_1)$ or $u \in V_x(H_1)$ for some $x \in H_1$, and likewise for $H_2$.

First, suppose $u \in Z(H_1)$ and $u \in Z(H_2)$. Then $V(H_1)$ is a homogeneous set of $H_1+u$ and $V(H_2)$ is a homogeneous set of $H_2+u$. Since $V(H_1) \cap V(H_2) \neq \varnothing$ by assumption, by Proposition \ref{union} we have that $V(H_1) \cup V(H_2) = V(G) \minus \{u\}$ is a homogeneous set of $G$. This contradicts the primeness of $G$, so we cannot have this case.

Next, suppose $u \in Z(H_1)$ and $u \in V_y(H_2)$ for some $y \in H_2$. If $y \in V(H_2) \minus V(H_1)$ then we are done, so assume $y \in V(H_1)$, and hence $y \in V(H_{1,2})$. Now, $\{u,y\}$ is a homogeneous set of $H_2+u$ and $V(H_1)$ is a homogeneous set of $H_1+u$; restricting to the subtournament $H_{1,2} + u$, we have that $\{u,y\}$ and $V(H_{1,2})$ are homogeneous sets of $H_{1,2} + u$. Applying Proposition \ref{subtract}, we have that $V(H_{1,2}) \minus \{y\}$ is a homogeneous set of $H_{1,2} + u$, and hence $V(H_{1,2}) \minus \{y\}$ is a homogeneous set of $H_{1,2}$. Since $|V(H_{1,2})| \ge 3$, this contradicts the primeness of $H_{1,2}$.

Finally, suppose $u \in V_x(H_1)$ for some $x \in V(H_1)$ and $u \in V_y(H_2)$ for some $y \in V(H_2)$. If either $x \in V(H_1) \minus V(H_2)$ or $y \in V(H_2) \minus V(H_1)$ then we are done, so assume $x,y \in V(H_{1,2})$. If $x = y$, then $\{u,x\}$ is a homogeneous set of both $H_1+u$ and $H_2+u$, so by Proposition \ref{extend}, $\{u,x\}$ is a homogeneous set of $G$, a contradiction. If $x \neq y$, then restricting to $H_{1,2} + u$ and applying Proposition \ref{union}, we have that $\{u,x,y\}$ is a homogeneous set of $H_{1,2} + u$, so $\{x,y\}$ is a homogeneous set of $H_{1,2}$. As before, this is a contradiction, which completes the proof. 
\end{proof}

We can now prove Claim 2. Assume the hypotheses of Claim 2. Let $H$ be a subtournament of $G$ which is isomorphic to $T_n$, $U_n$, or $W_n$. Let $H^\prime $ and $H^{\prime\prime}$ be prime subtournaments of $H$ with $n-2$ and $n-4$ vertices, respectively. In other words, if $H$ is isomorphic to $T_n$, then $H^\prime$ is isomorphic to $T_{n-2}$ and $H^{\prime\prime}$ is isomorphic to $T_{n-4}$; likewise for $U_n$ and $W_n$.
We wish to prove there is a prime subtournment of $G$ with $n-1$ vertices that has a subtournament isomorphic to $H^\prime$.

Suppose the contrary. Let $\{u\} = V(G) \minus V(H)$. By Proposition \ref{weave}, we can write $H$ as $W+v$, where $W = \langle w_1, \ldots, w_{n-1} \rangle$ is a weave and $v$ is as in Proposition \ref{weave}. For distinct integers $1 \le i_1,\ldots,i_r \le n-1$,
let $H_{i_1,\ldots,i_r} = H - \{w_{i_1},\ldots,w_{i_r}\}$. Note that $H_{i,i+1}$ is isomorphic to $H^\prime$ for all $1 \le i \le n-2$. Hence, by assumption, $H_{i,i+1} + u$ must be decomposable for all $1 \le i \le n-2$.

Now, for distinct integers $1 \le i,j \le n-1$, we define $x_{i,j} \in V(H_{i,j}) \cup \{\infty\}$ as follows: If $u \in V_x(H_{i,j})$ for some $x \in V(H_{i,j})$, then let $x_{i,j} = x$; otherwise, let $x_{i,j} = \infty$. By Proposition \ref{ERlemma}(i), $x_{i,j}$ is well-defined.
Now, suppose $1 \le i,j \le n-2$ are integers such that $\{i,i+1\} \cap \{j,j+1\} = \varnothing$. Then $H_{i,i+1,j,j+1}$ is isomorphic to $H^{\prime\prime}$, and is hence prime. Thus, applying Proposition \ref{lemma2} with $H_{i,i+1}$ as $H_1$ and $H_{j,j+1}$ as $H_2$, we have that for all such $i,j$, either $x_{i,i+1} \in \{w_j,w_{j+1}\}$ or $x_{j,j+1} \in \{w_i,w_{i+1}\}$. We will denote this fact as ($\ast$).

Applying ($\ast$) with $i = 1$ and $j = n-2$, we have that either $x_{1,2} \in \{w_{n-2},w_{n-1}\}$ or $x_{n-2,n-1} \in \{w_1,w_2\}$. Without loss of generality, assume 
\[
x_{1,2} \in \{w_{n-2},w_{n-1}\}.
\]
Now, applying ($\ast$) with $i = 1$ and $j = 3$, we have that either $x_{1,2} \in \{w_3,w_4\}$ or $x_{3,4} \in \{w_1,w_2\}$.  
Since $n \ge 6$ and $x_{1,2} \in \{w_{n-2},w_{n-1}\}$, we must have 
\[
x_{3,4} \in \{w_1,w_2\}.
\]
Finally, applying ($\ast$) with $i = 3$ and $j = 5$, we have that either $x_{3,4} \in \{w_5,w_6\}$ or $x_{5,6} \in \{w_3,w_4\}$. Since we already have $x_{3,4} \in \{w_1,w_2\}$, we must have 
\[
x_{5,6} \in \{w_3,w_4\}.
\]

Now, by the definition of $x_{i,j}$, we have that $\{u,x_{1,2}\}$, $\{u,x_{3,4}\}$, and $\{u,x_{5,6}\}$ are homogeneous sets of $H_{1,2}$, $H_{3,4}$, and $H_{5,6}$, respectively.
Restricting to the subtournament $H_{1,2,3,4} + \{u,x_{1,2},x_{3,4}\} = H_{1,2,3,4} + \{u,x_{3,4}\}$ (the equality holds because $x_{1,2} \in \{w_{n-2},w_{n-1}\} \subseteq V(H_{1,2,3,4})$), we have that $\{u,x_{1,2}\}$ and $\{u,x_{3,4}\}$ are homogeneous sets of $H_{1,2,3,4} + \{u,x_{3,4}\}$, and hence by Proposition \ref{union} and restriction, we have that $\{x_{1,2},x_{3,4}\}$ is a homogeneous set of $H_{1,2,3,4} + x_{3,4}$. Similarly, $\{x_{3,4},x_{5,6}\}$ is a homogeneous set of $H_{3,4,5,6} + x_{5,6}$. 

Let $i_{1,2}, i_{3,4}, i_{5,6}$ be the integers such that $x_{1,2} = w_{i_{1,2}}$, $x_{3,4} = w_{i_{3,4}}$, and $x_{5,6} = w_{i_{5,6}}$. Then $\{w_{i_{1,2}},w_{i_{3,4}}\}$ is a homogeneous set of $H_{1,2,3,4} + w_{i_{3,4}}$. In particular, we have $d_{vw_{i_{1,2}}} = d_{vw_{i_{3,4}}}$. Hence, by the definition of $v$, $i_{1,2}$ and $i_{3,4}$ have the same parity. Similarly, $i_{3,4}$ and $i_{5,6}$ have the same parity, so $i_{1,2}$, $i_{3,4}$, and $i_{5,6}$ all have the same parity. Suppose these numbers are even. Then $i_{1,2} = n-1$ and $i_{3,4} = 2$. However, we have $w_2 \ra w_{n-2}$ and $w_{n-2} \ra w_{n-1}$ in $H_{1,2,3,4} + w_{i_{3,4}}$, which contradicts the fact that $\{w_{i_{1,2}},w_{i_{3,4}}\} = \{w_2,w_{n-1}\}$ is a homogeneous set of $H_{1,2,3,4} + w_{i_{3,4}}$. Similarly, if the numbers are odd, then $i_{3,4} = 1$ and $i_{5,6} = 3$, but $w_1 \ra w_2$ and $w_2 \ra w_3$ in $H_{3,4,5,6} + w_{i_{5,6}}$, contradicting the fact that $\{w_{i_{3,4}},w_{i_{5,6}}\} = \{w_1,w_3\}$ is a homogeneous set of $H_{3,4,5,6} + w_{i_{5,6}}$. This completes the proof. 

\end{proof}

\subsection{An application to $D_4$-free tournaments} \label{diamondsection}

We conclude this section by using Theorem \ref{grow} to give a simple proof of a structural theorem.
Let $D_4$ be the tournament on 4 vertices consisting of a cyclic triangle $C$ and a vertex $v$ with $v \Ra C$. Let $D_4^\ast$ be the tournament formed from $D_4$ by reversing all of its edges. A well-known theorem classifies all tournaments that exclude both $D_4$ and $D_4^\ast$.

\begin{thm}[Gnanvo and Ille \cite{GI}, Lopez and Rauzy \cite{LR}] \label{diamonds}
A prime tournament $G$ with $|V(G)| \ge 5$ does not have a subtournament isomorphic to $D_4$ or $D_4^\ast$ if and only if $G$ is $T_n$ for some odd $n \ge 5$.
\end{thm}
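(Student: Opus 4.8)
The plan is to first recast the excluded-subtournament condition locally. Observe that $G$ contains $D_4$ exactly when some vertex $u$ has a cyclic triangle inside its out-neighborhood $A_G(u)$, and $G$ contains $D_4^\ast$ exactly when some $u$ has a cyclic triangle inside its in-neighborhood $B_G(u)$. Thus $G$ is $D_4$-free and $D_4^\ast$-free if and only if every $A_G(u)$ and every $B_G(u)$ induces a transitive subtournament. For the ``if'' direction I would take $G = T_n$ with $n = 2k+1 \ge 5$ and note that $A_{T_n}(v_i) = \{v_{i+1},\dots,v_{i+k}\}$ and $B_{T_n}(v_i) = \{v_{i-1},\dots,v_{i-k}\}$ are each ``arcs'' of $k$ consecutive vertices; within such an arc any $v_a, v_b$ with $a<b$ satisfy $b - a \le k-1 < k$, so all edges point forward and the arc is transitive. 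Hence no out- or in-neighborhood contains a cyclic triangle, and $T_n$ is $D_4$-free and $D_4^\ast$-free.

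For the ``only if'' direction both excluded properties are hereditary, so every subtournament of $G$ is again $D_4$-free and $D_4^\ast$-free. I would first dispose of $U_n$ and $W_n$: a direct check shows that $U_5$ and $W_5$ each contain a vertex dominating a cyclic triangle (a copy of $D_4$), and since $U_n$ contains $U_5$ and $W_n$ contains $W_5$ for odd $n \ge 5$, all the tournaments $U_n, W_n$ contain $D_4$. Now suppose for contradiction that $G$ is prime, $D_4$-free, $D_4^\ast$-free, $|V(G)| \ge 5$, yet $G \ne T_n$ for every odd $n$; combined with the previous remark, $G$ is not $T_n$, $U_n$, or $W_n$ for any odd $n$. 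By Corollary \ref{contains5}, $G$ has a prime $5$-vertex subtournament, which must be one of $T_5, U_5, W_5$; since $U_5$ and $W_5$ contain $D_4$, it must be $T_5$. If $|V(G)| = 5$ this already gives $G = T_5$, a contradiction, so $|V(G)| \ge 6$, and I apply Theorem \ref{grow} with $H = T_5$ (legal since $G$ is not $T_n, U_n, W_n$ and $5 \le 5 \le |V(G)|-1$) to produce a prime $6$-vertex subtournament $H_6 \subseteq G$ that contains a copy of $T_5$. As a subtournament of $G$, $H_6$ is $D_4$-free and $D_4^\ast$-free.

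Everything then reduces to one key lemma, which I expect to be the main obstacle: every prime one-vertex extension $T_5 + v$ contains $D_4$ or $D_4^\ast$. Applied to $H_6 \cong T_5 + v$ this contradicts the previous paragraph and finishes the proof. To prove the lemma I would set $S = A_{T_5}(v)$ and $\bar S = B_{T_5}(v)$, so $S \sqcup \bar S = V(T_5)$; if $T_5 + v$ were both $D_4$-free and $D_4^\ast$-free, then neither $S$ nor $\bar S$ could contain a cyclic triangle. The combinatorial heart is a finite check on $T_5$: the cyclic-triangle-free subsets are exactly those of size $\le 2$ together with the five arcs $\{v_i, v_{i+1}, v_{i+2}\}$, since every other triple is cyclic and every $4$-subset contains a cyclic triangle. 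Because $|S| + |\bar S| = 5$ with both parts cyclic-triangle-free, one part must be a $3$-arc and the other the complementary $2$-arc. A short comparison of neighborhoods then shows that $v$ has the same in/out relations to the remaining vertices as some $x \in V(T_5)$, so $\{v,x\}$ is a homogeneous set and $T_5 + v$ is decomposable, contradicting primeness.

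I expect the only genuine work to be this last verification on $T_5$; all the rest is bookkeeping with heredity and a single application of the growing theorem. A remark I would include is that the reduction is arranged precisely so that all casework is pushed down to the five-vertex tournament, letting Theorem \ref{grow} do the climbing back up, so that no separate one-vertex-extension lemma for general $T_m$ is required.
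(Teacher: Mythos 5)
Your proof is correct, and its skeleton is the same as the paper's: reduce via Corollary \ref{contains5} and Theorem \ref{grow} to a prime six-vertex tournament containing $T_5$, then do a finite check showing that any such extension either contains a forbidden subtournament or has a homogeneous pair $\{v,x\}$, contradicting primeness. (Your finite check is right: the cyclic-triangle-free subsets of $T_5$ are exactly the sets of size at most $2$ and the five consecutive triples, which forces $A(v)\cap V(T_5)$ to be an arc $\{v_i,v_{i+1},v_{i+2}\}$ or its complement to be one, and in either case $v$ clones a vertex of $T_5$.) The one substantive difference is that the paper does not prove Theorem \ref{diamonds} directly; it states it as known and instead proves the strictly stronger Theorem \ref{diamond}, that excluding $D_4$ \emph{alone} already forces $G$ to be $T_n$, from which \ref{diamonds} is immediate. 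Its endgame therefore cannot use your symmetric ``neither $A(v)$ nor $B(v)$ contains a cyclic triangle'' dichotomy; instead it starts from a switch $u\ra v_i$, $v_{i+1}\ra u$ (which exists because $u\notin Z(T_5)$ in a prime extension) and forces the remaining edges one at a time using only $D_4$, again ending at a homogeneous pair. So your argument buys a cleaner, more symmetric case analysis at the cost of needing both exclusions; the paper's buys the one-sided strengthening. For the statement as posed, your proof is complete.
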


In fact, this theorem still holds true if we replace excluding both $D_4$ and $D_4^\ast$ with excluding only $D_4$. We give a short proof of this result without relying on Theorem \ref{diamonds} itself.

\begin{thm} \label{diamond}
A prime tournament $G$ with $|V(G)| \ge 3$ does not have a subtournament isomorphic to $D_4$ if and only if $G$ is $T_n$ for some odd $n \ge 3$.
\end{thm}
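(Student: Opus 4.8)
The plan is to prove both directions, with the nontrivial direction by strong induction on $|V(G)|$ built on the growing machinery of Theorem \ref{critical} and reduced to a single structural computation. Throughout I will use the reformulation that $G$ contains a copy of $D_4$ if and only if some vertex $v$ has a cyclic triangle inside its outneighbor set $A_G(v)$; equivalently, $G$ is $D_4$-free if and only if $A_G(v)$ induces a transitive subtournament for every $v$. I will also use that being $D_4$-free is hereditary. For the easy direction, if $G = T_n$ (with $n = 2k+1$) then $A_G(v_i) = \{v_{i+1},\ldots,v_{i+k}\}$, and for $a < b$ in $\{1,\ldots,k\}$ we have $v_{i+a} \ra v_{i+b}$ since $b - a \le k-1 < k$; thus every outneighborhood is transitive and $G$ is $D_4$-free.

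For the forward direction I argue by strong induction on $N = |V(G)|$. The base cases $N = 3$ and $N = 5$ are handled by inspection: the only prime tournament on $3$ vertices is $T_3$, and among $T_5, U_5, W_5$ only $T_5$ is $D_4$-free, since $U_5$ and $W_5$ each contain a vertex dominating a cyclic triangle (in $U_5$, $v_2 \Ra \{v_1,v_3,v_4\}$, a cyclic triangle; in $W_5$, $w_2 \Ra \{w_3,w_4,w_5\}$). For the inductive step $N \ge 6$: if $G$ is $T_n$, $U_n$, or $W_n$, then since $U_n$ and $W_n$ contain $U_5$ and $W_5$ respectively (and hence $D_4$), $D_4$-freeness forces $G = T_N$ with $N$ odd. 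Otherwise $G$ is not $T_n$, $U_n$, or $W_n$, so by Theorem \ref{critical} it has a prime subtournament $G_0$ with $N-1$ vertices; $G_0$ is $D_4$-free and prime with $N - 1 \ge 5$ vertices, so by induction $G_0 \cong T_{N-1}$ with $N-1$ odd. Thus $G$ is a prime $D_4$-free tournament on $N = (N-1)+1$ vertices containing a copy of $T_{N-1}$, which the claim below rules out. (One could equally well invoke Theorem \ref{grow} directly, starting from a copy of $T_5$ obtained via Corollary \ref{contains5} and growing the largest $T_m$ in $G$; the reduction to the same claim is identical.)

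The heart of the proof, and the main obstacle, is the \emph{crux claim}: for odd $m = 2k+1 \ge 5$ there is no prime $D_4$-free tournament of the form $T_m + u$. I will prove this by analyzing the sign vector $\epsilon \in \{+,-\}^m$ with $\epsilon_i = +$ if $u \ra v_i$ and $\epsilon_i = -$ if $v_i \ra u$. $D_4$-freeness of $T_m + u$ is exactly two conditions: (i) the set $A(u) = \{v_i : \epsilon_i = +\}$ is transitive in $T_m$; and (ii) for each $i$ with $\epsilon_i = -$ the outneighborhood of $v_i$ becomes $\{v_{i+1},\ldots,v_{i+k},u\}$, and since $\{v_{i+1},\ldots,v_{i+k}\}$ is already transitive in the order $v_{i+1} < \cdots < v_{i+k}$, transitivity of this enlarged set forces the window $(\epsilon_{i+1},\ldots,\epsilon_{i+k})$ to contain no adjacent pattern $+\,-$. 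Reading (ii) in terms of descents (positions $j$ with $\epsilon_j = +$, $\epsilon_{j+1} = -$), it says every descent is preceded by $k$ consecutive $+$'s, so every maximal block of $+$'s has length $\ge k$.

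A counting argument finishes the claim. If $r$ is the number of maximal $+$-blocks (equal to the number of $-$-blocks), then $rk \le |A(u)| \le m - r$, forcing $r(k+1) \le 2k+1$, hence $r = 1$ (note $r \ge 1$ since $G$ prime and strongly connected gives $u$ both in- and out-neighbors). So $\epsilon = {+}^{p}\,{-}^{m-p}$ cyclically with $p \ge k$; applying (i) to the consecutive block $\{v_1,\ldots,v_p\}$ forces $p \le k+1$, because a block of $k+2$ consecutive vertices already contains the cyclic triangle $v_1 \ra v_{k+1} \ra v_{k+2} \ra v_1$. The two survivors $\epsilon = {+}^{k}\,{-}^{k+1}$ and $\epsilon = {+}^{k+1}\,{-}^{k}$ each make $u$ a clone of a single vertex of $T_m$, so $\{u, v_i\}$ is homogeneous for some $i$, contradicting primeness. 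This establishes the crux claim and completes the induction. The real difficulty is isolating conditions (i) and (ii) correctly and driving the descent/counting argument down to the two clone patterns; the rest is routine bookkeeping with the homogeneous-set propositions.
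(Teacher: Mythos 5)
Your proof is correct, but it takes a genuinely different route from the paper's. The paper avoids induction entirely: it uses Corollary \ref{contains5} to locate a prime $T_5$ inside $G$, then applies the growing theorem (Theorem \ref{grow}) once to produce a prime six-vertex subtournament $H$ containing that $T_5$, and derives the contradiction inside $H$ alone --- a four-step forced-edge chase showing the extra vertex $u$ satisfies $\{v_3,v_4\} \Ra u \Ra \{v_1,v_2\}$, so that $\{u,v_5\}$ is a nontrivial homogeneous set of $H$. The entire contradiction lives in a single six-vertex configuration. You instead run a strong induction on $|V(G)|$ via Theorem \ref{critical}, which forces the $(N-1)$-vertex prime subtournament to be $T_{N-1}$ and reduces everything to your crux claim: no prime $D_4$-free tournament of the form $T_m+u$ exists for odd $m = 2k+1 \ge 5$. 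Your sign-vector analysis of that claim checks out --- condition (ii) correctly translates to ``every maximal $+$-block has length $\ge k$,'' the count $r(k+1) \le 2k+1$ forces a single block, condition (i) caps the block at $k+1$ via the cyclic triangle $v_1 \ra v_{k+1} \ra v_{k+2} \ra v_1$, and the two surviving patterns $+^k-^{k+1}$ and $+^{k+1}-^{k}$ make $u$ a clone of $v_m$ or $v_1$ respectively, contradicting primeness. What your approach buys is a stronger intermediate statement (a complete classification of the one-vertex $D_4$-free extensions of $T_m$: they are exactly the clones) and a more systematic argument; what it costs is length, since the paper's observation that a single six-vertex case suffices makes the whole proof fit in a short paragraph. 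One small bookkeeping point: when $N-1$ is even your inductive hypothesis yields an immediate contradiction rather than $G_0 \cong T_{N-1}$, which is fine but worth saying explicitly.
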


\begin{proof}
To prove one direction, note that if $G$ is $T_n$ for some odd $n \ge 3$, then the outneighborhood of every vertex $v \in V(G)$ forms a transitive tournament. Since $D_4$ consists of a vertex whose outneighborhood is a cyclic triangle, $G$ does not have a subtournament isomorphic to $D_4$.

We now prove the other direction. Suppose $G$ is a prime tournament with $|V(G)| \ge 3$ that does not have a subtournament isomorphic to $D_4$. If $|V(G)| < 5$, then $G$ is $T_3$ and we are done. So assume $|V(G)| \ge 5$. $G$ cannot be $U_n$ or $W_n$ for any $n$ because these tournaments have subtournaments isomorphic to $D_4$, and if $G$ is $T_n$ for some $n$ then we are done. So assume $G$ is not $T_n$, $U_n$, or $W_n$ for any $n$.

By Corollary \ref{contains5}, $G$ has a prime subtournament $H_5$ with 5 vertices, and since $U_n$ and $W_n$ each have $D_4$ subtournaments, $H_5$ must be $T_5$. If $G = H_5$, we are done; otherwise, by Theorem \ref{grow2}, $G$ has a prime subtournament $H$ with 6 vertices that has a subtournament isomorphic to $T_5$. Let $H^\prime$ be a subtournament of $H$ isomorphic to $T_5$, and let $\{u\} = V(H) \minus V(H^\prime)$. Let the vertices of $H^\prime$ be $v_1,\ldots,v_5$ as in Definition \ref{TUW}.

Since $H$ is prime, we have $u \notin Z(H^\prime)$. Hence, there must be some $1 \le i \le 5$ such that $u \ra v_i$ and $v_{i+1} \ra u$ (where the indices are taken modulo 5). Without loss of generality, assume $u \ra v_2$ and $v_3 \ra u$. Then $\{u,v_2,v_3\}$ forms a cyclic triangle. Since $v_1 \Ra \{v_2,v_3\}$, we must have $u \ra v_1$ (or else $\{v_1,u,v_2,v_3\}$ would form a $D_4$). Now, $\{v_1,v_2,v_4\}$ forms a cyclic triangle, and $u \Ra \{v_1,v_2\}$, so we must have $v_4 \ra u$.
Thus, $\{v_3,v_4\} \Ra u \Ra \{v_1,v_2\}$. But then $\{u,v_5\}$ is a homogeneous set of $H$, which contradicts the primeness of $H$. This completes the proof.
\end{proof}

\begin{cor} \label{diamondgeneral}
A tournament $G$ does not have a subtournament isomorphic to $D_4$ if and only if it can be written as $T_n(I^1,\ldots,I^n)$ or $I_2(T_n(I^1,\ldots,I^n), I)$, where $n \ge 1$ is odd, $I_2$ has the standard ordering of vertices, and $I^1,\ldots,I^n,I$ are transitive tournaments.
\end{cor}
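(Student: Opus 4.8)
The plan is to prove both implications through a single unifying observation: since $D_4$ is a cyclic triangle dominated by one vertex, a tournament is $D_4$-free if and only if no vertex $w$ has a cyclic triangle inside its out-neighbourhood $A_G(w)$. The proof then rests on Theorem~\ref{diamond} (which identifies the prime $D_4$-free tournaments as the $T_n$) together with the two decompositions recorded in Section~2: the prime quotient decomposition $G = G'(H_1,\dots,H_n)$ and the ordering $G = I_s(S_1,\dots,S_s)$ of the strong components with $S_1 \Ra S_2 \Ra \cdots \Ra S_s$.

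For the ``if'' direction I would verify that both forms have the property above. In $T_n(I^1,\dots,I^n)$ a vertex $w \in V_i$ has $A_G(w) = (A_G(w)\cap V_i) \cup \bigcup_{v_i \ra v_j} V_j$; here $A_G(w)\cap V_i$ is a subtournament of the transitive $I^i$, the union is a substitution of transitive tournaments into the out-neighbourhood of $v_i$ in $T_n$ (which is transitive, as noted in the proof of Theorem~\ref{diamond}), and the first part dominates the second, so $A_G(w)$ is transitive and has no cyclic triangle. For $I_2(T_n(I^1,\dots,I^n),I) = I_2(A,I)$ with $A \Ra I$: a vertex of $I$ has its out-neighbourhood inside the transitive $I$, while a vertex $w \in A$ has $A_G(w) = A_A(w) \cup I$ with $A_A(w) \Ra I$; since $A$ is $D_4$-free (by the previous case) $A_A(w)$ has no cyclic triangle, $I$ is transitive, and no cyclic triangle can straddle the two parts because all edges between them point the same way. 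So both forms are $D_4$-free.

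For the ``only if'' direction I would split on strong connectivity. If $G$ is strongly connected then either $|V(G)|=1$, giving $G = T_1(I^1)$, or $|V(G)| \ge 3$; in the latter case its prime quotient $G'$ cannot be $I_1$ or $I_2$ (an $I_2$-substitution is not strongly connected), so $|V(G')| \ge 3$, and since $G'$ is isomorphic to a subtournament of $G$ it is $D_4$-free, whence $G' = T_n$ for some odd $n \ge 3$ by Theorem~\ref{diamond}. Because $n \ge 3$, every vertex of $T_n$ has an in-neighbour, so every block $V_j$ is dominated by some vertex of $G$; a cyclic triangle inside any $H_j$ would then create a $D_4$, forcing each $H_j$ to be transitive. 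Thus $G = T_n(I^1,\dots,I^n)$, the first form. If instead $G$ is not strongly connected, then $s \ge 2$, and for every $j \ge 2$ some vertex of $S_1$ dominates all of $S_j$, so $S_j$ has no cyclic triangle and is transitive; hence $I := I_{s-1}(S_2,\dots,S_s)$ is transitive. The component $S_1$ is strongly connected and $D_4$-free, so by the case just proved $S_1 = T_n(I^1,\dots,I^n)$ for some odd $n \ge 1$, and regrouping gives $G = I_2(S_1,I)$, the second form.

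The main obstacle I anticipate is the reduction machinery in the ``only if'' direction: showing cleanly that the prime quotient must be some $T_n$ (via the observation that it embeds as a subtournament and invoking Theorem~\ref{diamond}) and that each substituted block is forced to be transitive, and then separating the strongly connected case (which yields the first form) from the general case (which yields the second) so that the output is exactly one of the two permitted shapes. The remaining verifications---transitivity of out-neighbourhoods and the impossibility of a cyclic triangle straddling a one-way cut---are routine.
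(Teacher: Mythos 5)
Your proof is correct, and its backbone coincides with the paper's: both directions ultimately rest on Theorem~\ref{diamond} applied to a prime quotient together with the observation that a block dominated by some outside vertex cannot contain a cyclic triangle and is therefore transitive. Where you genuinely diverge is in how the ``only if'' direction disposes of the degenerate case in which the prime quotient is $I_2$. The paper writes $G = G^\prime(H_1,\ldots,H_n)$ immediately; when $G^\prime = I_2$ it forces $H_2$ transitive, normalizes to $P_2(H_1,I_k)$ with $k$ maximal, decomposes $H_1$ a second time, and uses the maximality of $k$ to rule out $H_1$ itself having an $I_2$ quotient. You instead branch on strong connectivity at the outset: for strongly connected $G$ with $|V(G)| \ge 3$ the quotient cannot be $I_2$, so it has at least three vertices and Theorem~\ref{diamond} makes it some $T_n$ with $n \ge 3$, whence every block has an in-dominating block and is forced transitive; for non-strongly-connected $G$ the decomposition $I_s(S_1,\ldots,S_s)$ shows $S_2,\ldots,S_s$ are all dominated by $S_1$, hence transitive, and their substitution into $I_{s-1}$ is again transitive, so everything after $S_1$ collapses into the single tail $I$ and the strongly connected case applies to $S_1$. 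Your route trades the paper's maximality argument and double decomposition for one extra pass through strong components; it is arguably the more transparent explanation of why exactly one transitive tail can appear in the second form. Both arguments are sound, and your ``if'' direction is the same transitive-out-neighbourhood verification the paper gives, just spelled out in more detail.
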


\begin{proof}
If $G$ can be written in one of the forms mentioned, then the outneighborhood of every vertex $v \in V(G)$ forms a transitive tournament, and hence $G$ does not have a subtournament isomorphic to $D_4$. This proves one direction.

Now, suppose $G$ does not have a subtournament isomorphic to $D_4$. If $G$ has only one vertex, then we can write $G$ as $T_1(T_1)$, and we are done. So assume $|V(G)| \ge 2$.
Write $G$ as $G^\prime(H_1,\ldots,H_n)$, where $G^\prime$ is a prime tournament with $|V(G^\prime)| \ge 2$. First, suppose that $|V(G^\prime)| \ge 3$. Then by Theorem \ref{diamonds}, $G^\prime$ is $T_n$ for some odd $n$. Now, if any of $H_1,\ldots,H_n$, say $H_i$, contains a cyclic triangle, then this triangle forms a $D_4$ with any vertex in $H_{i-1}$ (where the index is taken modulo $n$). So $H_i$ is transitive for all $i$, and thus $G$ is of the first form in the theorem, as desired.

Now suppose that $|V(G^\prime)| = 2$, and hence $G^\prime = P_2$. Then $G$ is of the form $I_2(H_1,H_2)$. If $H_2$ contains a cyclic triangle, then this triangle forms a $D_4$ with any vertex in $H_1$; thus, $H_2$ is transitive. Now, write $G$ as $P_2(H_1,I_k)$ in such a way that $k$ is maximal. If $|V(H_1)| = 1$ then $H_1 = I_1$ and we are done. Otherwise, write $H_1$ as $H_1^\prime(K_1,\ldots,K_m)$, where $H_1^\prime$ is prime and $|V(H_1^\prime)| \ge 2$. If $|V(H^\prime)| \ge 3$, then since $H_1$ is $D_4$-free, we have as before that $H_1^\prime$ is $T_m$ for odd $m$ and each $K_i$ is transitive. Thus $G$ is of the second form in the theorem, as desired. Otherwise, $H_1^\prime$ is $P_2$, and we have as before that $H_1$ is $P_2(K_1,I_j)$ for some $j$. But then $G$ can be written as $P_2(K_1,I_{j+k})$, contradicting the maximality of $k$. This completes the proof. 
\end{proof}

\section{Cyclic triangles in prime tournaments}

\subsection{Triangle-connectivity}

Let $G$ be a tournament. Say that two cyclic triangles $C$, $C^\prime$ in $G$ are \emph{adjacent} if they share exactly two vertices. We say that two cyclic triangles $C$, $C^\prime$ are \emph{triangle-connected} to each other if there is a sequence of cyclic triangles $C_1,\ldots,C_n$, $n \ge 1$, such that $C_1 = C$, $C_n = C^\prime$, and $C_i$ is adjacent to $C_{i+1}$ for all $1 \le i \le n-1$. We can ask the following question: when are all the cyclic triangles of a tournament triangle-connected to each other?

Call a tournament \emph{triangle-connected} if it is strongly connected and any two cyclic triangles in the tournament are triangle-connected. In general, nontrivial homogeneous sets that contain cyclic triangles can stop a tournament from being triangle-connected; to see this, let $G$ be a strongly connected tournament and suppose $X$ is a nontrivial homogeneous set of $G$. Then a cyclic triangle all of whose vertices are in $X$ cannot be triangle-connected to a cyclic triangle which has a vertex not in $X$, because if there were indeed a sequence of adjacent cyclic triangles connecting two such triangles, there must be some triangle in the sequence with two vertices $\{v_1,v_2\}$ in $X$ and one vertex $v_3$ not in $X$, a contradiction since $X$ is a homogeneous set so either $v_3 \Ra \{v_1,v_2\}$ or $\{v_1,v_2\} \Ra v_3$.

One could then ask whether a tournament that is strongly connected and which has no nontrivial homogeneous sets that contain cyclic triangles is triangle-connected. The answer is positive, and follows from the next theorem.

\begin{thm} \label{primeconnected}
If $G$ is a strongly connected prime tournament, then it is triangle-connected.
\end{thm}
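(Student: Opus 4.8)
The plan is to argue by strong induction on $|V(G)|$, built on a single local observation I will call the \emph{crossing move}: if a vertex $v$ has both an out-neighbor and an in-neighbor among the vertices of a cyclic triangle $C$, then travelling around $C$ there must be an edge $x \ra y$ with $v \ra x$ and $y \ra v$, so $\{v,x,y\}$ is a cyclic triangle sharing the edge $x \ra y$ with $C$, hence adjacent to $C$. Thus any vertex that is not homogeneous with respect to some triangle of a triangle-connected component is automatically swept into that component.

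First I would isolate the lemma that does most of the work: in \emph{any} tournament, if $\mc{C}$ is a triangle-connected component and $T = \bigcup_{C \in \mc{C}} V(C)$, then $T$ is a homogeneous set. By the crossing move, no vertex outside $T$ is mixed with respect to any $C \in \mc{C}$, so each outside vertex $v$ satisfies $v \Ra V(C)$ or $V(C) \Ra v$ for every $C \in \mc{C}$; propagating this ``side'' along a chain of adjacent triangles (which share two vertices at each step, forcing the side to persist) shows $v \Ra T$ or $T \Ra v$. Since $G$ is prime, every such $T$ is trivial, and as $|T| \ge 3$ we get $T = V(G)$: \emph{every} triangle-connected component already covers all the vertices. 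This disposes of the ``small'' components at once; the genuine difficulty is that several distinct components could a priori all cover $V(G)$.

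To handle that, I would induct. The cases where $G$ is $T_n$, $U_n$, or $W_n$ I would check directly from their explicit weave structure, exhibiting a chain of edge-sharing cyclic triangles through all the vertices. Otherwise $|V(G)| \ge 6$ and $G$ is none of these, so Theorem \ref{critical} yields a vertex $z$ with $G' = G - z$ prime; then $G'$ is strongly connected on $\ge 5$ vertices, so by the inductive hypothesis $G'$ is triangle-connected, and (every vertex of a strongly connected tournament lying on a $3$-cycle) its unique component $\mc{K}$ covers $V(G) \minus \{z\}$. Let $\mc{K}^+$ be the component of $G$ containing $\mc{K}$. By the lemma its vertex set is homogeneous, hence all of $V(G)$, so $\mc{K}^+$ contains a cyclic triangle through $z$.

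The crux is to rule out a second component $\mc{D} \neq \mc{K}^+$. Any triangle of $\mc{D}$ avoiding $z$ would be a triangle of $G'$, hence already in $\mc{K} \subseteq \mc{K}^+$; so every triangle of $\mc{D}$ passes through $z$. I would then pass to the bipartite graph $H_z$ on $A_G(z) \cup B_G(z)$ whose edges join $a \in A_G(z)$ to $b \in B_G(z)$ exactly when $a \ra b$: triangles through $z$ correspond to edges of $H_z$, and two of them are adjacent precisely when the corresponding (distinct) edges meet. Since $\mc{D}$ uses only $z$-triangles, its triangles form a connected subgraph of $H_z$ whose endpoints, by the lemma, exhaust $V(G) \minus \{z\}$; hence $H_z$ is connected. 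But the $z$-triangle of $\mc{K}^+$ is then another edge of the connected $H_z$, so it is triangle-connected to $\mc{D}$, forcing $\mc{K}^+ = \mc{D}$, a contradiction. I expect the main obstacle to be exactly this bookkeeping with $H_z$ (and the analogous direct verification for $T_n$, $U_n$, $W_n$), together with the subtle point in the lemma that the side-propagation must be valid across triangles overlapping in only a single vertex — which is precisely what the crossing move rules out for outside vertices.
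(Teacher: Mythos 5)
Your proof is correct, and its skeleton is the same as the paper's: verify $T_n$, $U_n$, $W_n$ directly from the weave structure, then induct by deleting a vertex $z$ with $G-z$ prime (via Theorem \ref{critical}), and analyze the triangles through $z$ via the bipartite graph on $A_G(z)\cup B_G(z)$ (the paper's $\hat H$, your $H_z$). The genuine difference is organizational: you isolate a single reusable lemma — in \emph{any} tournament, the union of the vertex sets of a triangle-connected component is a homogeneous set — whereas the paper proves two ad hoc instances of it. Its Claim 1 shows $\{u\}\cup A^\prime\cup B^\prime$ is homogeneous for the component of a ``bad'' triangle through the deleted vertex, and its Claim 2 shows that if no triangle through the deleted vertex is adjacent to a triangle of $G-z$ then $V(G-z)$ is homogeneous; your ``crossing move'' is exactly the paper's observation that $u\in V_x(C)$ yields an adjacent triangle, so $u\in Z(C)$ for every triangle $C$ otherwise. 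Your lemma buys a cleaner endgame: rather than playing the two claims against each other, you note that a hypothetical second component $\mc{D}$ consists only of $z$-triangles, so by the lemma its edge set is a connected spanning subgraph of $H_z$, forcing $H_z$ to be connected and hence forcing the known $z$-triangle of the main component into $\mc{D}$. Both routes rely on the same small facts (every vertex of a strongly connected tournament with at least three vertices lies on a cyclic triangle; adjacency of $z$-triangles is incidence of distinct edges of $H_z$). Two minor remarks: your $T_n/U_n/W_n$ step is only asserted, though the verification is routine and matches the paper's; and your closing worry about ``propagation across triangles overlapping in a single vertex'' is moot, since adjacent triangles share exactly two vertices by definition, which is all the propagation ever uses.
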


\begin{cor} \label{connected}
A strongly connected tournament is triangle-connected if and only if it has no nontrivial homogeneous sets that contain cyclic triangles.
\end{cor}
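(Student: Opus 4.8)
The plan is to show that within a strongly connected prime tournament $G$, any two cyclic triangles are triangle-connected. The natural approach is to define an equivalence relation on the cyclic triangles of $G$ by triangle-connectivity and show there is only one equivalence class. I would do this by a ``spreading'' argument: starting from one cyclic triangle, I show that the set of vertices that lie on some triangle triangle-connected to it must be all of $V(G)$, exploiting the fact that $G$ has no nontrivial homogeneous set.

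Concretely, fix a cyclic triangle $C_0$ and let $S$ denote the union of vertex sets of all cyclic triangles triangle-connected to $C_0$. The first step is to argue $|S| \ge 3$ and to understand how $S$ grows: if $v \in V(G) \minus S$ and $v$ attaches to the current configuration in a way that creates a new cyclic triangle sharing an edge with an existing one, then $v$ gets pulled into $S$. The key is to translate ``no nontrivial homogeneous set'' into a statement forcing such an attachment. I would try to show that if $S \ne V(G)$ and $S \ne \varnothing$, then $S$ itself (or a closely related set) would have to be a homogeneous set: for a vertex $v \notin S$ to fail to extend any triangle of $S$, it must dominate or be dominated uniformly by the relevant pairs, and primeness should forbid this unless $S$ is trivial or everything. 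Here I expect to lean on the weave machinery from the proof of Theorem~\ref{grow} (as the introduction flags that this proof uses weaves) and on the Ehrenfeucht--Rozenberg analysis in Proposition~\ref{ERlemma}, since a vertex that fails to create a suitable triangle relative to a fixed triangle $H$ lands in $Z(H)$ or some $V_x(H)$, which are exactly the ``homogeneous-like'' positions.

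The likely strategy in detail: I first establish a local lemma saying that if $H$ is a cyclic triangle and $v \notin V(H)$ with $H+v$ prime (i.e., $v \in \Ext(H)$), then $v$ lies on a cyclic triangle adjacent to $H$; conversely, if $v \in Z(H) \cup \bigcup_x V_x(H)$, then $v$ attaches ``transitively'' and does not immediately join. Then I would use Corollary~\ref{grow2} and the growing theorem to build an increasing chain of prime subtournaments, checking at each growth step that the newly added vertices are connected via shared edges to the triangles already accounted for, so that triangle-connectivity is preserved as the subtournament grows up to all of $G$. Since $G$ is prime and strongly connected with at least three vertices, it contains a cyclic triangle, and the chain terminates at $G$ itself.

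The main obstacle, I expect, is the bookkeeping in the growth step: when the prime subtournament grows by a vertex (or two, via Corollary~\ref{grow2}), I must verify that every cyclic triangle through the new vertex is triangle-connected to the old ones, and this requires a careful case analysis of how the new vertex's in/out pattern interacts with existing triangles. The cleanest route is probably to avoid growth entirely and instead prove directly that the ``triangle-connected closure'' of any triangle is a homogeneous set unless it is everything, which reduces the whole theorem to a single application of primeness; making that closure genuinely homogeneous (handling vertices that neither dominate nor are dominated by the closure yet still fail to create a connecting triangle) is the delicate point.

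**Corollary~\ref{connected}.** The corollary then follows quickly. For the forward direction, the discussion preceding the theorem already shows that a nontrivial homogeneous set $X$ containing a cyclic triangle blocks triangle-connectivity, so a triangle-connected tournament can have no such set. For the reverse direction, suppose $G$ is strongly connected with no nontrivial homogeneous set containing a cyclic triangle. Write $G = G'(H_1,\ldots,H_n)$ with $G'$ prime. Every cyclic triangle of $G$ either lies inside a single block $V(H_i)$ or uses vertices from at least two blocks; a triangle inside $V(H_i)$ would make $V(H_i)$ a nontrivial homogeneous set containing a cyclic triangle (when $|V(H_i)| \ge 3$), which is excluded, so each $H_i$ with a triangle is forbidden, forcing every triangle to project to a genuine cyclic triangle of the prime quotient $G'$. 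I would apply Theorem~\ref{primeconnected} to $G'$ to connect these projected triangles, then lift the connecting sequence back to $G$ by choosing representative vertices in the blocks, using that shared edges in $G'$ lift to shared edges between the lifted triangles. This yields a triangle-connecting sequence for any two cyclic triangles of $G$, completing the proof.
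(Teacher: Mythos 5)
Your derivation of Corollary \ref{connected} from Theorem \ref{primeconnected} is correct and essentially identical to the paper's own argument: the forward direction uses the blocking observation (together with strong connectivity to produce a cyclic triangle through a vertex outside the homogeneous set), and the reverse direction writes $G$ as $G^\prime(H_1,\ldots,H_n)$ with $G^\prime$ prime, observes that no block contains a cyclic triangle so every cyclic triangle meets three distinct blocks, and transfers connectivity from the prime quotient by lifting (the paper phrases the lifting as ``all $\{i,j,k\}$-triangles are triangle-connected to each other,'' which is the same representative-swapping step you leave implicit). The lengthy preliminary sketch of Theorem \ref{primeconnected} itself is not needed for the corollary --- the paper likewise proves the corollary by simply invoking the theorem --- so its vagueness does not affect the correctness of your proof of the stated result.
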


To see that the corollary follows from the theorem, let $G$ be a strongly connected tournament. First, suppose that $X \subseteq V(G)$ is a nontrivial homogeneous set of $G$ which contains a cyclic triangle $C$. Since $X$ is nontrivial, let $v \in V(G) \minus X$. Since $G$ is strongly connected, $v$ is a vertex of some cyclic triangle $C^\prime$ in $G$. Then as noted before, $C$ and $C^\prime$ are not triangle-connected to each other, so $G$ is not triangle-connected. This proves one direction of the corollary.

For the other direction, suppose $G$ has no nontrivial homogeneous sets that contain cyclic triangles. If $G$ has one vertex then it is trivially triangle-connected, so assume $|V(G)| \ge 2$. Write $G$ as $G^\prime(H_1,\ldots,H_n)$ where $G^\prime$ is prime (and strongly connected, since $G$ is). Since each $V(H_i)$ is a homogneneous set of $G$, none of the $H_i$ contain a cyclic triangle. Hence, every cyclic triangle of $G$ has vertices $\{v_i,v_j,v_k\}$ for some distinct $i,j,k$ such that $v_i \in V(H_i)$, $v_j \in V(H_j)$, and $v_k \in V(H_k)$. Call a cyclic triangle a $\{i,j,k\}$-triangle if it has vertices in $V(H_i)$, $V(H_j)$, and $V(H_k)$. It is easy to see that for each set of distinct $i,j,k$, all $\{i,j,k\}$-triangles are triangle-connected to each other. Also, since $G^\prime$ is prime and strongly connected, by Theorem \ref{primeconnected}, every $\{i,j,k\}$-triangle is connected to a $\{i^\prime,j^\prime,k^\prime\}$-triangle for every $\{i^\prime,j^\prime,k^\prime\}$ for which such a triangle exists. Thus, every cyclic triangle of $G$ is triangle-connected to each other, completing the proof of the corollary.

\subsection{Proof of Theorem \ref{primeconnected}}

To prove the theorem, we use Theorem \ref{grow} (or Theorem \ref{critical}) and induction. Because Theorem \ref{grow} does not hold for $T_n$, $U_n$, or $W_n$, we will treat these cases separately.

\begin{proof}[Proof for $T_n$, $U_n$, and $W_n$]
Let $G$ be $T_n$, $U_n$, or $W_n$ for some odd $n \ge 1$. We prove that $G$ is triangle-connected. The case $n = 1$ is trivial, so assume $n \ge 3$. As in the proof of Theorem \ref{grow}, write $G$ as $W+v$, where $W = \langle w_1, \ldots, w_{n-1} \rangle$ is a weave and $v$ is as in Proposition \ref{weave}.

It suffices to prove that every cyclic triangle in $G$ is triangle-connected to the cyclic triangle with vertices $\{v,w_1,w_{n-1}\}$. Given distinct vertices $x,y,z \in V(G)$, let $C(x,y,z)$ be the triangle with vertices $\{x,y,z\}$ (the triangle may or may not be cyclic). Every cyclic triangle in $G$ is of one of the following forms:

\renewcommand{\labelenumi}{(\alph{enumi})}
\begin{enumerate}
\item $C(v,w_i,w_j)$, where $i < j$ and $i$ is odd and $j$ is even. (All such triangles are cyclic.)
\item $C(w_i,w_j,w_k)$ where $i < j < k$ and $j$ has opposite parity from $i$ and $k$. (Such a triangle might not be cyclic depending on the type of weave.)
\end{enumerate}

Let $C$ be a cyclic triangle in $G$. If $C = C(v,w_i,w_j)$ as in (a), we have that $C$ is adjacent to or the same as the cyclic triangle $C(v,w_1,w_j)$, and this triangle is adjacent to or the same as $C(v,w_1,w_{n-1})$. Thus, $C$ is triangle-connected to $C(v,w_1,w_{n-1})$, as desired.

Now suppose $C = C(w_i,w_j,w_k)$ as in (b). Then one of the triangles $C(v,w_i,w_j)$, $C(v,w_j,w_k)$ is of type (a), and hence this triangle is cyclic and, as above, it is triangle-connected to $C(v,w_1,w_{n-1})$. Since $C$ is adjacent to this cyclic triangle, $C$ is also triangle-connected to $C(v,w_1,w_{n-1})$, as desired. This completes the proof for $T_n$, $U_n$, and $W_n$.

\end{proof}

Now, let $G$ be strongly connected and prime. We will prove that $G$ is triangle-connected by induction on $|V(G)|$. All base cases $|V(G)| \le 5$ were covered in the proof for $T_n$, $U_n$, and $W_n$. Suppose $|V(G)| \ge 6$, and that the thoerem holds for tournaments with $|V(G)| - 1$ vertices. If $G$ is $T_n$, $U_n$, or $W_n$ for any $n$, then from the above argument we are done. Assume $G$ is not one of these tournaments. By Theorem \ref{grow} (or Theorem \ref{critical}), there is a prime subtournament $H$ of $G$ with $|V(G)| - 1$ vertices. Let $\{u\} = V(G) \minus V(H)$. Since $H$ is triangle-connected by the inductive hypothesis, to show that $G$ is triangle-connected it suffices to show that every cyclic triangle of $G$ with $u$ as a vertex is triangle-connected to a cyclic triangle in $H$.

Suppose the contrary. Let $\mc{C}$ denote the set of cyclic triangles of $G$ that have $u$ as a vertex. Let $C = C(u,v_A,v_B)$ be a cyclic triangle in $\mc{C}$ that is not triangle-connected to a cyclic triangle in $H$, where $u \ra v_A \ra v_B \ra u$. We will prove the following two claims:

\setcounter{claim}{0}
\begin{claim}
$C$ is triangle-connected to every cyclic triangle in $\mc{C}$.
\end{claim}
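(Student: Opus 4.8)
The plan is to reduce triangle-connectivity within $\mc{C}$ to the connectivity of an auxiliary structure on the neighbors of $u$, and then extract a homogeneous set. Every cyclic triangle in $\mc{C}$ has the form $C(u,x,y)$ with $u \ra x \ra y \ra u$, so $x \in A_G(u)$ and $y \in B_G(u)$ with $x \ra y$; two such triangles are adjacent exactly when they share $u$ together with one of their remaining vertices. I would let $\mc{C}_C$ be the set of triangles in $\mc{C}$ that are triangle-connected to $C$, and let $A_C \subseteq A_G(u)$ and $B_C \subseteq B_G(u)$ be the out- and in-neighbors of $u$ occurring in the triangles of $\mc{C}_C$. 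The goal then becomes showing $\mc{C}_C = \mc{C}$.

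The key leverage is the standing assumption that $C$ is not triangle-connected to any cyclic triangle of $H$. First I would record its consequence: if $C(u,x,y) \in \mc{C}_C$ and $\{x,y,z\}$ is a cyclic triangle of $G$ containing the edge $x \ra y$, then since $z \neq u$ this triangle lies in $H$ and shares exactly the two vertices $x,y$ with $C(u,x,y)$, so it is adjacent to a triangle of $\mc{C}_C$ and hence triangle-connected to $C$, a contradiction. So no edge $x \ra y$ coming from a triangle of $\mc{C}_C$ lies in a cyclic triangle of $H$.

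I would then set $S = \{u\} \cup A_C \cup B_C$ and prove $S$ is a homogeneous set of $G$. For $w \in A_G(u) \minus A_C$: if $w \ra y$ for some $y \in B_C$, then $C(u,w,y)$ is cyclic and shares $y$ with a triangle of $\mc{C}_C$, forcing $w \in A_C$, a contradiction; hence $B_C \Ra w$. If moreover $w \ra x$ for some $x \in A_C$, then choosing $y_0 \in B_C$ with $x \ra y_0$ (which exists since $x \in A_C$) yields a cyclic triangle $w \ra x \ra y_0 \ra w$ in $H$ containing the edge $x \ra y_0$ of $\mc{C}_C$, contradicting the previous paragraph; hence $A_C \Ra w$. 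Together with $u \ra w$ this gives $S \Ra w$. The symmetric argument for $w \in B_G(u) \minus B_C$ gives $w \Ra S$. Since $V(G) = \{u\} \cup A_G(u) \cup B_G(u)$, every vertex outside $S$ is covered, so $S$ is homogeneous.

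Finally, $S$ contains $C$, so $|S| \ge 3$, and by primeness $S$ must be trivial, forcing $S = V(G)$; thus $A_C = A_G(u)$ and $B_C = B_G(u)$. Any $C(u,x,y) \in \mc{C}$ then has $x \in A_C$, so it shares $x$ with a triangle of $\mc{C}_C$ and therefore lies in $\mc{C}_C$, giving $\mc{C}_C = \mc{C}$, which is the claim. I expect the main obstacle to be verifying homogeneity of $S$ cleanly, in particular ruling out the ``wrong-direction'' arcs between $w$ and $A_C$ (respectively $B_C$); this is exactly the point where the hypothesis that $C$ is not triangle-connected to $H$ must be invoked, via the manufactured cyclic triangle inside $H$.
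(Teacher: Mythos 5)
Your proposal is correct and follows essentially the same route as the paper: the paper encodes the triangles of $\mc{C}$ as $A$-to-$B$ edges of an auxiliary undirected graph and takes the connected component of $C$, which is exactly your $\mc{C}_C$ with $A_C, B_C$ playing the roles of $A', B'$, and it then shows $\{u\} \cup A' \cup B'$ is homogeneous by the same two-step argument (wrong-direction arcs into $B_C$ force membership in $\mc{C}_C$; wrong-direction arcs into $A_C$ manufacture a cyclic triangle of $H$ adjacent to one in $\mc{C}_C$, contradicting the standing hypothesis) before invoking primeness. No substantive differences.
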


\begin{claim}
There is a cyclic triangle in $\mc{C}$ which is triangle-connected to a cyclic triangle in $H$.
\end{claim}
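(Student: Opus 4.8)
The plan is to reduce Claim 2 to a statement about $H$ alone and then contradict the triangle-connectivity of $H$. Write $A = A_G(u)$ and $B = B_G(u)$. Since $G$ is strongly connected, $u$ is neither a source nor a sink, for otherwise $V(H)$ would be a nontrivial homogeneous set of $G$; hence $A$ and $B$ are nonempty and partition $V(H)$. Every cyclic triangle in $\mc{C}$ has the form $\{u,a,b\}$ with $a \in A$, $b \in B$, and $a \ra b$. The key observation is that the only edge such a triangle can share with a triangle of $H$ is $\{a,b\}$ (as $u \notin V(H)$), so $\{u,a,b\}$ is adjacent to a cyclic triangle of $H$ exactly when the arc $a \ra b$ lies on a cyclic triangle $\{a,b,c\}$ of $H$. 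Thus it suffices to exhibit a single arc directed from $A$ to $B$ that lies on a cyclic triangle of $H$: the corresponding triangle $\{u,a,b\} \in \mc{C}$ is then adjacent to, and in particular triangle-connected to, a cyclic triangle of $H$.

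Suppose, for contradiction, that no arc directed from $A$ to $B$ lies on a cyclic triangle of $H$. A short case analysis (according to whether the triangle has two vertices in $A$ or two in $B$) shows that any cyclic triangle of $H$ meeting both $A$ and $B$ contains at least one arc directed from $A$ to $B$. Hence, under our assumption, every cyclic triangle of $H$ lies entirely within $A$ or entirely within $B$.

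I would now invoke the inductive hypothesis: $H$ is a strongly connected prime tournament on $|V(G)| - 1 \ge 5$ vertices, so it is triangle-connected. Using the elementary fact that every vertex of a strongly connected tournament lies on a cyclic triangle --- if some vertex $v$ did not, then its in-neighbors would beat all of its out-neighbors, so $v$ would be unreachable from its out-neighbors, contradicting strong connectivity --- and using $A, B \neq \varnothing$, there exist a cyclic triangle contained in $A$ and a cyclic triangle contained in $B$. But such triangles are vertex-disjoint and hence never adjacent; moreover, since every cyclic triangle of $H$ lies wholly in $A$ or wholly in $B$, any chain of pairwise-adjacent cyclic triangles beginning inside $A$ must remain inside $A$ (consecutive triangles share two vertices, so at least two vertices stay in $A$ at each step). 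Therefore no chain of adjacent cyclic triangles joins the $A$-triangle to the $B$-triangle, contradicting the triangle-connectivity of $H$.

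This contradiction shows that some arc from $A$ to $B$ does lie on a cyclic triangle of $H$, which proves Claim 2. The crux of the argument is the initial reduction together with the dichotomy it produces: once one sees that a $u$-triangle can reach $H$ only through an arc directed from $A$ to $B$, forbidding such arcs on cyclic triangles splits the cyclic triangles of $H$ into two non-communicating families living in $A$ and in $B$, which the inductive triangle-connectivity of $H$ rules out. Everything else is routine; it is worth noting that this proof of Claim 2 is self-contained and does not use Claim 1.
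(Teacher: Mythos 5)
Your proof is correct and follows essentially the same route as the paper: both arguments show that if no triangle through $u$ is adjacent to a triangle of $H$, then every cyclic triangle of $H$ lies entirely in $A_G(u)$ or entirely in $B_G(u)$, and then combine the inductive triangle-connectivity of $H$ with the fact that every vertex of the strongly connected $H$ lies on a cyclic triangle to reach a contradiction. The only cosmetic difference is that you phrase the dichotomy directly via arcs from $A$ to $B$ rather than via the decomposability of four-vertex tournaments and the $Z(\cdot)$, $V_x(\cdot)$ machinery, and you land the contradiction on triangle-connectivity of $H$ rather than on $V(H)$ being a homogeneous set of the prime tournament $G$.
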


These two claims clearly contradict the definition of $C$, which will complete the proof.

\begin{proof}[Proof of Claim 1]
This proof is due to P. Seymour. Let $A = A_G(u)$ and $B = B_G(u)$, so $v_A \in A$, $v_B \in B$, and $A \cup B = V(H)$. We associate each cyclic triangle $C(u,x_A,x_B)$ in $\mc{C}$, where $x_A \in A$ and $x_B \in B$, with the directed edge $x_Ax_B$; this gives a one-to-one correspondence between the triangles in $\mc{C}$ and the edges from $A$ to $B$. Let $\hat{H}$ be the undirected graph with vertices $V(H)$ and an undirected edge $xy$ for each directed edge $xy$ in $H$ with $x \in A$ and $y \in B$. Then two triangles in $\mc{C}$ are triangle-connected if and only if their associated edges in $\hat{H}$ are in the same connected component of $\hat{H}$.

Let $\hat{H}^\prime$ be the connected component of $\hat{H}$ containing $v_Av_B$, and let $A^\prime = A \cap V(\hat{H}^\prime)$ and $B^\prime = B \cap V(\hat{H}^\prime)$. Suppose there is a vertex $v \in A$ which is not in $A^\prime$. We claim that $\{u\} \cup A^\prime \cup B^\prime \Ra v$. Because $v$ is not adjacent in $\hat{H}$ to any vertex in $B^\prime$, we must have $B^\prime \Ra v$ in the tournament $H$. Moreover, if there is a vertex $x_{A^\prime} \in A^\prime$ such that $v \ra x_{A^\prime}$, then $x_{A^\prime} \ra x_{B^\prime}$ for some $x_{B^\prime} \in B^\prime$, and $C(v,x_{A^\prime},x_{B^\prime})$ is a cyclic triangle in $H$. $C$ is triangle-connected to this triangle, which contradicts the fact that $C$ is not triangle-connected to a triangle in $H$. Thus, we must have $A^\prime \Ra v$. Finally, $u \ra v$ by the definition of $A$, so altogether we have $\{u\} \cup A^\prime \cup B^\prime \Ra v$, as claimed. Similarly, if $v \in B$ and $v \notin B^\prime$, we have $v \Ra \{u\} \cup A^\prime \cup B^\prime$. It follows that $\{u\} \cup A^\prime \cup B^\prime$ is a homogeneous set of $H$. Since this set contains $\{u,v_A,v_B\}$ and $G$ is prime, we must have $\{u\} \cup A^\prime \cup B^\prime = V(G)$, and hence the connected component $\hat{H}^\prime$ is all of $\hat{H}$. Thus, $C$ is triangle-connected to every triangle in $\mc{C}$, as desired.
\end{proof}

\begin{proof}[Proof of Claim 2]
Suppose that no cyclic triangle in $\mc{C}$ is triangle-connected to a cyclic triangle in $H$. Since every four-vertex tournament is decomposable, for every cyclic triangle $C(x,y,z)$ in $H$, we have that either $u \in Z(C(x,y,z))$ or $u \in V_v(C(x,y,z))$ for some $v \in \{x,y,z\}$. If the latter case, say $u \in V_x(C(x,y,z))$, then $C(u,y,z)$ is a cyclic triangle, and it is adjacent to $C(x,y,z)$, contradicting our original assumption. Thus, $u \in Z(C^\prime)$ for every cyclic triangle $C^\prime$ in $H$, and hence for every such cyclic triangle either $u \Ra C^\prime$ or $C^\prime \Ra u$. However, by the inductive hypothesis, any two cyclic triangles in $H$ can be connected to each other by a sequence of adjacent cyclic triangles, so in fact we have that either $u \Ra C^\prime$ for every cyclic triangle $C^\prime$ in $H$ or $C^\prime \Ra u$ for every cyclic triangle $C^\prime$ in $H$. Since $H$ is strongly connected, every vertex of $H$ belongs to a cyclic triangle of $H$. Thus, $V(H)$ is a homogeneous set of $G$, contradicting the fact that $G$ is prime. This completes the proof.
\end{proof}

\section{Matching tournaments}

\subsection{Matching orderings} \label{matchingorderings}

We now focus on a somewhat different topic. Given a tournament $G$ and an ordering $v_1,\ldots,v_n$ of its vertices, an edge $v_jv_i$ with $j > i$ is called a \emph{backedge}. A \emph{matching ordering} of a tournament $G$ is an ordering of its vertices such that every vertex is the head or tail of at most one backedge. A tournament with at least one matching ordering is called a \emph{matching tournament}. In Subsections 5.1 through 5.4, we will deal with the question of how many matching orderings a matching tournament can have. Subsection 5.5 will deal with minimal non-matching tournaments.

In general, a matching tournament can have many matching orderings. For example, $I_n$ has at least $2^{n/2}$ matching orderings: Let $v_1,\ldots,v_{n}$ be the standard ordering of $V(I_n)$. Let $S_n$ denote the symmetric group on $\{1,2,\ldots,n\}$, and let $\tau_i$ denote the transposition $(i \ps i+1)$. Then $v_{\pi(1)},\ldots,v_{\pi(n)}$ is a matching ordering of $I$ for all permutations $\pi \in S_{n}$ of the form $\pi = \tau_1^{e_1}\tau_3^{e_3}\tau_5^{e_5}\cdots\tau_{2\lfloor n/2 \rfloor - 1}^{e_{2\lfloor n/2 \rfloor - 1}}$ and $\pi = \tau_2^{e_2}\tau_4^{e_4}\tau_6^{e_6}\cdots\tau_{2\lfloor n/2 \rfloor - 2}^{e_{2\lfloor n/2 \rfloor - 2}}$, where $e_i = 0$ or 1 for all $i$.

An example of an infinite family of prime tournaments with more than one matching ordering is as follows: For $n \ge 1$, let $P_n$ be the tournament with vertices $v_1, \ldots, v_n$ such that $v_i \ra v_j$ if $j - i \ge 2$, and $v_{i+1} \ra v_i$ for all $1 \le i \le n-1$. For all $n \neq 4$, $P_n$ is prime. (Note that $P_2$ is $I_2$, $P_3$ is the cyclic triangle, and $P_5$ is $W_5$.) Let $\pi_1 = \tau_1 \tau_3 \ldots \tau_{2\lfloor n/2 \rfloor - 1}$ and $\pi_2 = \tau_2 \tau_4 \ldots \tau_{2\lfloor n/2 \rfloor - 2}$, where $\tau_i = (i \ps i+1)$ as before. Then $v_{\pi_1(1)},\ldots,v_{\pi_1(n)}$ and $v_{\pi_2(1)},\ldots,v_{\pi_2(n)}$ are both matching orderings of $P_n$, and these orderings are distinct if $n > 1$. Below we draw these two matching orderings for both odd and even examples of $n$; only backedges are shown.

\begin{figure}[htbp]
\centering
\subfloat[$v_{\pi_1(1)},\ldots,v_{\pi_1(9)}$]{
\begin{tikzpicture}[scale=0.75]
\node[vertex] (v1) at (1,0) {};
\node[vertex] (v2) at (2,0) {};
\node[vertex] (v3) at (3,0) {};
\node[vertex] (v4) at (4,0) {};
\node[vertex] (v5) at (5,0) {};
\node[vertex] (v6) at (6,0) {};
\node[vertex] (v7) at (7,0) {};
\node[vertex] (v8) at (8,0) {};
\node[vertex] (v9) at (9,0) {};
\draw[arc] (v4) to (v1);
\draw[arc] (v6) to (v3);
\draw[arc] (v8) to (v5);
\draw[arc] (v9) to (v7);
\end{tikzpicture}
} \hspace{.75cm}
\subfloat[$v_{\pi_1(1)},\ldots,v_{\pi_1(10)}$]{
\begin{tikzpicture}[scale=0.75]
\node[vertex] (v1) at (1,0) {};
\node[vertex] (v2) at (2,0) {};
\node[vertex] (v3) at (3,0) {};
\node[vertex] (v4) at (4,0) {};
\node[vertex] (v5) at (5,0) {};
\node[vertex] (v6) at (6,0) {};
\node[vertex] (v7) at (7,0) {};
\node[vertex] (v8) at (8,0) {};
\node[vertex] (v9) at (9,0) {};
\node[vertex] (v10) at (10,0) {};
\draw[arc] (v4) to (v1);
\draw[arc] (v6) to (v3);
\draw[arc] (v8) to (v5);
\draw[arc] (v10) to (v7);
\end{tikzpicture}
} \vspace{0.75cm}
\subfloat[$v_{\pi_2(1)},\ldots,v_{\pi_2(9)}$]{
\begin{tikzpicture}[scale=0.75]
\node[vertex] (v1) at (1,0) {};
\node[vertex] (v2) at (2,0) {};
\node[vertex] (v3) at (3,0) {};
\node[vertex] (v4) at (4,0) {};
\node[vertex] (v5) at (5,0) {};
\node[vertex] (v6) at (6,0) {};
\node[vertex] (v7) at (7,0) {};
\node[vertex] (v8) at (8,0) {};
\node[vertex] (v9) at (9,0) {};
\draw[arc] (v3) to (v1);
\draw[arc] (v5) to (v2);
\draw[arc] (v7) to (v4);
\draw[arc] (v9) to (v6);
\end{tikzpicture}
} \hspace{.75cm}
\subfloat[$v_{\pi_2(1)},\ldots,v_{\pi_2(10)}$]{
\begin{tikzpicture}[scale=0.75]
\node[vertex] (v1) at (1,0) {};
\node[vertex] (v2) at (2,0) {};
\node[vertex] (v3) at (3,0) {};
\node[vertex] (v4) at (4,0) {};
\node[vertex] (v5) at (5,0) {};
\node[vertex] (v6) at (6,0) {};
\node[vertex] (v7) at (7,0) {};
\node[vertex] (v8) at (8,0) {};
\node[vertex] (v9) at (9,0) {};
\node[vertex] (v10) at (10,0) {};
\draw[arc] (v3) to (v1);
\draw[arc] (v5) to (v2);
\draw[arc] (v7) to (v4);
\draw[arc] (v9) to (v6);
\draw[arc] (v10) to (v8);
\end{tikzpicture}
}
\end{figure}

Despite these examples, having more than one matching ordering is in fact a very strict condition. One reason for this is a simple consideration of vertex degrees, which gives the following fact.

\begin{prop} \label{degrees}
Let $v_1,\ldots,v_n$ be a matching ordering of $G$. If $v$ is a vertex of $G$ with indegree $b$, then $v$ is either $v_b$, $v_{b+1}$, or $v_{b+2}$. Moreover, $v = v_b$ if and only if it is the head of a backedge, $v = v_{b+2}$ if and only if it is the tail of a backedge, and $v = v_{b+1}$ if and only if it is not an end of a backedge.
\end{prop}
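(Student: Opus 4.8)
The plan is to fix the position of $v$ in the ordering and compute its indegree by a direct counting argument. Write $v = v_k$, so that there are $k-1$ vertices before $v$ and $n-k$ vertices after $v$. First I would record how the ordering constrains the direction of each edge at $v$: for a vertex $v_i$ with $i < k$, the edge between $v_i$ and $v$ is a backedge exactly when $v \ra v_i$, and is $v_i \ra v$ otherwise; for a vertex $v_j$ with $j > k$, the edge between $v$ and $v_j$ is a backedge exactly when $v_j \ra v$, and is $v \ra v_j$ otherwise. Hence, among the $k-1$ vertices preceding $v$, precisely those to which $v$ sends a backedge fail to be inneighbors of $v$, and among the vertices following $v$, precisely those that send a backedge to $v$ are inneighbors of $v$.

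Letting $t$ be the number of backedges with tail $v$ and $h$ the number with head $v$, this bookkeeping yields the formula $b = (k-1) - t + h$ for the indegree of $v$. The matching ordering condition says that $v$ is an end of at most one backedge, that is $t + h \le 1$, so the pair $(t,h)$ is one of $(0,0)$, $(1,0)$, or $(0,1)$. Substituting each possibility into the formula gives $k = b+1$, $k = b+2$, or $k = b$ respectively, which shows that $v$ is one of $v_b$, $v_{b+1}$, or $v_{b+2}$.

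For the three biconditionals, I would observe that the cases $(t,h) = (0,1)$, $(1,0)$, $(0,0)$ are exactly ``$v$ is the head of a backedge,'' ``$v$ is the tail of a backedge,'' and ``$v$ is not an end of any backedge,'' and that under $t+h \le 1$ these three alternatives are mutually exclusive and exhaustive. Since they produce the three \emph{distinct} positions $b$, $b+2$, and $b+1$, each position forces its corresponding case, delivering all the ``if and only if'' statements simultaneously. The argument is a routine count, so I do not expect a genuine obstacle; the only points requiring care are fixing the convention that a backedge $v_jv_i$ has $j>i$ (so it points from the later vertex to the earlier one) and checking that $b$, $b+1$, $b+2$ are genuinely distinct, which is what lets the position determine the case.
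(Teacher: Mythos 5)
Your argument is correct and is essentially the paper's own proof: both compute the indegree of $v = v_k$ as $(k-1) - t + h$ and use the matching condition $t+h \le 1$ to read off the three cases. The extra care you take with the biconditionals (noting the three resulting positions are distinct) is implicit in the paper but harmless to make explicit.
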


\begin{proof}
Since $v_i \ra v_j$ for each $i < j$ except for backedges $v_jv_i$, and every vertex is the end of at most one backedge, we have $|B_G(v_i)| = i$ if $v_i$ is the head of a backedge, $|B_G(v_i)| = i-2$ if $v_i$ is the tail of a backedge, and $|B_G(v_i)| = i-1$ if $v_i$ is not an end of a backedge.
\end{proof}

We will show that, in a sense, $P_n$ is the only ``fundamental'' example of a tournament with more than one matching ordering. More specifically, we will show that the only reason a tournament might have more than one matching ordering is that it has a homogeneous set on which the induced subtournament is $P_n$ for $n > 1$.

\subsection{Statement and proof of theorem}

Before stating the exact theorem, we introduce several definitions and facts. 
Let $X = \{b, b+1, \ldots, a\}$ be a nonempty set of consecutive integers. Define $\sigma_X$ to be the permutation on $X$ as follows.
\begin{itemize}
\item If $|X|$ is odd, $\sigma_X = (b \ps b+2 \ps b+4 \ps \ldots \ps a-2 \ps a \ps a-1 \ps a-3 \ps \ldots \ps b+1)$.
\item If $|X|$ is even, $\sigma_X = (b \ps b+2 \ps b+4 \ps \ldots \ps a-1 \ps a \ps a-2 \ps a-4 \ps \ldots \ps b+1)$.
\end{itemize}
(If $|X| = 1$, $\sigma_X$ is the identity, and if $|X| = 2$, $\sigma_X$ is a transposition.) 
Defining $\pi_1 = \tau_1 \tau_3 \ldots \tau_{2\lfloor n/2 \rfloor - 1}$ and $\pi_2 = \tau_2 \tau_4 \ldots \tau_{2\lfloor n/2 \rfloor - 2}$ as in the previous subsection, we also have that $\sigma_{\{1,\ldots,n\}} = \pi_1\pi_2$. 

If $X$ is as above and $|X| = 4$, define $\tau_X = (b \ps b+2) (b+1 \ps b+3)$. We have $\tau_{\{1,2,3,4\}} = \pi_2 \pi_1 \pi_2$, where we use $n=4$ in the definition of $\pi_1$ and $\pi_2$.

Finally, suppose $\pi \in S_n$. Given a transitive tournament $G$ with the standard ordering $v_1,\ldots,v_n$ of $V(G)$, let $\pi G$ denote the ordering $v_{\pi(1)},\ldots,v_{\pi(n)}$ of $V(G)$. Similarly, given a tournament $G$ isomorphic to $P_n$, if $n \neq 3$ then there is a unique ordering $v_1,\ldots,v_n$ of its vertices which satisfies the definition of $P_n$ given previously. Let $\pi G$ denote the ordering $v_{\pi(1)},\ldots,v_{\pi(n)}$ of $V(G)$. If $G$ is $P_3$, then there is no unique ordering satisfying the definition, so we will say that an ordering is $\pi G$ if it is $v_{\pi(1)},v_{\pi(2)},v_{\pi(3)}$ for \emph{some} ordering $v_1,v_2,v_3$ of $V(G)$ that satisfies the definition of $P_n$. If $|V(G)| = 2$, then $G$ is isomorphic to both $I_2$ and $P_2$ but the two above definitions give different orderings for $\pi G$, so in this case we let $\pi G$ denote any ordering of $V(G)$.

Note that $|\sigma_X(x) - x| \le 2$ and $|\tau_X(x) - x| \le 2$ for all $x \in X$. Also, as we noted in the previous subsection, if $G$ is transitive or isomorphic to $P_n$, then $\pi_1 G$ and $\pi_2 G$ are matching orderings of $G$. If $G$ is $P_4$, we can check that $\pi_2 \pi_1 G$ is also a matching ordering of $G$. The next two propositions can be thought of as converses to these facts.

\begin{prop} \label{Xcycle}
Let $\sigma$ be a permutation of $\{1,2,\ldots,n\}$ which can be written as a cycle $(x_1 \ps x_2 \ps \ldots \ps x_k)$, $k \ge 1$. If $|\sigma(x) - x| \le 2$ for all $x \in \{1,\ldots,n\}$, then either 
\begin{itemize}
\item $X = \{x_1,\ldots, x_k\}$ is a set of consecutive integers and $\sigma = \sigma_X$ or $\sigma_X^{-1}$, or
\item $k=2$ and $|x_1 - x_2| = 2$.
\end{itemize}
\end{prop}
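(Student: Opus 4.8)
The plan is to induct on the cycle length $k$, proving the stated dichotomy. The cases $k \le 2$ are immediate and also serve as base cases. A $1$-cycle is the identity on $X = \{x_1\}$, which is exactly $\sigma_X$. A $2$-cycle $(x_1 \ps x_2)$ has $|x_1 - x_2| \in \{1,2\}$ by the displacement bound, so either $|x_1-x_2|=1$, giving $X$ consecutive and $\sigma = \sigma_X$ (the transposition), or $|x_1-x_2|=2$, which is the second bullet. So assume $k \ge 3$. I will first show $X = \{x_1,\ldots,x_k\}$ is a set of consecutive integers, and then that $\sigma \in \{\sigma_X, \sigma_X^{-1}\}$.

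To prove $X$ is consecutive, suppose not, and write $b = \min X$, $a = \max X$. Then there is an integer $m$ with $b < m < a$ and $m \notin X$, so $\sigma(m) = m$. Split $X = A \sqcup B$ with $A = \{x \in X : x < m\}$ and $B = \{x \in X : x > m\}$; both are nonempty since $b \in A$ and $a \in B$. Since $\sigma$ restricts to a bijection of $X$, the number of $x \in A$ with $\sigma(x) \in B$ equals the number of $y \in B$ with $\sigma(y) \in A$ (inflow equals outflow across the partition). Because $\sigma$ is a single cycle of length $\ge 2$, the proper nonempty subset $A$ is not $\sigma$-invariant, so this common count is at least $1$. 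But any $x \in A$ with $\sigma(x) \in B$ satisfies $x \le m-1$ and $\sigma(x) \ge m+1$, whence $\sigma(x) - x \ge 2$; the displacement bound then forces $x = m-1$ and $\sigma(x) = m+1$. Hence there is exactly one such $x$, and symmetrically exactly one $y \in B$ crossing leftward, namely $y = m+1$ with $\sigma(m+1) = m-1$. Then $\{m-1,\,m+1\}$ is a $2$-cycle of $\sigma$, contradicting that $\sigma$ is a single cycle of length $k \ge 3$. Therefore $X = \{b,\ldots,a\}$ with $k = a-b+1$.

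Now I determine $\sigma$ itself. Since $b = \min X$ and $\sigma(b) \in X \minus \{b\}$ lies within distance $2$ of $b$, we have $\sigma(b) \in \{b+1, b+2\}$. Replacing $\sigma$ by $\sigma^{-1}$ preserves both hypotheses and interchanges $\sigma_X$ with $\sigma_X^{-1}$, so I may assume $\sigma(b) = b+2$ (the alternative $\sigma(b)=b+1$ is this case applied to $\sigma^{-1}$). As $\sigma^{-1}(b) \in \{b+1,b+2\}$ and $\sigma^{-1}(b) = b+2$ would produce the $2$-cycle $(b \ps b+2)$, we get $\sigma(b+1) = b$. I now splice $b$ out of the cycle: define $\sigma^\prime$ on $X^\prime = \{b+1,\ldots,a\}$ by $\sigma^\prime(b+1) = b+2$ and $\sigma^\prime(x) = \sigma(x)$ for $x \in \{b+2,\ldots,a\}$. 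One checks directly that $\sigma^\prime$ is a single $(k-1)$-cycle on the consecutive set $X^\prime$ with all displacements $\le 2$. By the induction hypothesis $\sigma^\prime \in \{\sigma_{X^\prime}, \sigma_{X^\prime}^{-1}\}$. Since $\sigma^\prime(b+1) = b+2 = \sigma_{X^\prime}^{-1}(b+1)$, while $\sigma_{X^\prime}(b+1) = b+3 \ne b+2$ when $|X^\prime| \ge 3$ (and $\sigma_{X^\prime} = \sigma_{X^\prime}^{-1}$ when $|X^\prime| = 2$, so the conclusion is immediate there), we get $\sigma^\prime = \sigma_{X^\prime}^{-1}$. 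Reinserting $b$ and comparing with the definition of $\sigma_X$ term by term---at $b$, at $b+1$, and on $\{b+2,\ldots,a\}$, where $\sigma_X$ and $\sigma_{X^\prime}^{-1}$ agree---gives $\sigma = \sigma_X$, completing the induction.

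The main obstacle is the consecutiveness step: this is exactly where the single-cycle hypothesis is essential, since without it $X$ need not be consecutive (as the $k=2$, distance-$2$ case already shows), and it is the inflow-equals-outflow count combined with the displacement bound that forces a forbidden short cycle at any gap. The remaining work is routine but requires care: verifying that $\sigma^\prime$ is a genuine $(k-1)$-cycle of displacement $\le 2$, and confirming that splicing $b$ back into $\sigma_{X^\prime}^{-1}$ reproduces $\sigma_X$ on the nose. The latter check must be carried out separately for the odd and even parities in the definition of $\sigma_X$, but in both cases it reduces to noting that $\sigma_X$ and $\sigma_{X^\prime}^{-1}$ have the same action on $\{b+2,\ldots,a\}$.
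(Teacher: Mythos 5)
Your proof is correct, but it takes a genuinely different route from the paper's. The paper works directly with the orbit of $x = \min X$: it observes that $\{\sigma(x),\sigma^{-1}(x)\} = \{x+1,x+2\}$ and then proves by induction on $j$ that $\sigma^{(-1)^j\lceil j/2\rceil}(x) = x+j$ for $0 \le j \le k-1$, using the displacement bound to cap each new value and the fact that $\sigma$ has order $k$ to rule out premature returns to $x$. That single induction delivers the consecutiveness of $X$ and the identification $\sigma = \sigma_X^{\pm 1}$ simultaneously, at the cost of some bookkeeping with alternating exponents. You instead split the work into two independent pieces: a counting argument across a hypothetical gap $m \notin X$ (matched crossings, each pinned by the displacement bound to $m-1 \mapsto m+1$ and $m+1 \mapsto m-1$, forcing a $2$-cycle inside a $k$-cycle with $k \ge 3$) to get consecutiveness, followed by an induction on $k$ that splices the minimum out of the cycle and appeals to the $(k-1)$-case. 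Your version is more modular and the gap argument is a clean self-contained observation, but it shifts the parity casework into the final verification that reinserting $b$ into $\sigma_{X'}^{-1}$ reproduces $\sigma_X$ --- a check you correctly reduce to the agreement of $\sigma_X$ and $\sigma_{X'}^{-1}$ on $\{b+2,\ldots,a\}$ and which does go through in both parities, though you leave it as a stated verification rather than writing it out. Both arguments ultimately rest on the same two facts, the displacement bound and the order-$k$ property of a single cycle, and I see no gap in yours.
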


\begin{prop} \label{Xorderings}
Let $G$ be a tournament with matching ordering $v_1,\ldots,v_n$, and let $X = \{1,\ldots,n\}$. Suppose $\sigma \in S_n$ such that $v_{\sigma(1)},\ldots,v_{\sigma(n)}$ is a matching ordering of $G$. Then the following hold.
\begin{itemize}
\item If $\sigma = \sigma_X$, then $G$ is either transitive or isomorphic to $P_n$, and the ordering $v_1,\ldots,v_n$ is $\pi_2 G$.
\item If $\sigma = \sigma_X^{-1}$, then $G$ is either transitive or isomorphic to $P_n$, and the ordering $v_1,\ldots,v_n$ is $\pi_1 G$.
\item If $n = 4$ and $\sigma = \tau_X$, then $G$ is isomorphic to $P_4$, and the ordering $v_1,\ldots,v_n$ is either $\pi_2 G$ or $\pi_2 \pi_1 G$.
\end{itemize}
\end{prop}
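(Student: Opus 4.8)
The plan is to exploit Proposition \ref{degrees}, which pins down the position of each vertex in a matching ordering to within one of three consecutive values determined by its indegree—a quantity intrinsic to $G$ and hence shared by both orderings. Write $b_i$ for the indegree of $v_i$. The hypothesis that $v_1,\ldots,v_n$ is a matching ordering gives $b_i \in \{i-2,\,i-1,\,i\}$, while the hypothesis that $v_{\sigma(1)},\ldots,v_{\sigma(n)}$ is a matching ordering gives, upon applying Proposition \ref{degrees} in the second ordering where $v_i$ occupies position $\sigma^{-1}(i)$, that $b_i \in \{\sigma^{-1}(i)-2,\,\sigma^{-1}(i)-1,\,\sigma^{-1}(i)\}$. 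Intersecting these two windows and using the explicit displacements $\sigma^{-1}(i)-i$ for $\sigma = \sigma_X$—which equal $+2$ at the interior even positions, $-2$ at the interior odd positions, and $\pm 1$ at the two boundary positions—forces the value of $b_i$ at every interior position: a displacement of $-2$ forces $b_i = i-2$ (so $v_i$ is the tail of a backedge), and a displacement of $+2$ forces $b_i = i$ (so $v_i$ is the head of a backedge). Only the two boundary positions remain ambiguous between ``head'' and ``neither'', and a counting check (the number of heads must equal the number of tails) shows exactly one boundary position is a head and the other is the end of no backedge, splitting the argument into two sub-cases.

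With the indegree sequence determined, Proposition \ref{degrees} tells us exactly which vertices are heads of backedges, which are tails, and which are neither, so to reconstruct $G$ it only remains to decide how the tail positions are paired with the head positions by backedges (every non-backedge is forced to point forward in the first ordering). This pairing step is the main obstacle: the head/tail data alone does not fix the pairing—already at $n=5$ two pairings are consistent with it—so I will invoke the requirement that the \emph{second} ordering is also a matching ordering. Concretely, I will show that any pairing other than the intended one creates a vertex that is an endpoint of two distinct backedges in the ordering $v_{\sigma(1)},\ldots,v_{\sigma(n)}$, contradicting the matching condition there; this eliminates every pairing but one when $\sigma = \sigma_X$. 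Carrying this exchange argument out uniformly across both parities of $n$ and both boundary sub-cases is the bulk of the work.

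Once the unique backedge set is identified, I will recognize the reconstructed tournament: in one sub-case the indegree sequence is $0,1,\ldots,n-1$, forcing $G$ to be transitive, and in the other the reconstructed adjacencies are exactly those of $P_n$. Matching the reconstructed vertices against the canonical labeling (the standard ordering of the transitive tournament, or the defining ordering of $P_n$) then shows that the given ordering $v_1,\ldots,v_n$ is precisely $\pi_2 G$, as claimed.

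For the second bullet I argue by symmetry: if $\sigma = \sigma_X^{-1}$ sends the first ordering to the second, then $\sigma_X$ sends the second ordering back to the first, so applying the first bullet with the two orderings interchanged shows $G$ is transitive or $P_n$ with the \emph{second} ordering equal to $\pi_2 G$; unwinding this identifies the first ordering as $\pi_1 G$. Finally, for $n=4$ and $\sigma = \tau_X$ the same indegree analysis forces the score sequence $1,1,2,2$, which is realized by no tournament other than $P_4$; here the pairing step genuinely admits two solutions, and these correspond exactly to the orderings $\pi_2 G$ and $\pi_2\pi_1 G$, which is precisely why $P_4$ is the exceptional case. I expect the pairing-determination step to be the crux; the remainder is degree bookkeeping and identification against the definitions of $\pi_1$, $\pi_2$, and $P_n$.
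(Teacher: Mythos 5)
Your route is genuinely different from the paper's and, as far as I can tell, viable, but as written it stops short of a proof at exactly the point you yourself flag as the crux. The paper never reconstructs $G$ from degree data at all: it uses the identity $\sigma_X = \pi_1\pi_2$ to replace $v_1,\ldots,v_n$ by its $\pi_2$-image $u_1,\ldots,u_n$, so that the two given matching orderings become precisely $u_{\pi_1(1)},\ldots,u_{\pi_1(n)}$ and $u_{\pi_2(1)},\ldots,u_{\pi_2(n)}$; a short local lemma (each edge between consecutive vertices of $u_1,\ldots,u_n$ is a backedge of exactly one of the two images, so a backedge of length $\ge 2$ or a ``mixed'' pair of consecutive edges would force some vertex to be an end of two backedges in one image) then yields that $G$ is transitive or $P_n$ with $u_1,\ldots,u_n = 1\cdot G$, hence $v_1,\ldots,v_n = \pi_2 G$. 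Your degree-window analysis via Proposition \ref{degrees} is a legitimate alternative first step, your symmetry argument for the second bullet does unwind correctly (since $\pi_2$ is an involution, $\pi_2(\sigma_X(j)) = \pi_1(j)$), and your $n=4$ analysis is essentially the paper's.

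The gap is that the step ``any pairing other than the intended one creates a vertex that is an endpoint of two distinct backedges in the second ordering'' is announced rather than proved, and for the first two bullets that is where all of the content lives: the indegree windows determine which vertices are heads and which are tails of backedges in $v_1,\ldots,v_n$, but they leave a large number of candidate head--tail pairings, and eliminating all but one of them uniformly in $n$, in both parities and in both boundary sub-cases, is the entire theorem. The claim is true and can be closed; the cleanest way I see is to note that if $v_j$ is the tail of the backedge $v_jv_i$ in the first ordering, then $b_j = j-2 = \sigma_X^{-1}(j)$, so Proposition \ref{degrees} applied to the \emph{second} ordering forces $v_j$ to be the \emph{head} of a backedge there and hence not the tail of one; therefore $v_jv_i$ must point forward in the second ordering, i.e.\ $\sigma_X^{-1}(j) < \sigma_X^{-1}(i)$, which bounds $j-i$ and lets a greedy induction from the left (or right) end isolate the unique pairing in each sub-case. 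Until an argument of this kind is actually written out, the proposal is a plan for a proof rather than a proof.
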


For the sake of pacing, we defer the proofs of these propositions until after the next subsection.

We now state and prove the main theorem.

\begin{thm} \label{orderings}
Let $G$ be a tournament and $v_1,\ldots,v_n$ be a matching ordering of $G$. Suppose there is a permutation $\pi \in S_n$ such that $v_{\pi(1)}, \ldots, v_{\pi(n)}$ is also a matching ordering of $G$. Then $\{1,\ldots,n\}$ can be partitioned into sets $X_1,\ldots,X_r$, where each $X_i$ is a set of consecutive integers, such that
\begin{itemize}
\item $\pi = \sigma_1 \sigma_2 \ldots \sigma_r$ where for each $1 \le i \le r$, $\sigma_i$ is either $\sigma_{X_i}$, $\sigma_{X_i}^{-1}$, or $\tau_{X_i}$.
\item For each $1 \le i \le r$, the set $V_i = \{v_x : x \in X_i\}$ is a homogeneous set of $G$.
\item For each $1 \le i \le r$, the subtournament $H_i$ induced on $V_i$ is either transitive or isomorphic to $P_{|V_i|}$. Furthermore, the ordering induced on $V_i$ by the ordering $v_1,\ldots,v_n$ is given by $\sigma_i$ as in Proposition \ref{Xorderings}.
\end{itemize}
\end{thm}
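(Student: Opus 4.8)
The plan is to analyze the permutation $\pi$ by decomposing it into disjoint cycles and applying Proposition~\ref{Xcycle} to each cycle. The hypothesis that both $v_1,\ldots,v_n$ and $v_{\pi(1)},\ldots,v_{\pi(n)}$ are matching orderings should force $|\pi(x) - x| \le 2$ for every $x$, which is exactly the condition needed to invoke Proposition~\ref{Xcycle}. First I would establish this degree bound: by Proposition~\ref{degrees}, the position of each vertex in a matching ordering is determined by its indegree up to a shift of at most $2$ (it is $v_b$, $v_{b+1}$, or $v_{b+2}$ where $b$ is the indegree). Since indegree is an isomorphism invariant independent of the ordering, any vertex occupying position $x$ in the first ordering occupies a position within $\{b, b+1, b+2\}$ in both orderings, and these two position-sets overlap, giving $|\pi(x) - x| \le 2$.

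**Building the partition from cycles.**
Next I would write $\pi$ as a product of disjoint cycles $\sigma_1 \sigma_2 \cdots \sigma_r$. By Proposition~\ref{Xcycle}, each cycle $\sigma_i$ either has its support $X_i$ equal to a set of consecutive integers with $\sigma_i = \sigma_{X_i}$ or $\sigma_{X_i}^{-1}$, or else is a transposition $(x_1\ps x_2)$ with $|x_1 - x_2| = 2$. The transposition case is the delicate one: a $2$-cycle $(c\ps c+2)$ fixes $c+1$, so its support $\{c, c+2\}$ is not a set of consecutive integers, and it does not immediately fit the $\sigma_X$ or $\sigma_X^{-1}$ template. I expect this to be the main obstacle. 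The resolution should be that such a transposition forces the intermediate element $c+1$ to interact with $c$ and $c+2$ in a very constrained way; in fact, I anticipate showing that $(c\ps c+2)$ together with the (trivial) behavior at $c+1$ assembles into the block pattern $\tau_X$ on a four-element consecutive set $X = \{c-1,c,c+1,c+2\}$ or $\{c,c+1,c+2,c+3\}$, matching the third case of Proposition~\ref{Xorderings}. I would verify that the supports $X_i$ (after this merging) are pairwise disjoint and, being the relevant consecutive blocks, partition $\{1,\ldots,n\}$ once the fixed points are absorbed as singleton blocks with $\sigma_i = \mathrm{id} = \sigma_{X_i}$.

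**Homogeneity and the structure of each block.**
Having fixed the partition into consecutive blocks $X_i$ with the prescribed factorization of $\pi$, I would turn to the second and third bullet points. For each block, I would apply Proposition~\ref{Xorderings} to the permutation $\sigma_i$ restricted to $X_i$: since $\sigma_i$ is one of $\sigma_{X_i}$, $\sigma_{X_i}^{-1}$, or $\tau_{X_i}$, and since restricting both global orderings to $V_i$ yields two matching orderings of the induced subtournament $H_i$ related by $\sigma_i$, the proposition directly gives that $H_i$ is transitive or isomorphic to $P_{|V_i|}$ and pins down the induced ordering. The one point requiring care is justifying that the restriction of a matching ordering of $G$ to the vertex set $V_i$ is itself a matching ordering of $H_i$; this follows because deleting vertices cannot create new backedges or increase the number of backedges incident to a surviving vertex.

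**Homogeneity of the blocks.**
The remaining claim, that each $V_i$ is a homogeneous set of $G$, is where I would spend the most additional effort. The idea is that a vertex $v_y$ with $y$ outside $X_i$ lies either entirely before or entirely after the block $X_i$ in \emph{both} orderings (since $|\pi(y)-y|\le 2$ and $\pi$ acts as a block permutation, $y$ cannot be moved across a block boundary into the interior of $X_i$). If $y < \min X_i$, then in the standard matching-ordering sense $v_y \ra v_x$ for all $x \in X_i$ except possibly via backedges; but any backedge from the block into $v_y$, or from $v_y$ into the block, would have to be consistent across both orderings, and the bounded displacement $|\pi(\cdot)-\cdot|\le 2$ together with the matching condition (each vertex is an end of at most one backedge) forces $v_y$ to relate uniformly to the entire block. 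I would make this precise by showing that for $y \notin X_i$, either $v_y \Ra V_i$ or $V_i \Ra v_y$, checking the boundary edges at $\min X_i$ and $\max X_i$ directly. Once this uniformity is established for every $y \notin X_i$, the set $V_i$ satisfies the definition of a homogeneous set, completing the proof.
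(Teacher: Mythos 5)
Your overall architecture is the same as the paper's: bound $|\pi(i)-i|\le 2$ via Proposition \ref{degrees}, decompose $\pi$ into disjoint cycles, apply Proposition \ref{Xcycle}, assemble the exceptional transpositions into $\tau_X$-blocks, prove homogeneity of the blocks, and finish with Proposition \ref{Xorderings}. However, your handling of the transposition case contains a genuine gap. You write that a $2$-cycle $(c \ps c+2)$ ``together with the (trivial) behavior at $c+1$'' assembles into $\tau_X$ on a four-element consecutive set. This cannot work as stated: $\tau_X$ has no fixed points in $X$, so if $\pi$ fixes $c+1$ there is no way to absorb $c$, $c+1$, $c+2$ into a $\tau_X$-block, and $(c\ps c+2)$ by itself is not of the form $\sigma_X^{\pm 1}$ or $\tau_X$ for any consecutive $X$. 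The case $\pi(c+1)=c+1$ must therefore be \emph{excluded}, and this requires an argument you do not supply: the paper observes that if $\pi(c+1)=c+1$ then the orderings induced on $\{v_c,v_{c+1},v_{c+2}\}$ by the two matching orderings are $v_c,v_{c+1},v_{c+2}$ and $v_{c+2},v_c,v_{c+1}$, and a direct check shows that neither of the two three-vertex tournaments admits both of these as matching orderings. Only then does it follow that $c+1$ lies in \emph{another} displacement-two transposition, $(c-1\ps c+1)$ or $(c+1\ps c+3)$, and pairing the two transpositions yields $\tau_X$ on the resulting four-element block. You name the correct candidate blocks but never say where the fourth element comes from, and your stated mechanism is inconsistent with the conclusion you want.

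Secondarily, your homogeneity argument is under-specified. The actual mechanism is sharper than ``$v_y$ relates uniformly to the block'': one shows there is \emph{no} backedge between $V_i$ and a vertex outside it. If $v_x\ra v_{b'}$ with $x\in X_i$ and $b'<\min X_i$, then since $b'$ and $x$ lie in different blocks, each a set of consecutive integers, the edge $v_xv_{b'}$ is a backedge of both orderings, so $v_x$ is the tail of a backedge in both; the second sentence of Proposition \ref{degrees} then forces $v_x$ to occupy the same position in both orderings, contradicting that $\sigma_i$ has no fixed points on a block of size $\ge 2$. Your sketch invokes ``consistency across both orderings'' and the matching condition but never identifies this position-pinning step, which is what actually closes the argument.
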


\begin{proof}
Suppose vertex $v_{\pi(i)}$ has indegree $b$. By Proposition \ref{degrees}, for any matching ordering $u_1,\ldots,u_n$ of $G$, we have that $v_{\pi(i)}$ appears in the $b$-th, $(b+1)$-th, or $(b+2)$-th position. In particular, since $v_{\pi(i)}$ appears in the $\pi(i)$-th position of $v_1,\ldots,v_n$ and in the $i$-th position of $v_{\pi(1)}, \ldots, v_{\pi(n)}$, we have $\pi(i),i \in \{b,b+1,b+2\}$. It follows that $|\pi(i) - i| \le 2$ for all $1 \le i \le n$. 

Now, we can write $\pi = \sigma_1^\prime \cdots \sigma_{r^\prime}^\prime$ where $\sigma_1^\prime, \ldots, \sigma_{r^\prime}^\prime$ are disjoint cycles whose orbits form a partition of $\{1,\ldots,n\}$. (Some of the $\sigma_i^\prime$ may have orbits of size one.) Let $X_i^\prime$ be the orbit of $\sigma_i^\prime$. Now, for each $1 \le i \le r^\prime$, we have $|\sigma_i^\prime(x) - x| \le 2$ for all $1 \le x \le n$. Hence, by Proposition \ref{Xcycle}, for each $1 \le i \le r^\prime$ either $X_i^\prime$ is a set of consecutive integers and $\sigma_i^\prime = \sigma_{X_i}^{\pm 1}$, or $X_i^\prime = \{x, x+2\}$ for some $x$.

Suppose $X_i^\prime = \{x, x+2\}$ for some $i$. The integer $x+1$ must belong to $X_j^\prime$ for some $j \neq i$. Either $X_j^\prime$ is a set of consecutive integers, or $X_j^\prime = \{x-1,x+1\}$ or $\{x+1,x+3\}$. Suppose the former case; then since $x,x+2 \in X_i^\prime$, we must have $X_j^\prime = \{x+1\}$. Thus, $\pi(x+1) = x+1$. Now, consider the subtournament $H$ induced on $\{v_x,v_{x+1},v_{x+2}\}$. Using the orderings of $V(H)$ induced by the matching orderings $v_1,\ldots,v_n$ and $v_{\pi(1)},\ldots,v_{\pi(n)}$, we have that $v_x,v_{x+1},v_{x+2}$ and $v_{x+2},v_x,v_{x+1}$ are both matching orderings of $H$. However, there are only two three-vertex tournaments, and simple inspection shows that neither has two matching orderings satisfying this. This is a contradiction, so $X_j^\prime$ is not a set of consecutive integers.

Hence, $X_j^\prime = \{x-1,x+1\}$ or $\{x+1,x+3\}$. Either way, $X_i^\prime \cup X_j^\prime$ is a set of four consecutive integers, and $\sigma_i^\prime \sigma_j^\prime = \tau_{X_i^\prime \cup X_j^\prime}$. We can pair up all the $X_i^\prime$ of the form $\{x,x+2\}$ in this manner. Replacing each such pair with their union and including all the $X_i^\prime$ that were originally sets of consecutive integers, we have in the end a partition $\{X_1,\ldots,X_r\}$ of $\{1,\ldots,n\}$ where each $X_i$ is a set of consecutive integers, and $\pi = \sigma_1 \cdots \sigma_r$ where $\sigma_i$ is either $\sigma_{X_i}^{\pm1}$ or $\tau_{X_i}$, as desired.

Next, we wish to show that each $V_i = \{v_x : x \in X_i\}$ is a homogeneous set in $G$. If $|V_i| = 1$ then we are trivially done. So assume $|V_i| \ge 2$. Let $X_i = \{b, b+1, \ldots, a\}$. We will show that if $b^\prime < b$ then $v_{b^\prime} \Ra V_i$, and if $a^\prime > a$ then $V_i \Ra v_{a^\prime}$, which will prove the claim. Let $b^\prime < b$, and suppose there is some $x \in X_i$ such that $v_x \ra v_{b^\prime}$. Then $v_x$ is the tail of the backedge $v_x v_{b^\prime}$ in the ordering $v_1,\ldots,v_n$. Now, since $b^\prime$ and $x$ are in different cycle orbits of $\pi$, and these orbits are sets of consecutive integers, $v_x v_{b^\prime}$ is also a backedge of $v_{\pi(1)}, \ldots, v_{\pi(n)}$. Thus, $v_x$ is also the tail of the backedge $v_x v_{b^\prime}$ in $v_{\pi(1)}, \ldots, v_{\pi(n)}$. By the second sentence of Proposition \ref{degrees}, $v_x$ must therefore be in the same position in $v_{\pi(1)}, \ldots, v_{\pi(n)}$ as it is in $v_1,\ldots,v_n$. However, this is a contradiction, because $x \in X_i$ and $\sigma_i$ has no fixed points in $X_i$ for $|X_i| \ge 2$. We therefore have $v_{b^\prime} \Ra V_i$. The proof that $V_i \Ra v_{a^\prime}$ for $a^\prime > a$ is analagous, so we have the desired claim.

Finally, to prove the last point, let $X_i = \{b, b+1, \ldots, a\}$. The orderings induced on $V_i$ by $v_1,\ldots,v_n$ and $v_{\pi(1)}, \ldots, v_{\pi(n)}$ are $v_b, v_{b+1}, \ldots, v_a$ and $v_{\sigma_i(b)}, v_{\sigma_i(b+1)}, \ldots, v_{\sigma_i(a)}$, respectively. These are matching orderings of $H_i$, so by Proposition \ref{Xorderings}, $H_i$ is either transitive or isomorphic to $P_{|V_i|}$. Furthermore, the ordering $v_b, v_{b+1}, \ldots, v_a$ is given by $\sigma_i$ as in Proposition \ref{Xorderings}, as desired.
\end{proof}

\subsection{Corollaries to Theorem \ref{orderings}}

We state a few notable corollaries to the previous theorem.

\begin{cor} \label{primematching}
Every prime tournament which is not $P_n$ for any $n$ has at most one matching ordering.
\end{cor}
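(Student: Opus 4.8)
The plan is to read this off directly from Theorem \ref{orderings}. Suppose for contradiction that $G$ is a prime tournament, not isomorphic to $P_n$ for any $n$, yet admits two distinct matching orderings. Fix one of them as $v_1,\ldots,v_n$; the other is then $v_{\pi(1)},\ldots,v_{\pi(n)}$ for some $\pi \in S_n$ with $\pi \neq \mathrm{id}$ (distinctness of the orderings forces $\pi$ to move at least one index). Applying Theorem \ref{orderings} produces a partition of $\{1,\ldots,n\}$ into consecutive blocks $X_1,\ldots,X_r$, a factorization $\pi = \sigma_1 \cdots \sigma_r$ with each $\sigma_i \in \{\sigma_{X_i}, \sigma_{X_i}^{-1}, \tau_{X_i}\}$, and homogeneous sets $V_i = \{v_x : x \in X_i\}$ whose induced subtournaments $H_i$ are transitive or isomorphic to $P_{|V_i|}$.

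The key leverage is primeness: each $V_i$ is a homogeneous set of $G$, so it must be trivial, i.e.\ $|V_i| = 1$ or $|V_i| = n$. First I would dispose of the case where every block is a singleton. On a singleton $X_i$ the permutation $\tau_{X_i}$ is undefined (it requires $|X_i| = 4$) and $\sigma_{X_i}^{\pm 1}$ is the identity, so every $\sigma_i$ is trivial and hence $\pi = \mathrm{id}$---contradicting the distinctness of the two orderings.

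It remains to treat the case that some block has size $n$. Then $r = 1$, $V_1 = V(G)$, and the third bullet of Theorem \ref{orderings} says $G = H_1$ is either transitive or isomorphic to $P_n$. The latter is forbidden by hypothesis. If instead $G$ were transitive, primeness would force $|V(G)| \le 2$, since (as already noted in Section 2) $I_m$ has a nontrivial homogeneous set for every $m \ge 3$; but $n = 1$ again makes $\pi$ the identity, and $n = 2$ gives $G = I_2 = P_2$, which is $P_n$ and so excluded. Every case yields a contradiction, proving the corollary.

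I expect no serious obstacle here, as Theorem \ref{orderings} does all the heavy lifting. The only points requiring care are the degenerate small cases ($n \le 2$ and the coincidence $I_2 = P_2$) and the observation that a singleton block cannot carry a $\tau$-factor, so that collapsing all blocks to singletons genuinely forces $\pi = \mathrm{id}$ rather than leaving room for a nontrivial rearrangement.
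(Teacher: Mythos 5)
Your proof is correct and follows essentially the same route as the paper's: apply Theorem \ref{orderings}, use primeness to force each block $V_i$ to be trivial, and then rule out the single-block case via the third bullet of that theorem. Your treatment of the degenerate transitive case (noting that a prime transitive tournament is $I_1$ or $I_2 = P_2$) is slightly more explicit than the paper's, but the argument is the same.
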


\begin{proof}
Let $G$ be a prime tournament which is not $P_n$ for any $n$, and suppose $v_1,\ldots,v_n$ is a matching ordering of $G$. Let $\pi \in S_n$ such that $v_{\pi(1)},\ldots,v_{\pi(n)}$ is a matching ordering of $G$. Write $\pi = \sigma_1\sigma_2\cdots\sigma_r$ as in Theorem \ref{orderings}. If $1 < |X_i| < |V(G)|$ for any $i$, then $V_i = \{v_x : x \in X_i\}$ is a nontrivial homogeneous set of $G$, a contradiction. So we must either have $|X_i| = 1$ for all $i$, or $r=1$ and $X_1 = \{1,\ldots,n\}$. If the former case, then $\pi$ is the identity, so $v_{\pi(1)},\ldots,v_{\pi(n)}$ and $v_1,\ldots,v_n$ are the same ordering. If the latter case, then $V_1 = V(G)$ and so by the third point of Theorem \ref{orderings}, $G$ is either transitive or $P_n$, contradicting our assumptions on $G$. Thus, $G$ has at most one matching ordering.
\end{proof}

\begin{cor} \label{Pnmatching}
$P_n$ has exactly one matching ordering for $n = 1$, exactly three matching orderings for $n=3,4$, and exactly two matching orderings for all other $n$.
\end{cor}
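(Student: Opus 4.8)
The plan is to count matching orderings by fixing one reference matching ordering and, via Theorem~\ref{orderings}, describing all permutations that produce another; Proposition~\ref{Xorderings} then decides which of the resulting candidates genuinely occur. Throughout, write $P_n$ with its defining vertex order $u_1,\ldots,u_n$, so that $\pi_1 P_n$ and $\pi_2 P_n$ are the two matching orderings exhibited in Subsection~\ref{matchingorderings}; these are distinct for $n>1$. The case $n=1$ is immediate (there is a single ordering), and $n=2$ is immediate as well (both orderings of $P_2$ are matching), so assume $n\ge 3$. I would take the reference matching ordering to be $\pi_2 P_n$ and call it $v_1,\ldots,v_n$; then every matching ordering is $v_{\pi(1)},\ldots,v_{\pi(n)}$ for a unique $\pi\in S_n$, with $\pi=\mathrm{id}$ recovering the reference, so it suffices to count the admissible $\pi$. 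By Theorem~\ref{orderings} each admissible $\pi$ factors as $\sigma_1\cdots\sigma_r$ over a partition of $\{1,\ldots,n\}$ into consecutive blocks $X_i$, where each $V_i$ is a homogeneous set of $P_n$, each induced subtournament is transitive or isomorphic to $P_{|X_i|}$, and each $\sigma_i\in\{\sigma_{X_i},\sigma_{X_i}^{-1},\tau_{X_i}\}$ (with $\tau$ permitted only when $|X_i|=4$).

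For the generic range $n\ge 5$, where $P_n$ is prime, all homogeneous sets are trivial, so every block is a singleton or all of $\{1,\ldots,n\}$. Hence a non-identity $\pi$ must consist of the single whole block, with $\sigma_1\in\{\sigma_X,\sigma_X^{-1}\}$ (no $\tau_X$, since $n\ne 4$) and induced subtournament $P_n$. It then remains to decide which of $\sigma_X,\sigma_X^{-1}$ is admissible. Applying $\sigma_X$ to the reference $\pi_2 P_n$ produces the ordering $\pi_1 P_n$ (using $\sigma_X=\pi_1\pi_2$ and $\pi_2^2=\mathrm{id}$), which is a matching ordering, so $\sigma_X$ is admissible; whereas if $\sigma_X^{-1}$ were admissible, Proposition~\ref{Xorderings} would force the reference to be $\pi_1 P_n$, impossible since $\pi_1 P_n\ne\pi_2 P_n$. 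Thus the admissible $\pi$ are exactly $\mathrm{id}$ and $\sigma_X$, giving exactly two matching orderings.

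The remaining cases $n=3,4$ need separate care. For $n=3$, where $P_3$ is the cyclic triangle and its defining order is not unique, I would simply inspect all six orderings and observe that precisely the three cyclic rotations are matching orderings (each carries a single backedge), giving three. For $n=4$, $P_4$ is not prime, but its only nontrivial homogeneous set is $\{u_1,u_4\}$, which under the reference $\pi_2 P_4$ occupies the \emph{non-consecutive} positions $\{1,4\}$ and so cannot be a block; hence every block is again a singleton or the whole set, and a non-identity $\pi$ is the single whole block with $\sigma_1\in\{\sigma_X,\sigma_X^{-1},\tau_X\}$. As before $\sigma_X$ is admissible and $\sigma_X^{-1}$ is not; the extra option $\tau_X$ is now available, and applying it to the reference $\pi_2 P_4$ produces the matching ordering $\pi_2\pi_1 P_4$ noted earlier (using $\tau_X=\pi_2\pi_1\pi_2$ and $\pi_2^2=\mathrm{id}$), so $\tau_X$ is admissible. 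Thus the admissible $\pi$ are $\mathrm{id}$, $\sigma_X$, and $\tau_X$, giving exactly three matching orderings.

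The main obstacle is organizational rather than computational: one must choose the reference to be $\pi_2 P_n$ so that the single certified full-support candidate is $\sigma_X$ while $\sigma_X^{-1}$ is excluded through the inequality $\pi_1 P_n\ne\pi_2 P_n$ supplied by Proposition~\ref{Xorderings}, and one must verify for $n=4$ that $\{u_1,u_4\}$ is the only nontrivial homogeneous set of $P_4$ and that it fails to be a consecutive block in the reference, so that no unexpected block structures—and hence no matching orderings beyond those arising from $\mathrm{id}$, $\sigma_X$, and $\tau_X$—can occur.
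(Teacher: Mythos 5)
Your proof is correct and follows essentially the same route as the paper: for $n \ge 5$ primeness forces the blocks of Theorem \ref{orderings} to be trivial, Proposition \ref{Xorderings} rules out one of $\sigma_X^{\pm 1}$ so that only two orderings survive, and the small cases are settled directly. The only difference is cosmetic: where the paper simply checks $n=1,2,3,4$ by hand, you rerun the block analysis for $n=4$ using the fact that the unique nontrivial homogeneous set $\{u_1,u_4\}$ of $P_4$ occupies non-consecutive positions in the reference ordering, which is a valid (and slightly more systematic) way to arrive at the count of three.
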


\begin{proof}
We can check the cases $n = 1,2,3,4$ by hand. Assume $n \ge 5$; thus, $P_n$ is prime. Let $v_1,\ldots,v_n$ be a matching ordering of $P_n$, and suppose $\pi \in S_n$ such that $v_{\pi(1)},\ldots,v_{\pi(n)}$ is a matching ordering of $P_n$. By the same argument as in the previous proof, either $\pi = 1$ or $\pi = \sigma_X^e$, where $X = \{1,\ldots,n\}$ and $e = \pm 1$. (We do not have $\pi = \tau_X$ because $|X| \ge 5$.) If $\pi = \sigma_X^e$, then there is only one possible value of $e$, because by Proposition \ref{Xorderings}, $e$ is determined by whether $v_1,\ldots,v_n$ is $\pi_1 P_n$ or $\pi_2 P_n$. Thus, either $\pi=1$ or $\pi = \sigma_X^e$ where there is only one possible value of $e$. It follows that $P_n$ has at most two matching orderings. Since $\pi_1 P_n$ are $\pi_2 P_n$ are distinct matching orderings of $P_n$, $P_n$ has exactly two matching orderings, as desired.
(Note: Although $P_3$ is prime, the above argument does not work for $P_3$ because the way we have defined $\pi P_3$, $v_1,v_2,v_3$ can be both $\pi_1 P_3$ and $\pi_2 P_3$, and hence we cannot determine $e$ from Proposition \ref{Xorderings}.)
\end{proof}

\begin{cor} \label{transitivematching}
$I_n$ has exactly $F_n$ matching orderings, where $\{F_i\}_{i \ge 0}$ are the Fibonacci numbers defined by $F_0 = F_1 = 1$ and $F_i = F_{i-1} + F_{i-2}$ for $i \ge 2$.
\end{cor}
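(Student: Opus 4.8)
The plan is to set up a bijection between matching orderings of $I_n$ and a class of structured permutations, and then extract the Fibonacci recurrence directly. Fix the standard ordering $v_1,\ldots,v_n$ of $V(I_n)$, so $v_i \ra v_j$ exactly when $i < j$. Every ordering of $V(I_n)$ is $v_{\pi(1)},\ldots,v_{\pi(n)}$ for a unique $\pi \in S_n$, and because the tournament is transitive, a pair of positions $i < j$ forms a backedge of this ordering precisely when $\pi(i) > \pi(j)$; that is, the backedges correspond exactly to the inversions of $\pi$. The matching condition, that each vertex is an end of at most one backedge, therefore translates into: every index participates in at most one inversion of $\pi$. So I would first record that matching orderings of $I_n$ are in bijection with the permutations $\pi \in S_n$ in which no index lies in more than one inversion.

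The key structural step is to show that these permutations are exactly the products of pairwise non-overlapping adjacent transpositions. I would argue that every inversion of such a $\pi$ must be between consecutive positions: if $\pi(i) > \pi(j)$ with $j \ge i+2$, then for any $k$ with $i < k < j$, wherever $\pi(k)$ falls relative to $\pi(i)$ and $\pi(j)$, one of the pairs $(i,k)$, $(k,j)$ is a second inversion at $i$ or at $j$, contradicting the at-most-one condition. (Alternatively, this is Theorem \ref{orderings} specialized to $I_n$: the homogeneous sets are exactly the intervals, each induced block is transitive and hence isomorphic to $P_m$ only when $m \le 2$, and a direct check shows that applying $\sigma_X$ or $\sigma_X^{-1}$ to a transitive block of size $\ge 3$ creates a vertex lying in two backedges, so only blocks of size $1$ and $2$ survive.) It follows that $\pi$ fixes all remaining indices and swaps a collection of disjoint consecutive pairs $\{i,i+1\}$, so the matching orderings are in bijection with the matchings of the path on vertices $1,\ldots,n$.

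With this in hand I would finish via a recurrence rather than by an abstract count. Let $a_n$ denote the number of matching orderings of $I_n$. Since $v_n$ has indegree $n-1$, Proposition \ref{degrees} forces $v_n$ into position $n-1$ or $n$. If $v_n$ occupies position $n$, it is the end of no backedge, and deleting it leaves a matching ordering of $I_{n-1}$, contributing $a_{n-1}$. If $v_n$ occupies position $n-1$, the at-most-one-inversion analysis shows position $n$ must hold $v_{n-1}$ (any smaller label there would leave room for a second backedge ending at position $n$); the pair $\{v_{n-1},v_n\}$ is simply transposed, and deleting both leaves a matching ordering of $I_{n-2}$, contributing $a_{n-2}$. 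Hence $a_n = a_{n-1} + a_{n-2}$, and with the base cases $a_1 = 1 = F_1$ and $a_2 = 2 = F_2$ an immediate induction gives $a_n = F_n$.

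The main obstacle I anticipate is precisely the structural step: pinning down that the matching-ordering permutations of $I_n$ are exactly the products of disjoint adjacent transpositions. If one routes this through Theorem \ref{orderings} and Proposition \ref{Xorderings}, some care is required, because Proposition \ref{Xorderings} records only a necessary condition on the induced ordering of a block; for example the length-three cycle $\sigma_X$ meets that condition on a transitive block of size $3$ yet does not produce a matching ordering, so one must still verify independently that blocks of size at least $3$ cannot occur. The self-contained inversion argument avoids this subtlety, which is why I would present it as the primary route and mention the Theorem \ref{orderings} reduction only as an alternative.
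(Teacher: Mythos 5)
Your proof is correct, but it takes a genuinely different route from the paper's. The paper derives the result from its general machinery: since the standard ordering of $I_n$ is itself a (backedge-free) matching ordering, any other matching ordering is $v_{\pi(1)},\ldots,v_{\pi(n)}$ with $\pi$ decomposing into blocks as in Theorem \ref{orderings}; Proposition \ref{Xorderings} then forces the \emph{standard} ordering restricted to each block to be of the form $\pi_s I_{|V_i|}$, $\pi_s P_{|V_i|}$, or $\pi_2\pi_1 P_4$, and since every such ordering on $\ge 3$ vertices has a backedge while the standard ordering has none, all blocks have size at most $2$; the count of partitions into consecutive blocks of size $1$ or $2$ is $F_n$. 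Your primary route bypasses all of this: you identify backedges with inversions of $\pi$, observe that the matching condition means each position lies in at most one inversion, show directly that any inversion spanning a gap forces a second inversion at one of its endpoints (so all inversions are adjacent and $\pi$ is a product of disjoint adjacent transpositions), and then extract the Fibonacci recurrence by casing on the position of $v_n$. This is entirely self-contained and arguably cleaner for this specific corollary, at the cost of not illustrating the general theorem; the paper's version buys uniformity with Corollaries \ref{primematching} and \ref{Pnmatching}. One small note: the subtlety you flag about Proposition \ref{Xorderings} being only a necessary condition does not actually trouble the paper's argument, because the paper applies that necessary condition to the original backedge-free ordering rather than to the new one --- the conclusion that the induced ordering on a size-$\ge 3$ block would have to be some $\pi_s H_i$, all of which contain a backedge, is already a contradiction. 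Your recurrence endgame is also sound (the claim that $v_n$ in position $n-1$ forces $v_{n-1}$ into position $n$ follows from the same two-inversion argument), and the base cases $a_1=1=F_1$, $a_2=2=F_2$ match the paper's indexing.
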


\begin{proof}
Let $v_1,\ldots,v_n$ be the standard ordering of $I_n$. Suppose $\pi \in S_n$ such that $v_{\pi(1)},\ldots,v_{\pi(n)}$ is a matching ordering of $I_n$. For this $\pi$, let $X_1,\ldots, X_r$ be as in Theorem \ref{orderings}. We claim that $|X_i| \le 2$ for all $i$.
By Proposition \ref{Xorderings}, for each $i$ the ordering induced on $V_i = \{v_x : x \in X_i\}$ by $v_1,\ldots,v_n$ is of the form $\pi_s I_{|V_i|}$, $\pi_s P_{|V_i|}$, or $\pi_2\pi_1 P_4$, where $s = 0$ or 1. However, $v_1,\ldots,v_n$ has no backedges, and by inspection the only orderings that are of one of these forms and have no backedges are $\pi_s I_1$ (or $\pi_s P_1$) and $\pi_2 I_2$ (or $\pi_1 P_2$). Thus, $|X_i| \le 2$ for all $i$, as claimed. 

Conversely, for each partition $\{X_1,\ldots,X_r\}$ of $\{1,\ldots,n\}$ where each $X_i$ is a set of consecutive integers of size 1 or 2, there is exactly one $\pi = \sigma_1\cdots\sigma_r$ satisfying Theorem \ref{orderings} which has $\{X_1,\ldots,X_r\}$ as its associated partition, and it is not hard to see that $v_{\pi(1)},\ldots,v_{\pi(n)}$ is a matching ordering of $I$ for this $\pi$. Hence, the number of matching orderings of $I$ is equal to the number of ways to partition $\{1,\ldots,n\}$ into sets of consecutive integers of size 1 or 2. We can prove by induction that this number is $F_n$.
\end{proof}

\subsection{Proofs of Propositions \ref{Xcycle} and \ref{Xorderings}}

We now give the proofs of Propositions \ref{Xcycle} and \ref{Xorderings}.

\begin{proof}[Proof of Proposition \ref{Xcycle}]
It is easy to see the proposition holds for $k=1,2$. Assume $k \ge 3$.
Let $x = \min(x_1,x_2,\ldots,x_k)$. We have both $|\sigma(x_i) - x_i| \le 2$ and $|\sigma^{-1}(x_i) - x_i| \le 2$ for all $1 \le i \le k$. In particular, since $x = \min(x_1,,\ldots,x_k)$, we have $\sigma(x),\sigma^{-1}(x) \in \{x+1,x+2\}$. Since the order of $\sigma$ is $k \ge 3$, we have $\sigma(x) \neq \sigma^{-1}(x)$. Hence, $\{\sigma(x),\sigma^{-1}(x)\} = \{x+1,x+2\}$.

Suppose $\sigma^{-1}(x) = x+1$, so $\sigma(x) = x+2$. We will prove by induction that for all $0 \le j \le k-1$, 
\[
\sigma^{(-1)^j \lceil j/2 \rceil}(x) = x+j.
\]
This gives the value of $\sigma^i(x)$ for $k$ consecutive values of $i$, and these values coincide with $\sigma_X^i(x)$, where $X = \{x,x+1,\ldots,x+k-1\}$. Since $\sigma$ and $\sigma_X$ are both cycles of order $k$, this will imply $\sigma = \sigma_X$, as desired.

The base case $j = 0$ is trivial, and the base cases $j = 1,2$ are given by assumption. Let $3 \le j \le k-1$, and suppose that $\sigma^{(-1)^{j_0} \lceil j_0/2 \rceil}(x) = x+j_0$ for all $0 \le j_0 \le j-1$. In particular, for $j_0 = j-2$ we have
\begin{align*}
\sigma^{(-1)^{j-2} \lceil (j-2)/2 \rceil}(x) &= x+j-2 \\
\sigma^{(-1)^j (\lceil j/2 \rceil - 1)}(x) &= x+j-2 \\
\Ra \sigma^{(-1)^j \lceil j/2 \rceil}(x) &= \sigma^{(-1)^j}(x+j-2).
\end{align*}
Now, $|\sigma^{(-1)^j}(x+j-2) - (x+j-2)| \le 2$, so in particular, $\sigma^{(-1)^j}(x+j-2) \le x+j$. Combined with the previous equality, we have
\[
\sigma^{(-1)^j \lceil j/2 \rceil}(x) \le x+j.
\]
If $\sigma^{(-1)^j \lceil j/2 \rceil}(x) < x+j$, then by the inductive hypothesis we have
\[
\sigma^{(-1)^j \lceil j/2 \rceil}(x) = \sigma^{(-1)^{j_0} \lceil {j_0}/2 \rceil}(x)
\]
for some $0 \le j_0 \le j-1$. However, then we have
\[
\sigma^{(-1)^j \lceil j/2 \rceil - (-1)^{j_0} \lceil {j_0}/2 \rceil}(x) = x,
\]
which is a contradiction because the order of $\sigma$ is $k$, and
\begin{align*}
\left| (-1)^j \lceil j/2 \rceil - (-1)^{j_0} \lceil {j_0}/2 \rceil \right| &\le \lceil j/2 \rceil + \lceil {j_0}/2 \rceil \\
&\le \lceil (k-1)/2 \rceil + \lceil (k-2)/2 \rceil \\
&< k.
\end{align*}
Hence, we must have $\sigma^{(-1)^j \lceil j/2 \rceil}(x) = x+j$, which completes the induction and the proof if $\sigma^{-1}(x) = x+1$.

If instead $\sigma(x) = x+1$, apply the above argument to $\sigma^{-1}$. Then $\sigma^{-1} = \sigma_X$, where $X = \{x,x+1,\ldots,x+k-1\}$, so $\sigma = \sigma_X^{-1}$, as desired.

\end{proof}

\begin{proof}[Proof of Proposition \ref{Xorderings}]
We will use the following lemma.

\begin{lem} \label{Xorderingslem}
Let $G$ be a tournament, and let $v_1,\ldots,v_n$ be an ordering of $V(G)$ (not necessarily a matching ordering). Suppose that $v_{\pi_1(1)},\ldots,v_{\pi_1(n)}$ and $v_{\pi_2(1)},\ldots,v_{\pi_2(n)}$ are both matching orderings of $G$. Then $G$ is either transitive or isomorphic to $P_n$, and $v_1,\ldots,v_n$ is the ordering $1 \cdot G$, where 1 is the identity element of $S_n$.
\end{lem}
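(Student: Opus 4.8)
The plan is to pin down the indegree of every vertex using Proposition \ref{degrees}, use this to reconstruct all edges of $G$ relative to the base ordering $v_1,\ldots,v_n$, and then read off that $G$ must be transitive or $P_n$ with $v_1,\ldots,v_n$ as its canonical ordering. Write $b_i = |B_G(v_i)|$. In the matching ordering $\pi_1 G$ the vertex $v_i$ occupies position $\pi_1(i)$, and in $\pi_2 G$ it occupies position $\pi_2(i)$; since both are matching orderings, Proposition \ref{degrees} gives $\pi_1(i),\pi_2(i) \in \{b_i, b_i+1, b_i+2\}$ for every $i$. For each \emph{interior} index $2 \le i \le n-1$ the two permutations displace $i$ by one in opposite directions, so $\{\pi_1(i),\pi_2(i)\} = \{i-1,i+1\}$; as these two values must fit in a window of length three, this forces $b_i = i-1$. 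Proposition \ref{degrees} then records exactly when $v_i$ is a tail: in $\pi_1 G$ it is a tail precisely when $i$ is odd and a head when $i$ is even, with the roles exchanged in $\pi_2 G$. At the endpoints the mechanism is weaker, giving only $b_1 \in \{0,1\}$ and $b_n \in \{n-2,n-1\}$, and this residual freedom is exactly what will separate the transitive case from the $P_n$ case.

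Next I would show that every \emph{non-adjacent} edge points forward, i.e.\ $v_i \ra v_j$ whenever $j \ge i+2$. For interior $i$, choose the ordering in which $v_i$ is a tail ($\pi_1 G$ if $i$ is odd, $\pi_2 G$ if $i$ is even). In that ordering $v_i$ precedes $v_j$ (a short check on positions, using that each permutation moves an index by at most one), and since $v_i$ is a tail it is the end of no other backedge and in particular is not a head; hence the edge between them cannot be a backedge, so it points forward. The index $i=1$ is handled directly: $v_1$ sits at position $2$ in $\pi_1 G$, where it could only be a head if $b_1 = 2$, which is impossible, so $v_1 \ra v_j$ for all $j \ge 3$.

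With the non-adjacent edges understood, the consecutive-vertex edges fall into line. For interior $i$, counting in-neighbours of $v_i$ gives $b_i = (i-2) + [\,v_{i-1}\ra v_i\,] + [\,v_{i+1}\ra v_i\,] = i-1$, so exactly one of the two adjacency edges at $v_i$ points into $v_i$; a one-line case check shows this is equivalent to the edges $\{v_{i-1},v_i\}$ and $\{v_i,v_{i+1}\}$ having the same orientation, and chaining over $i = 2,\ldots,n-1$ forces all of $\{v_k,v_{k+1}\}$ to share a single orientation (all forward, or all backward). It remains to clean up $v_n$: taking whichever of $\pi_1,\pi_2$ transposes the pair $(n-1,n)$, vertex $v_n$ sits at position $n-1$ while $v_{n-1}$ sits last, and Proposition \ref{degrees} together with $b_n \in \{n-2,n-1\}$ shows $v_n$ is either a sink (all $v_k \ra v_n$, so $\{v_{n-1},v_n\}$ is forward) or carries its single backedge to $v_{n-1}$ (so $v_k \ra v_n$ for $k \le n-2$ and $\{v_{n-1},v_n\}$ is backward). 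Combining this with the common orientation and the forward non-adjacent edges, $G$ is transitive (all consecutive edges forward) or $P_n$ (all backward), and in either case $v_1,\ldots,v_n$ is precisely the ordering $1 \cdot G$.

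I expect the endpoint analysis to be the main obstacle. The clean ``opposite displacement forces $b_i = i-1$'' argument applies only to interior indices, and at $i=1$ and $i=n$ two indegree values survive; furthermore which of $\pi_1,\pi_2$ fixes versus transposes each endpoint depends on the parity of $n$ (so that the pair $(n-1,n)$ is transposed by $\pi_1$ for one parity and by $\pi_2$ for the other). The delicate point is therefore to carry out the boundary bookkeeping uniformly and confirm that the two surviving endpoint choices are forced to synchronize with the bulk orientation, rather than combining to yield some third tournament.
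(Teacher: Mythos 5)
Your proof is correct and takes a genuinely different route from the paper's. The paper works edge by edge: it records exactly which edges change backedge status under $\pi_1$ and $\pi_2$ (an adjacent pair $\{v_j,v_{j+1}\}$ flips in exactly one of the two orderings, a pair at distance $\ge 2$ in neither), then uses the matching condition to rule out long backedges and to force all consecutive edges to share one orientation. You instead work vertex by vertex through Proposition \ref{degrees}: the opposite displacements of each interior index pin every interior indegree to $b_i=i-1$ and identify each interior vertex as a forced head or tail in each ordering, from which the forward orientation of all non-adjacent edges and then the common orientation of all consecutive edges follow by counting. Your route is slightly longer but yields the indegree sequence and the full edge set explicitly; the paper's is shorter because the matching condition is applied directly to pairs of incident backedges. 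Two remarks. First, your bookkeeping (like the paper's own key fact about which adjacent pairs flip) presumes that $\pi_1$ and $\pi_2$ between them realize \emph{every} adjacent transposition $\tau_1,\ldots,\tau_{n-1}$; this is what the figures and the identity $\sigma_{\{1,\ldots,n\}}=\pi_1\pi_2$ require, but the displayed formula $\pi_2=\tau_2\cdots\tau_{2\lfloor n/2\rfloor-2}$ stops at $\tau_{n-3}$ when $n$ is odd, omitting $\tau_{n-1}$. Taken literally that omission breaks your claim that some $\pi_s$ transposes $(n-1,n)$ (and in fact makes the lemma false --- e.g.\ $P_4$ with a dominated fifth vertex appended then admits both permuted orderings as matching orderings); read with $\pi_2=\tau_2\tau_4\cdots\tau_{n-1}$ for odd $n$, as clearly intended, your interior and endpoint claims are all correct. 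Second, your final cleanup at $v_n$ is not actually needed: the index $i=n-1$ is interior, so the chaining of same-orientation consecutive edges already reaches $\{v_{n-1},v_n\}$, and together with the forward non-adjacent edges this determines $G$ as transitive or $P_n$ with $v_1,\ldots,v_n$ the canonical ordering.
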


\begin{proof}
We first introduce some notation: for distinct vertices $u,v$, let $e(u,v)$ denote the edge of $G$ with both ends in $\{u,v\}$. For an integer $i$, let $\pi_i = \pi_1$ if $i$ is odd and $\pi_i = \pi_2$ if $i$ is even. 

Now, note that an edge $e(u,v)$ is a backedge of $v_{\pi_i(1)},\ldots,v_{\pi_i(n)}$ if and only if either
\begin{itemize}
\item $e(u,v)$ is a backedge of $v_1,\ldots,v_n$ and $\{u,v\} \neq \{j,j+1\}$ for any $j \equiv i \pmod{2}$, or
\item $e(u,v)$ is not a backedge of $v_1,\ldots,v_n$ and $\{u,v\} = \{j,j+1\}$ for some $j \equiv i \pmod{2}$.
\end{itemize}
We refer to this fact as ($\ast$). 

We first prove that $v_1,\ldots,v_n$ has no backedge $v_jv_i$ with $j-i \ge 2$.
Suppose that $v_1,\ldots,v_n$ has a backedge $v_jv_i$ with $j-i \ge 2$. By the first point of ($\ast$), $v_jv_i$ is a backedge of both $v_{\pi_1(1)},\ldots,v_{\pi_1(n)}$ and $v_{\pi_2(1)},\ldots,v_{\pi_2(n)}$. Thus, since these two orderings are matching orderings and one of the ends of $v_jv_i$ is $v_i$, we have that $e(v_i,v_{i+1})$ is not a backedge of either of these orderings. However, $(\ast)$ implies that any edge of the form $e(v_{i^\prime},v_{i^\prime+1})$ is a backedge of exactly one of $v_{\pi_1(1)},\ldots,v_{\pi_1(n)}$ and $v_{\pi_2(1)},\ldots,v_{\pi_2(n)}$. This is a contradiction, so $v_1,\ldots,v_n$ has no backedges $v_jv_i$ with $j-i \ge 2$.

Thus, all backedges of $v_1,\ldots,v_n$ are of the form $e(v_i,v_{i+1})$ for some $1 \le i \le n-1$. We claim that either no edge of this form is a backedge, or all edges of this form are backedges. Suppose the contrary. Then there is some $1 \le i \le n-2$ such that exactly one of $e(v_i,v_{i+1})$, $e(v_{i+1},v_{i+2})$ is a backedge. Let $j \in \{i,i+1\}$ such that $e(v_j,v_{j+1})$ is not a backedge. Then by ($\ast$), $e(v_i,v_{i+1})$ and $e(v_{i+1},v_{i+2})$ are both backedges in $v_{\pi_j(1)},\ldots,v_{\pi_j(n)}$. This is a contradiction since $v_{\pi_j(1)},\ldots,v_{\pi_j(n)}$ is a matching ordering and both these edges have end $v_{i+1}$, proving the claim.

Thus, $v_1,\ldots,v_n$ has no backedges $v_jv_i$ with $j-i \ge 2$, and either $e(v_i,v_{i+1})$ is not a backedge for all $i$, or $e(v_i,v_{i+1})$ is a backedge for all $i$. In the former case, $G$ is transitive and $v_1,\ldots,v_n$ is the ordering $1 \cdot G$. In the latter case, $G$ is $P_n$ and $v_1,\ldots,v_n$ is the ordering $1 \cdot G$. This proves the lemma.
\end{proof}

We are now ready to prove the first two points in the proposition. Suppose we have the conditions of the proposition. First, assume $\sigma = \sigma_X$. Let $u_1,\ldots,u_n$ be the ordering $v_{\pi_2(1)},\ldots,v_{\pi_2(1)}$. Then $u_{\pi_2(1)},\ldots,u_{\pi_2(n)}$ is $v_1,\ldots,v_n$, and $u_{\pi_1(1)},\ldots,u_{\pi_1(n)}$ is $v_{\pi_2(\pi_1(1))},\ldots,v_{\pi_2(\pi_1(n)}$, which is $v_{\sigma_X(1)},\ldots,v_{\sigma_X(n)}$ since $\pi_2(\pi_1(i)) = (\pi_1\pi_2)(i)$ and $\pi_1\pi_2 = \sigma_X$. Thus, both $u_{\pi_1(1)},\ldots,u_{\pi_1(n)}$ and $u_{\pi_2(1)},\ldots,u_{\pi_2(n)}$ are matching orderings of $G$. By the Lemma, $G$ is either transitive or isomorphic to $P_n$, and $u_1,\ldots,u_n$ is the ordering $1 \cdot G$. Since $v_1,\ldots,v_n$ is $u_{\pi_2(1)},\ldots,u_{\pi_2(n)}$, we have that $v_1,\ldots,v_n$ is $\pi_2 G$, as desired. The proof for $\sigma = \sigma_X^{-1}$ is analagous.

Finally, suppose $n=4$ and $\sigma = \tau_X$. Define $e(u,v)$ as before. Call and edge a \emph{long} edge if it has one end in $\{v_1,v_2\}$ and the other end in $\{v_3,v_4\}$. Then a long edge is a backedge of $v_{\sigma(1)},\ldots,v_{\sigma(4)}$ if and only if it is not a backedge of $v_1,\ldots,v_n$. Thus, each long edge is a backedge of exactly one of $v_1,\ldots,v_4$ and $v_{\sigma(1)},\ldots,v_{\sigma(4)}$. On the other hand, there are four long edges, and a matching ordering on four vertices can have at most two backedges. Thus, each of $v_1,\ldots,v_4$ and $v_{\sigma(1)},\ldots,v_{\sigma(4)}$ have exactly two backedges, both of which are long edges. So the backedges of $v_1,\ldots,v_4$ are either $\{v_3v_1, v_4v_2\}$ or $\{v_4v_1, v_3v_2\}$. In the first case $v_1,\ldots,v_n$ is the ordering $\pi_2 P_4$, and in the second case it is $\pi_2\pi_1 P_4$, as desired. 
\end{proof}

\subsection{Minimal non-matching tournaments}

A tournament $G$ is a \emph{minimal non-matching} tournament if $G$ is not a matching tournament and every subtournament of $G$ with $ < |V(G)|$ vertices is a matching tournament. Since every subtournament of a matching tournament is also a matching tournament, an equivalent definition of a minimal non-matching tournament is a tournament $G$ which is not a matching tournament and for which every subtournament of $G$ with $|V(G)|-1$ vertices is a matching tournament.

Clearly, a tournament is not a matching tournament if and only if it has a minimal non-matching subtournament. It is then natural to ask whether the list of minimal non-matching tournaments is finite. In this section we answer in the negative.

\begin{thm} \label{inftymin}
There are infinitely many minimal non-matching tournaments.
\end{thm}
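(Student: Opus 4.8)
The plan is to produce, for every $k$, an explicit tournament $G_k$ that is non-matching yet becomes a matching tournament after deleting any single vertex. The organizing tool is the rigidity forced by Proposition \ref{degrees}: in any matching ordering the vertex occupying position $p$ has indegree in $\{p-2,p-1,p\}$, so a vertex of indegree $b$ can only sit in one of the three positions $b,b+1,b+2$. Two consequences will be used repeatedly. First, the vertex in position $1$ has indegree at most $1$, so \emph{every matching tournament contains a vertex of indegree $\le 1$} (dually, one of outdegree $\le 1$). Second, sorting the indegrees as $d_1 \le \cdots \le d_n$, a matching tournament must satisfy $m-2 \le d_m \le m$ for every $m$; in particular at most three vertices share any indegree. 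Any tournament violating one of these is non-matching, which makes the ``non-matching'' half of each case routine.

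The first thing to settle is that the family cannot come from a purely degree-theoretic obstruction. The cleanest such obstruction is to have minimum indegree $\ge 2$ (killing the indegree-$\le 1$ vertex); the smallest example is $T_5$, all of whose indegrees equal $2$, while each $T_5-v$ has indegree multiset $\{1,1,2,2\}$ and a one-backedge matching ordering, so by vertex-transitivity $T_5$ is minimal non-matching. However, if a minimal non-matching $G$ has minimum indegree $\ge 2$, then minimality forces every vertex to beat a vertex of indegree exactly $2$ (else some $G-v$ retains minimum indegree $\ge 2$ and stays non-matching); writing $A$ for the set of indegree-$2$ vertices, this gives $n \le 2|A|$, while the requirement that every $G-v$ have at most three indegree-$2$ vertices forces $|A|(n-3) \le 3n$, so $n$ is bounded. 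Thus an infinite family must keep its indegree sequence inside the band $m-2 \le d_m \le m$ and be non-matching for a \emph{structural} reason.

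Accordingly the key step is to design $G_k$ so that the degree sequence pins every vertex into a window of width three---forcing the position of each vertex almost completely---while the edges make the forced backedges incompatible with being a matching. The mechanism to aim for is a single \emph{global conflict} threaded through the whole tournament: arrange a unique near-source $x$ (indegree $1$) and a unique near-sink $z$ (outdegree $1$), so that Proposition \ref{degrees} pins $x$ to the first position as the head of a backedge and $z$ to the last position as the tail of a backedge; then the in-neighbor of $x$ is forced to be a backedge-tail and the out-neighbor of $z$ a backedge-head. Between $x$ and $z$ I will place blocks of three equal indegrees, whose three positions are completely rigid, and wire the edges so that these rigid blocks propagate the two constraints toward a common interior vertex, forcing it to be simultaneously the head and the tail of a backedge---impossible in a matching ordering. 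This refined, non-degree mechanism already occurs on five vertices: there is a second minimal non-matching tournament, with indegree sequence $(1,2,2,2,3)$, that fails to be matching for exactly this reason rather than because of its degrees.

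Minimality of $G_k$ should then be read off from how the conflict is threaded. Deleting $x$ or $z$ destroys an endpoint of the forced backedge pair, while deleting any interior vertex severs the chain of implications that linked the two ends; in each case I expect to exhibit an explicit matching ordering of $G_k - v$, using the symmetry built into the construction to reduce to a handful of orbits of $v$. The main obstacle is precisely this design-and-verification problem: making the sole obstruction genuinely global, so that $G_k$ is \emph{critically} non-matching for arbitrarily large $k$, and then checking that every one-vertex deletion admits a matching ordering. Producing such irreducible obstructions of every size, rather than ones that localize (and are therefore bounded, as in the degree-theoretic case above), is where essentially all of the difficulty lies.
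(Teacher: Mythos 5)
This is a plan rather than a proof: the theorem asserts the existence of infinitely many minimal non-matching tournaments, and the entire content of any proof is an explicit construction together with its verification, which your proposal never supplies. You correctly extract the rigidity consequences of Proposition \ref{degrees} (a matching tournament has a vertex of indegree $\le 1$, at most three vertices of any given indegree, and the sorted indegrees satisfy $m-2 \le d_m \le m$), and your counting argument that a minimal non-matching tournament with minimum indegree $\ge 2$ has boundedly many vertices is a nice observation --- it correctly rules out the purely degree-theoretic route. But the positive half is only described in the conditional (``I will place blocks\ldots'', ``I expect to exhibit\ldots''), and you yourself concede that ``essentially all of the difficulty lies'' in the design-and-verification step you have not carried out. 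Moreover, the mechanism you propose is shakier than you suggest: three vertices sharing an indegree $b$ are confined to positions $b, b+1, b+2$, but \emph{which} of the three occupies which position is not determined by degrees, so the claimed ``complete rigidity'' of the interior blocks, and hence the propagation of the conflict from the near-source to the near-sink, does not follow without substantial additional edge-level argument.

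For comparison, the paper's construction does realize your ``global conflict'' idea, but the rigidity of the interior comes from a different source. For odd $n \ge 7$ it takes $Q_n$ to be $P_{n-2}$ (on $v_1,\ldots,v_{n-2}$) together with two appended vertices, with backedges $v_{n-1}v_1$ and $v_n v_{n-2}$. Proposition \ref{degrees} pins $v_n$ and $v_{n-1}$ to the last two positions of any matching ordering, so the restriction to $v_1,\ldots,v_{n-2}$ must be a matching ordering of $P_{n-2}$; by Corollary \ref{Pnmatching} there are exactly two of these ($\pi_1 P_{n-2}$ and $\pi_2 P_{n-2}$), and for odd $n$ one of them already loads a backedge onto $v_{n-2}$ and the other onto $v_1$, each of which must also carry one of the forced backedges $v_{n-1}v_1$, $v_n v_{n-2}$ --- the conflict. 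Minimality is then checked by writing down an explicit matching ordering of $Q_n - v$ for each $v$. So the rigid ``middle'' is supplied not by degree pinning but by the earlier classification of matching orderings of $P_n$, which is exactly the kind of structural input your sketch is missing. As it stands, your argument does not establish the theorem.
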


\begin{proof}
For $n \ge 3$, define the tournament $Q_n$ as follows. Let the vertices of $Q_n$ be $v_1,\ldots,v_n$, and let the backedges of the ordering $v_1,\ldots,v_n$ be exactly $v_{n-1}v_1$, $v_nv_{n-2}$, and $v_{i+1}v_i$ for $1 \le i \le n-3$. Notice that the tournament induced on $v_1,\ldots,v_{n-2}$ is $P_{n-2}$.

We will prove that for all odd $n \ge 7$, $Q_n$ is a minimal non-matching tournament, which suffices to prove the theorem. We first show that $Q_n$ is not a matching tournament. Suppose $Q_n$ has a matching ordering $u_1,\ldots,u_n$. By Proposition \ref{degrees}, $u_n$ has indegree $n-2$ or $n-1$. Thus, $u_n$ must be $v_n$. Let $Q^\prime = Q_n - v_n$; then $u_1,\ldots,u_{n-1}$ is a matching ordering of $Q^\prime$. By Proposition \ref{degrees}, $u_{n-1}$ has indegree $n-3$ or $n-2$ in $Q^\prime$; thus, $u_{n-1}$ must be $v_{n-1}$. Let $Q^{\prime\prime} = Q^\prime - v_{n-1}$, so $u_1,\ldots,u_{n-2}$ is a matching ordering of $Q^{\prime\prime}$. Now, since $Q^{\prime\prime}$ is isomorphic to $P_{n-2}$ and $n \ge 7$, we have by Corollary \ref{Pnmatching} that $u_1,\ldots,u_{n-2}$ is either $\pi_1 Q^{\prime\prime}$ or $\pi_2 Q^{\prime\prime}$. However, for odd $n$, the ordering $\pi_1 Q^{\prime\prime}$ has the backedge $v_{n-2}v_{n-3}$, and the ordering $\pi_2 Q^{\prime\prime}$ has the backedge $v_2v_1$. (See the two bulleted points in the proof of Proposition \ref{Xorderings}.)
Thus, since $u_1,\ldots,u_n$ has the backedges $v_{n-1}v_1$ and $v_nv_{n-2}$, one of $v_1$, $v_{n-2}$ is an end of two backedges of $u_1,\ldots,u_n$, contradicting the fact that $u_1,\ldots,u_n$ is a matching ordering. So $Q_n$ is not a matching tournament.

Now, let $v$ be any vertex of $Q_n$. We wish to prove that $Q_n - v$ is a matching tournament. If $v = v_n$, consider the ordering $u_1,\ldots,u_{n-2},v_{n-1}$ of $Q_n - v_n$, where $u_1,\ldots,u_{n-2}$ is the $\pi_1 P_{n-2}$ ordering of $v_1,\dots,v_{n-2}$. In the $\pi_1 P_{n-2}$ ordering of $v_1,\dots,v_{n-2}$ for odd $n$, $v_1$ is not the end of any backedge. Putting $v_{n-1}$ after $u_1,\ldots,u_{n-2}$ adds only the backedge $v_{n-2}v_1$, so $u_1,\ldots,u_{n-2},v_{n-1}$ is a matching ordering, as desired. Similarly, if $v = v_{n-1}$, consider the ordering $u_1,\ldots,u_{n-1},v_n$ of $Q_n-v_{n-1}$, where $u_1,\ldots,u_{n-2}$ is the $\pi_2 P_{n-2}$ ordering of $v_1,\dots,v_{n-2}$. In the $\pi_2 P_{n-2}$ ordering of $v_1,\dots,v_{n-2}$ for odd $n$, $v_{n-2}$ is not the end of any backedge, and adding $v_n$ afterwards adds only the backedge $v_nv_{n-2}$. So $u_1,\ldots,u_{n-1},v_n$ is a matching ordering, as desired.
 
Now suppose $v = v_m$ for $1 \le m \le n-2$. Let $H_1$ be the subtournament of $Q_n$ induced on $\{v_1,\ldots,v_{m-1}\}$ and let $H_2$ be the subtournament of $Q_n$ induced on $\{v_{m+1},\ldots,v_{n-2}\}$. If one of these two sets is empty, then in the following argument we ignore all mentions to the corresponding subtournament. Now, $H_1$ and $H_2$ are isomorphic to $P_{m-1}$ and $P_{n-m-2}$, respectively. Let $u_1,\ldots,u_{m-1}$ be the ordering $\pi_1 H_1$, and let $u_{m+1},\ldots,u_{n-2}$ be the ordering $\pi_2 H_2$ if $n-m-2$ is odd and $\pi_1 H_2$ if $n-m-2$ is even. Consider the ordering 
\[
u_1,\ldots,u_{m-1},u_{m+1},\ldots,u_{n-2},v_{n-1},v_{n}
\]
of $Q_n - v_m$. The ordering $u_1,\ldots,u_{m-1},u_{m+1},\ldots,u_{n-2}$ is a matching ordering of $Q_n - \{v_m,v_{n-1},v_{n}\}$, and furthermore, we can check that each of $v_1,v_{n-2}$ is either absent from this ordering or is not the end of a backedge of this ordering. Adding $v_{n-1},v_n$ to the end of this ordering adds at most the backedges $v_{n-1}v_1$ and $v_{n-2}v_n$, so $u_1,\allowbreak\ldots,\allowbreak u_{m-1},\allowbreak u_{m+1},\allowbreak \ldots,\allowbreak u_{n-2},\allowbreak v_{n-1},\allowbreak v_{n}$ is a matching ordering of $Q_n - v_m$. This completes the proof.    
 
\end{proof}

\section{Excluding almost-transitive subtournaments}

\subsection{Almost-transitive tournaments}

The set of tournaments which do not have $I_n$ as a subtournament is finite; indeed, it is well-known that any tournament with $m$ vertices has a transitive subtournament with $\ge \log_2 m$ vertices. We can then ask what happens when we exclude subtournaments that are \emph{almost} transitive tournaments; that is, tournaments obtained from transitive tournaments by reversing a small number of edges. This final section deals with the structure of tournaments that exclude such almost-transitive tournaments.

Before proceeding, we introduce some final conventions. Given a tournament $G$ with an ordering $v_1,\ldots,v_n$ of its vertices, the \emph{length} of a backedge $v_jv_i$ is $j-i$. If $G$ is a tournament and $u_1,\ldots,u_m$ is an ordering of a subset of $V(G)$, we call $u_1,\ldots,u_m$ a \emph{subordering} of $G$; in addition, we will often use $u_1,\ldots,u_m$ to mean the subtournament of $G$ induced on $\{u_1,\ldots,u_m\}$. Finally, if $X$ is a set and $X^\circ$ is an ordering $x_1,\ldots,x_n$ of $X$, and $y_1,y_2$ are elements not in $X$, we write $y_1,X^\circ,y_2$ to denote the ordering $y_1,x_1,\ldots,x_n,y_2$.

Now, let $n \ge 3$. Let $J_n$ be the tournament with vertices $v_1,\ldots,v_n$ such that the ordering $v_1,\ldots,v_n$ has only one backedge $v_nv_1$. Let $K_n$ be the tournament with vertices $v_1,\ldots,v_n$ such that this ordering has only one backedge $v_nv_2$, and let $K_n^{\ast}$ be the tournament with vertices $v_1,\ldots,v_n$ such that this ordering has only one backedge $v_{n-1}v_1$. Note that $J_n$, $K_n$, and $K_n^\ast$ have subtournaments isomorphic to $J_m$, $K_m$, and $K_m^\ast$, respectively, for $n \ge m$.

We also have the following alternate definitions: Let $C_3$ be the cyclic triangle, let $D_4$ to be the tournament with vertices ordered as $v_1,v_2,v_3,v_4$ such that $v_1 \Ra \{v_2,v_3,v_4\}$ and $\{v_2,v_3,v_4\}$ forms a cyclic triangle, and let $D_4^\ast$ be the tournament with vertices ordered as $v_1,v_2,v_3,v_4$ such that $\{v_2,v_3,v_4\} \Ra v_1$ and $\{v_2,v_3,v_4\}$ forms a cyclic triangle.
Then $J_n$ is $C_3(I_1, I_1, I_{n-2})$, $K_n$ is $D_4(I_1,I_1,I_{n-3})$, and $K_n^\ast$ is $D_4^\ast(I_1,I_1,I_{n-3})$.

In the first subsection, we state and prove a theorem by P. Seymour on the structure of tournaments that exclude $J_n$ subtournaments. In the second subsection, we prove our main result, which is a structure theorem for tournaments that exclude both $K_n$ and $K_n^\ast$.

\subsection{Excluding $J_n$}

Seymour's theorem is as follows.

\begin{thm}[Seymour] \label{noJn}
The following hold.
\renewcommand{\labelenumi}{(\roman{enumi})}
\begin{enumerate}
\item For each $n \ge 3$, there exists $k$ such that if a tournament $G$ has no subtournament isomorphic to $J_n$, then $V(G)$ has an ordering $v_1,\ldots,v_{|V(G)|}$ that has no backedge of length $>k$. 
\item For each $k \ge 1$, there exists $n$ such that if a tournament $G$ has an ordering $v_1,\ldots,v_{|V(G)|}$ of its vertices with no backedge of length $>k$, then $G$ has no subtournament isomorphic to $J_n$.  
\end{enumerate}
\end{thm}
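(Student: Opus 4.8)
For any tournament and any directed edge $z \ra a$, call $A_G(a) \cap B_G(z)$ the \emph{gap} of that edge: it is exactly the set of vertices $m$ with $a \ra m \ra z$, so each such $m$ together with the edge $z \ra a$ forms a cyclic triangle $a \ra m \ra z \ra a$. Unwinding the description $J_n = C_3(I_1,I_{n-2},I_1)$, a copy of $J_n$ in $G$ is precisely an edge $z \ra a$ together with a transitive subtournament on $n-2$ vertices contained in its gap. Hence $G$ has no $J_n$ if and only if no gap contains a transitive subtournament on $n-2$ vertices, and by the logarithmic bound on transitive subtournaments quoted in the section introduction this forces $|A_G(a) \cap B_G(z)| < 2^{n-2}$ for every edge $z \ra a$. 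I would isolate this equivalence as the first lemma, since both directions of the theorem rest on it.

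For part (ii) the plan is a windowing estimate. Suppose $G$ has an ordering with no backedge of length $>k$, and suppose for contradiction that $G$ contains a copy of $J_n$ given by an edge $z \ra a$ and a transitive middle $M$ with $a \Ra M \Ra z$ and $|M| = n-2$. Writing $p(\cdot)$ for position in the ordering, the edge $a \ra m$ gives $p(m) \ge p(a)-k$ (if $m$ precedes $a$ this edge is a backedge of length $\le k$, and otherwise the inequality is trivial), while $m \ra z$ gives $p(m) \le p(z)+k$; thus every vertex of $M$ lies in the window $[\,p(a)-k,\ p(z)+k\,]$. If $a$ precedes $z$ the returning edge $z \ra a$ is itself a backedge, so $p(z)-p(a) \le k$ and the window holds at most $3k+1$ positions, two of which are $a$ and $z$; if $z$ precedes $a$ the window is even shorter. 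Either way $|M|$ is bounded by a linear function of $k$, so no copy of $J_n$ can exist once $n$ exceeds that bound, and $n = 3k+2$ suffices.

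For part (i) the plan is to produce, from $J_n$-freeness, an ordering whose backedges are short. First I would reduce to the strongly connected case: writing $G = I_s(S_1,\ldots,S_s)$ along its transitively ordered strong components and concatenating orderings of the $S_i$ in this order, every edge between distinct components points forward, so the backedge length of $G$ equals the maximum backedge length among the $S_i$, each of which is again $J_n$-free. For a strongly connected $J_n$-free $G$ I would fix a \emph{median order} $v_1,\ldots,v_N$ (one maximizing the number of forward edges) and use its standard local property: for all $i<j$, $v_i$ dominates at least half of $\{v_{i+1},\ldots,v_j\}$ and $v_j$ is dominated by at least half of $\{v_i,\ldots,v_{j-1}\}$. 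The target is to show that if some backedge $v_q \ra v_p$ has length $L = q-p$, then its gap contains a transitive subtournament whose size grows with $L$; by the first lemma this bounds $L$, and hence the backedge length of the median order, by a function of $n$, choosing $k$ accordingly.

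The hard part is exactly this last implication, and I expect it to be the genuine obstacle. The naive median estimate is far too weak: partitioning the $L-1$ internal vertices of the interval $[v_p,v_q]$ by their relation to the endpoints $a=v_p$ and $z=v_q$, the two ``at least half'' sets supplied by the median property can overlap in as few as one vertex, certifying only that the gap $A_G(a)\cap B_G(z)$ is nonempty. Worse, the large transitive subtournaments the interval does contain are typically \emph{mis-oriented}: the vertices beaten by both endpoints, and those beating both endpoints, are plentiful and transitive but lie outside the correctly oriented gap, so they do not form the cyclic middle that $J_n$ demands. To force a large transitive set inside the gap itself I anticipate needing a recursive treatment of the interval---either locating within it a strictly shorter sub-interval that still carries a long backedge and inducting, or an exchange argument that uses the assumed smallness of the gap to reorder the interval and strictly shorten the longest backedge, contradicting extremality of the chosen ordering. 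Getting one of these recursions to terminate with a usable quantitative bound is where the real work lies; the resulting $k$ will be large (exponential or tower-type in $n$), which is harmless since the theorem only asserts that some $k$ exists.
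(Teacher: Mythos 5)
Your reduction of $J_n$-freeness to the statement that every ``gap'' $A_G(a)\cap B_G(z)$ of an edge $z\ra a$ contains no transitive subtournament on $n-2$ vertices (hence has fewer than $2^{n-2}$ vertices) is correct, and your part (ii) is a complete and correct argument: it is the same position-counting idea as the paper's proof of (ii), organized around the window $[\,p(a)-k,\,p(z)+k\,]$, and it even yields a sharper constant ($n=3k+2$ versus the paper's $n=10k$). The reduction of part (i) to the strongly connected case is also sound.

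However, part (i) is not proved. The entire content of (i) is the claim that a $J_n$-free tournament admits an ordering with short backedges, and your plan reduces this to the assertion that a backedge of length $L$ in a median order forces a transitive subtournament of size growing with $L$ inside its gap. You then correctly compute that the median property only certifies that the gap meets the interval in at least one vertex, observe that the large transitive sets in the interval are typically mis-oriented, and state that a recursion or exchange argument is needed ``where the real work lies'' --- without supplying it. That unproven implication is exactly the theorem, so this is a genuine gap, not a detail. It is also unclear that the median-order route can be pushed through: nothing in your sketch uses $J_n$-freeness to control the sets $S_{+-}$ and $S_{-+}$ of mis-oriented interval vertices, and that is where the difficulty sits.

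The paper avoids the median order entirely. It anchors on a transitive subtournament $I=\{u_1,\ldots,u_{|V(I)|}\}$ of maximum size and, for each $v\notin V(I)$, studies $A=A_G(v)\cap V(I)$ and $B=B_G(v)\cap V(I)$. Maximality of $I$ forces both $A$ and $B$ to be nonempty and forces $a_v=\min\{i:u_i\in A\}<b_v=\max\{i:u_i\in B\}$; $J_n$-freeness (via two forbidden suborder patterns) forces $b_v-a_v<2n$, so each outside vertex attaches to $I$ in a window of width $<2n$; and maximality of $I$ again bounds the number of vertices attaching at any fixed position by $2^{2n}$, since $2n+1$ of them would contain a transitive set that could be swapped into $I$. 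Interleaving the fibers $X_a$ between consecutive $u_a$ then gives an ordering whose backedges have length at most $10n\cdot 2^{2n}$, with one final $J_n$-freeness argument ruling out long backedges between distant windows. If you want to complete your write-up, replacing the median order by this maximum-transitive-subtournament spine is the missing idea.
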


\begin{proof}[Proof of (i)]
We will prove that (i) holds with $k = 10n \cdot 2^{2n}$.
Let $G$ be a tournament that has no subtournament isomorphic to $J_n$. Let $I$ be a transitive subtournament of $G$ which maximizes $V(I)$, and let $u_1,\ldots,u_{V(I)}$ be the vertices of $I$ so that $u_i \ra u_j$ if $i < j$. Suppose $v \in V(G) \minus V(I)$. Let $A = A_G(v) \cap V(I)$ and $B = B_G(v) \cap V(I)$.
Since $G$ has no $J_n$ subtournament, there are no integers $1 \le i_1 < \cdots < i_{n-1} \le |V(I)|$ such that the subordering $u_{i_1},\ldots,u_{i_{n-1}},v$ has only one backedge $vu_{i_1}$, or such that the subordering $v,u_{i_1},\ldots,u_{i_{n-1}}$ has only one backedge $u_{i_{n-1}}v$. Thus, there are no integers $1 \le i_1 < \cdots < i_{n-1} \le |V(I)|$ such that either
\renewcommand{\labelenumi}{(\arabic{enumi})}
\begin{enumerate}
\item $u_{i_1} \in A$ and $u_{i_2},\ldots,u_{i_{n-1}} \in B$, or
\item $u_{i_1},\ldots,u_{i_{n-2}} \in A$ and $u_{i_n} \in B$.
\end{enumerate} 

Now, if either $A$ or $B$ is empty, then $I+v$ is transitive, contradicting the maximality of $I$. So both $A$ and $B$ are nonempty. Let $a_v = \min\{i : u_i \in A\}$ and $b_v = \max\{i : u_i \in B\}$. If $a_v > b_v$, then since $u_i \in B$ for all $i < a_v$ and $u_i \in A$ for all $i > b_v$, we must have $a_v - b_v = 1$. But then $u_1,\ldots,u_{b_v},v,u_{a_v},\ldots,u_{|V(I)|}$ is a transitive subtournament of $G$, contradicting the maximality of $I$. So $a_v < b_v$.

We now claim that $b_v - a_v < 2n$. To see this, let $Y = \{u_i : a_v < i < b_v\}$. Since $a_v \in A$, by fact (1) from above we have that $|Y \cap B| < n-1$. Similarly, since $b_v \in B$, by (2) we have that $|Y \cap A| < n-1$. Hence, $|Y| < 2n-2$, so $b_v - a_v < 2n-1 < 2n$, as claimed.

Thus, to each vertex $v \in V(G) \minus V(I)$, we have associated integers $a_v < b_v$ such that $u_i \ra v$ for all $i < a_v$, $v \ra u_i$ for all $i > b_v$, and $b_v - a_v < 2n$. In particular, $v \ra u_i$ for all $i \ge a_v + 2n$. For each $1 \le a \le |I(V)| - 1$, let $X_a$ denote the set of vertices $v \in V(G) \minus V(I)$ such that $a_v = a$. Then the sets $X_a$ form a partition of $V(G) \minus V(I)$, and for all $a$,
\begin{equation*} \tag{$\dagger$}
\{u_1,\ldots,u_{a-1}\} \Ra \{u_a\} \cup X_a \Ra \{u_{a+2n},\ldots,u_{|V(I)|}\}.
\end{equation*}
Now, we claim that $|X_a| \le 2^{2n}$ for all $a$. Indeed, suppose $|X_a| \ge 2^{2n} + 1$ for some $a$. Then the subtournament of $G$ induced on $X_a$ has a transitive subtournament with $\lceil \log_2(2^{2n} + 1) \rceil = 2n+1$ vertices. Let $X \subseteq X_a$ be the vertices of this transitive subtournament. By ($\dagger$), the subtournament of $G$ induced on 
\[
\{u_1,\ldots,u_{a-1}\} \cup X \cup \{u_{a+2n},\ldots,u_{|V(I)|}\}
\]
is transitive. However, since $|X| = 2n+1$, this transitive tournament has $|V(I)|+1$ vertices, contradicting the maximality of $I$. So $|X_a| \le 2^{2n}$ for all $a$, as claimed.

Now, for each $a$, let $X_a^\circ$ be an arbitrary ordering of $X_a$. (The ordering is empty if $X_a$ is empty.) We claim that the ordering
\begin{equation*} \tag{$\ast$}
u_1, X_1^\circ, u_2, X_2^\circ, u_3, X_3^\circ, \ldots, u_{|V(I)| - 1}, X_{|V(I)|-1}^\circ, u_{|V(I)|}
\end{equation*}
of $V(G)$ has no backedge of length $> 10n \cdot 2^{2n}$. This will suffice to prove (i).

Suppose $v^\prime v$ is a backedge of ($\ast$). Let $a$ be such that $v \in \{u_a\} \cup X_a$, and let $j$ be such that $v^\prime \in \{u_j\} \cup X_j$. First suppose that $j \le a+3n-1$. Then the set of vertices between $v$ and $v^\prime$ in ($\ast$) are a subset of $X_a \cup \{u_{a+1}\} \cup X_{a+1} \cup \cdots \cup \{u_j\} \cup X_j$. Since $j \le a+3n-1$, and $|X_i| \le 2^{2n}$ for all $i$, there are thus at most $3n \cdot 2^{2n} + 3n - 1$ vertices between $v$ and $v^\prime$ in ($\ast$). Thus, the length of $v^\prime v$ is at most $3n \cdot 2^{2n} + 3n < 10n \cdot 2^{2n}$, as desired. 

Now suppose $j \ge a+3n$. By ($\dagger$), we have $v \Ra \{u_{a+2n}, u_{a+2n+1}, \ldots, u_{a+3n-3}\} \Ra v^\prime$. By assumption, we have $v^\prime \ra v$. Thus, $v,\allowbreak u_{a+2n},\allowbreak \ldots,\allowbreak u_{a+3n-3},\allowbreak v^\prime$ is isomorphic to $J_n$, a contradiction. So we cannot have such a backedge $v^\prime v$. This proves (i).
\end{proof}

\begin{proof}[Proof of (ii)]
We prove that (ii) holds with $n = 10k$. Let $G$ be a tournament that has an ordering $v_1,\ldots,v_{|V(G)|}$ of $V(G)$ with no backedges of length $>k$. Suppose that $G$ has a subtournament isomorphic to $J_{10k}$. Let $u_1,\ldots,u_{10k}$ be an ordering of the vertices of this subtournament such that $u_{10k}u_1$ is the only backedge. Now, let $i,j$ be such that $u_1 = v_i$ and $u_{10k} = v_j$. First suppose $i < j$. Take any vertex $u \in \{u_2,\ldots,u_{10k-1}\}$, and let $x$ be such that $u = v_x$. We consider the number of possible values of $x$. Since $i < j$, the edge $v_jv_i = u_{10k}u_1$ is a backedge of $v_1,\ldots,v_{|V(G)|}$, and hence has length $\le k$. So if $i < x < j$ then there are $<k$ possible values of $x$. If $x < i$, then $v_iv_x = u_1u$ is a backedge of $v_1,\ldots,v_{|V(G)|}$ and hence has length $\le k$, so there $\le k$ possible values of $x$. Similarly if $x > j$, then $v_xv_j = uu_{10k}$ is a backedge of $v_1,\ldots,v_{|V(G)|}$ and hence has length $\le k$, so there are $\le k$ possible values of $x$. In total there are $\le 3k$ possible values of $x$. This holds for all $u \in \{u_2,\ldots,u_{10k-1}\}$, a contradiction since this set has $10k-2 > 3k$ vertices.

Now suppose $j < i$. Again, take a vertex $u \in \{u_2,\ldots,u_{10k-1}\}$, let $u = v_x$, and consider the number of possible values for $x$. If $x < i$, then $v_iv_x = u_1u$ is a backedge of $v_1,\ldots,v_{|V(G)|}$ and hence has length $\le k$, so there are $\le k$ possible values for $x$. If $x > i$, then $x > j$, so $v_xv_j = uu_{10k}$ is a backedge of $v_1,\ldots,v_{|V(G)|}$, and as before there are $\le k$ possible values of $x$. In total there are $\le 2k$ possible values of $x$, which as before is a contradiction. This completes the proof.
\end{proof}

\subsection{Excluding $K_n$ and $K_n^\ast$}

The main theorem presented in this section is the following.

\begin{thm} \label{noKn}
The following hold.
\renewcommand{\labelenumi}{(\roman{enumi})}
\begin{enumerate}
\item For each $n \ge 3$, there exists $k$ such that if a tournament $G$ has no subtournament isomorphic to $K_n$ or $K_n^\ast$, then $G$ can be written as $T_r(H_1,\ldots,H_r)$, where $r \ge 1$ is odd and for each $1 \le i \le r$, the tournament $H_i$ has no subtournament isomorphic to $J_k$.
\item For each $k \ge 3$, there exists $n$ such that if a tournament $G$ can be written as $T_r(H_1,\ldots,H_r)$, where $r \ge 1$ is odd and for each $1 \le i \le r$, the tournament $H_i$ has no subtournament isomorphic to $J_k$, then $G$ has no subtournament isomorphic to $K_n$ or $K_n^\ast$.
\end{enumerate}
\end{thm}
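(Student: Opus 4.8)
The plan rests on the reformulation that $K_n$ is a single vertex dominating a copy of $J_{n-1}$ and $K_n^\ast$ is a copy of $J_{n-1}$ dominating a single vertex. Thus $G$ has no subtournament $K_n$ if and only if every out-neighborhood $A_G(v)$ is $J_{n-1}$-free, and no $K_n^\ast$ if and only if every in-neighborhood $B_G(v)$ is $J_{n-1}$-free. Two elementary facts do most of the work: (a) in a transitive cascade $F_1 \Ra F_2 \Ra \cdots \Ra F_m$ of tournaments every cyclic triangle lies inside a single $F_i$; and (b) every cyclic triangle of $J_m$ uses the edge between its two singleton parts, so all cyclic triangles of a copy of $J_m$, hence all of its vertices, fall in one block of such a cascade.

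For (ii) I would show $n = k+1$ suffices. Write $G = T_r(H_1,\ldots,H_r)$ with each $H_i$ being $J_k$-free and fix $w \in H_i$. Since $T_r$ is $D_4$-free (Theorem \ref{diamond}), the out-neighborhood of $v_i$ in $T_r$ is transitive, so $A_G(w)$ is a transitive cascade with blocks $A_{H_i}(w)$ and the $H_j$ with $v_i \ra v_j$. By (a) and (b) any copy of $J_k$ inside $A_G(w)$ would lie in a single $H_\ell$, contradicting $J_k$-freeness; hence $A_G(w)$ is $J_k$-free and $G$ has no $K_{k+1}$. The dual argument on in-neighborhoods gives no $K_{k+1}^\ast$.

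For (i) I would pass to the prime decomposition $G = G'(F_1,\ldots,F_s)$ and set $k$ to be the larger of $n-1$ and $M(n)+1$, where $M(n)$ is the bound supplied below. If $G' = T_s$ with $s \ge 3$ (necessarily odd), then for each block $F_j$ the strong connectivity of $T_s$ gives some $v_i \ra v_j$, so $F_i \Ra F_j$; were $F_j$ to contain $J_{n-1}$, any $w \in F_i$ would dominate it and create $K_n$. Hence every block is $J_{n-1}$-free and $G$ has the desired form. If $G' = I_2$, i.e.\ $G = F_1 \Ra F_2$, the neighborhood criterion forces both $F_1$ and $F_2$ to be $J_{n-1}$-free, and since no cyclic triangle crosses the cut, facts (a) and (b) make $G$ itself $J_{n-1}$-free, so $G = T_1(G)$ works (the one-vertex case is trivial).

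The remaining case, and the main obstacle, is a prime quotient $G'$ with at least five vertices and $G' \ne T_r$. Here I would prove a \emph{key lemma}: for each $n$ there is $M(n)$ so that any $(K_n,K_n^\ast)$-free tournament whose prime quotient is a prime tournament on $\ge 5$ vertices other than $T_r$ has at most $M(n)$ vertices; granting it, $G$ is bounded and $G = T_1(G)$ is $J_k$-free. The lemma splits into two bounds. First, $G'$ itself is bounded: this is the prime case, and it is where the weave machinery from the proof of Theorem \ref{grow} enters—one shows that a large prime tournament all of whose neighborhoods are $J_{n-1}$-free contains a long weave, which by Corollary \ref{primeweave} must be $T_m$ or $U_m$, and that avoiding a neighborhood-internal $J_{n-1}$ (equivalently a $K_n$ or $K_n^\ast$) throughout the tournament forces it to be exactly $T_r$; the genuinely non-$T$ families $U_r$ and $W_r$ are eliminated by exhibiting a large $K_n$ or $K_n^\ast$ inside them for large $r$. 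Second, every block is bounded: since $G'$ is prime and not $T_r$ it contains both $D_4$ and $D_4^\ast$ (Theorem \ref{diamond} applied to $G'$ and to its converse), and one checks that every vertex of $G'$ lies in a cyclic triangle with a common dominator or a common dominatee, so a block of size $\ge 2^{n-3}$ holds a transitive $I_{n-3}$ that, together with such a triangle and its dominator or dominatee, yields a $J_{n-1}$ inside a single neighborhood. Making the weave-extension step in the first bound uniform in the size of the tournament is the heart of the difficulty.
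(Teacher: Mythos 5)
Your argument for part (ii) is correct and is essentially the paper's: with $n=k+1$, every out-neighborhood in $T_r(H_1,\ldots,H_r)$ is a transitive cascade of $J_k$-free pieces, and a copy of $J_k$, being strongly connected, would have to sit inside one piece; dually for in-neighborhoods.

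Part (i), however, rests on a key lemma that is false. You claim that for each $n$ there is a bound $M(n)$ on the size of any $(K_n,K_n^\ast)$-free tournament whose prime quotient has at least $5$ vertices and is not $T_r$. The tournaments $P_m$ of Section 5 refute this: $P_m$ is prime for $m\ge 5$ and is not $T_r$ (it has a vertex of out-degree $1$ and a vertex of out-degree $m-2$, while $T_r$ is regular), yet every out-neighborhood $A_{P_m}(v_i)$ is a single vertex dominating a copy of $P_{m-i-1}$, hence has an ordering with all backedges of length $1$ and so is $J_{10}$-free by Theorem \ref{noJn}(ii); the same holds for in-neighborhoods. Thus $P_m$ is $(K_{11},K_{11}^\ast)$-free for every $m$, and no bound $M(11)$ exists. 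The conceptual error is that the conclusion of (i) permits $r=1$, i.e.\ $G$ itself being $J_k$-free, and large prime non-$T_r$ tournaments can land in that outcome; the correct dichotomy is not ``$T_r$-quotient or bounded'' but ``$T_r$-quotient with $J_k$-free parts, or $G$ itself $J_k$-free.'' Even for that corrected claim your sketch gives no argument: the assertion that a large prime tournament with $J_{n-1}$-free neighborhoods contains a long weave and must therefore be $T_m$ or $U_m$ is unsupported (and again contradicted by $P_m$), and you yourself flag this step as unresolved. For comparison, the paper's proof of (i) does not use the modular decomposition at all: it fixes a maximum transitive subtournament $I$ with ends $u_1,u_{|V(I)|}$, splits the remaining vertices into those $X$ attached ``inside'' $I$ (some $u_1\rightarrow v$ or $v\rightarrow u_{|V(I)|}$) and those $N$ with $N\Rightarrow u_1$ and $u_{|V(I)|}\Rightarrow N$, shows by a degree-counting argument that $V(I)\cup X$ admits an ordering with backedges of bounded length (hence is $J$-free by Theorem \ref{noJn}(ii)), and then proves that each $w\in N$ satisfies $A_w\Rightarrow B_w$ on $V(I)\cup X$, which lets the two parts be interleaved into blocks realizing a $T_{2p+1}$ pattern. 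You would need to replace your key lemma with an argument of this kind, or at least prove directly that a $(K_n,K_n^\ast)$-free tournament with a non-$T_r$ prime quotient is itself $J_k$-free.
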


\begin{proof}[Proof of (i)]
For each $n \ge 3$, we will prove that there exists a large enough integer $m$ such that (i) holds with $k = \max(10 \cdot 2^m, 100n \cdot 2^{2n})$. Fix $m > 0$ for now; we will increase $m$ later if needed. Let $G$ be a tournament with no subtournament isomorphic to $K_n$ or $K_n^\ast$. If $|V(G)| < 2^m$, then there is certainly an ordering of $V(G)$ with no backedge of length $> 2^m$; thus, by Theorem \ref{noJn}(ii) (more specifically, the proof of Theorem \ref{noJn}(ii)), $G$ has no subtournament isomorphic to $J_{10\cdot 2^m}$. We can thus write $G$ as $T_1(G)$ where $G$ has no subtournament isomorphic to $J_{\max(10 \cdot 2^m, 100n \cdot 2^{2n})}$, as desired.

So assume $|V(G)| \ge 2^m$. Let $I$ be a transitive subtournament of $G$ which maximizes $V(I)$, and let $u_1,\ldots,u_{V(I)}$ be the vertices of $I$ so that $u_i \ra u_j$ if $i < j$. Since $|V(G)| \ge 2^m$, we have $|V(I)| \ge m$. Now, let $X \subseteq V(G) \minus V(I)$ be the set of vertices $v \in V(G) \minus V(I)$ such that either $u_1 \ra v$ or $v \ra u_{|V(I)|}$. Let $N = V(G) \minus (V(I) \cup X)$.

Our proof consists of two main steps. First, we use an argument similar to the one in the previous proof to show that there is an ordering of $V(I) \cup X$ in which all backedges are of bounded length. Afterwards, we show that $V(I) \cup X$ and $N$ can be broken up into homogeneous sets that can be ``weaved'' together to give the desired form.

\begin{proof}[Step 1]
Let $v \in X$, and let $A = A_G(v) \cap V(I)$ and $B = B_G(v) \cap V(I)$. By the definition of $X$, either $u_1 \in B$ or $u_{|V(I)|} \in A$.
Now, since $G$ does not have a $K_n$ subtournament, there are no integers $1 \le i_1 < \ldots < i_{n-1} \le |V(I)|$ such that
\begin{itemize}
\item the suborder $u_{i_1},v,u_{i_2},\ldots,u_{i_{n-1}}$ has only one backedge $u_{i_{n-1}}v$, or
\item the suborder $u_{i_1},u_{i_2},\ldots,u_{i_{n-1}},v$ has only one backedge $vu_{i_2}$. 
\end{itemize}
Also, since $G$ has no $K_n^\ast$ subtournament, there are no integers $1 \le i_1 < \ldots < i_{n-1} \le |V(I)|$ such that
\begin{itemize}
\item the suborder $v,u_{i_1},\ldots,u_{i_{n-2}},u_{i_{n-1}}$ has only one backedge $u_{i_{n-2}}v$, or
\item the suborder $u_{i_1},\ldots,u_{i_{n-2}},v,u_{i_{n-1}}$ has only one backedge $vu_{i_1}$.
\end{itemize}
In total, there are no integers $1 \le i_1 < \ldots < i_{n-1} \le |V(I)|$ such that any of the following hold.
\renewcommand{\labelenumi}{(\arabic{enumi})}
\begin{enumerate}
\item $u_{i_1} \in B$, $u_{i_2},\ldots,u_{i_{n-2}} \in A$, and $u_{i_{n-1}} \in B$.
\item $u_{i_1} \in B$, $u_{i_2} \in A$, $u_{i_3},\ldots,u_{i_{n-1}} \in B$.
\item $u_{i_1},\ldots,u_{i_{n-3}} \in A$, $u_{i_{n-2}} \in B$, $u_{i_{n-1}} \in A$.
\item $u_{i_1} \in A$, $u_{i_2},\ldots,u_{i_{n-2}} \in B$, and $u_{i_{n-1}} \in A$.
\end{enumerate}

Now, as in the proof of Theorem \ref{noJn}(i), $A$ and $B$ are both nonempty. Let $a_v = \min\{i : u_i \in A\}$ and $b_v = \max\{i : u_i \in B\}$. Again as in the previous proof, we have $a_v < b_v$. We claim that $b_v - a_v < 2n$. Let $Y = \{u_i : a_v < i < b_v\}$. Since $v \in X$, we have either $u_1 \in B$ or $u_{|V(I)|} \in A$. Suppose $u_1 \in B$. Since $u_1,u_{b_v} \in B$, by fact (1) from above we have that $|Y \cap A| < n-2$. Also, since $u_1 \in B$ and $u_{a_v} \in A$, by fact (2) we have that $|Y \cap B| < n-2$. Thus, $|Y| < 2n-4$. Similarly, if $u_{|V(I)|} \in A$, by fact (3) we have $|Y \cap A| <  n-2$, and by fact (4) we have $|Y \cap B| < n-2$, so $|Y| < 2n-4$. In either case we have $b_v - a_v < 2n-3 < 2n$, as claimed.

Thus, as in the previous proof, to each vertex $v \in X$ we have associated integers $a_v < b_v$ such that $u_i \ra v$ for all $i < a_v$, $v \ra u_i$ for all $i > b_v$, and $b_v - a_v < 2n$. For each $1 \le a \le |I(V)| - 1$, let $X_a$ denote the set of vertices $v \in X$ such that $a_v = a$. Then the sets $X_a$ form a partition of $X$, and by the argument in the previous proof, $|X_a| \le 2^{2n}$ for all $n$.

For each $a$, let $X_a^\circ$ be an arbitrary ordering of $X_a$. We claim that the ordering
\begin{equation*} \tag{$\ast$}
u_1, X_1^\circ, u_2, X_2^\circ, u_3, X_3^\circ, \ldots, u_{|V(I)| - 1}, X_{|V(I)|-1}^\circ, u_{|V(I)|}
\end{equation*}
of $V(I) \cup X$ has no backedge of length $> 10n \cdot 2^{2n}$. Suppose $v^\prime v$ is a backedge of ($\ast$). Let $a$ be such that $v \in \{u_a\} \cup X_a$, and let $j$ be such that $v^\prime \in \{u_j\} \cup X_j$. 
If $j \le a+3n-1$, then as in the previous proof the length of $v^\prime v$ is at most $10n \cdot 2^{2n}$, as desired. 
Now suppose $j \ge a+3n$. As in the previous proof, we have $v \Ra \{u_{a+2n}, u_{a+2n+1}, \ldots, u_{a+3n-4}\} \Ra v^\prime$ and $v^\prime \ra v$. Moreover, we have $u_{b_v} \Ra v \cup \{u_{a+2n}, \ldots, u_{a+3n-4}\} \cup v^\prime$.
Thus, $u_{b_v},v, u_{a+2n}, \ldots, u_{a+3n-4}, v^\prime$ is isomorphic to $K_n$, a contradiction. So we cannot have such a backedge $v^\prime v$, proving the claim and completing this step.
\end{proof}

Now, let $M = V(I) \cup X$, and let $v_1,\ldots,v_{|M|}$ be the ordering ($\ast$) of $M$. Thus, $v_1,\ldots,v_{|M|}$ has no backedges of length $>10n \cdot 2^{2n}$. So by the proof of Theorem \ref{noJn}(ii), the subtournament of $G$ induced on $M$ has no subtournament isomorphic to $J_{100n \cdot 2^{2n}}$. If $N$ is empty, then $M = V(G)$, so in this case $G$ has no subtournament isomorphic to $J_{100n \cdot 2^{2n}}$; we can then write $G$ as $T_1(G)$, where $G$ has no subtournament isomorphic to $J_{\max(10 \cdot 2^m, 100n \cdot 2^{2n})}$, as desired. If $N$ is not empty, we proceed to Step 2. 

\begin{proof}[Step 2]
Our goal is to partition $M$ into nonempty sets $M_1,M_2,\ldots,M_{p+1}$ and partition $N$ into nonempty sets $N_1,N_2,\ldots,N_p$ such that none of the subtournaments induced on $M_1,\ldots,M_{p+1},N_1,\ldots,N_p$ have a subtournament isomorphic to $J_{\max(10 \cdot 2^m, 100n \cdot 2^{2n})}$, and
$M_i \Ra M_j$ if $i < j$, $N_i \Ra N_j$ if $i<j$, $N_j \Ra M_i$ if $i \le j$, and $M_j \Ra N_i$ if $i < j$. If this is the case, then we can write $G$ as $G^\prime(M_1,N_1,M_2,N_2\ldots,M_p,N_p,M_{p+1})$, where $G^\prime$ is a tournament with vertices ordered as $t_1,\ldots,t_{2p+1}$ such that
\begin{itemize}
\item $t_i \ra t_j$ if $i < j$ and $i,j$ have the same parity.
\item $t_j \ra t_i$ if $i < j$ and $i,j$ have opposite parity.
\end{itemize}
Considering the vertices of $G^\prime$ in the order $t_{2p+1},t_{2p},\cdots,t_1$, we see that $G^\prime$ is in fact a BB weave. By Proposition \ref{weave}, $G^\prime$ is thus isomorphic to $T_{2p+1}$. So $G$ is of the desired form, which will complete the proof.

Since $v_1,\ldots,v_{|M|}$ is the ordering ($\ast$) of $M$, we have $v_1 = u_1$ and $v_{|M|} = u_{|V(I)|}$. Thus, by the definition of $N$, we have $N \Ra v_1$ and $v_{|M|} \Ra N$. It follows that the subtournament of $G$ induced on $N$ does not have a subtournament isomorphic to $J_{n-1}$; if there was such a subtournament $H$, then $H+v_1$ would be isomorphic to $K_n$, a contradiction.

Now, for each $w \in N$, let $A_w = A_G(w) \cap M$ and $B_w = B_G(w) \cap M$. Then $v_1 \in A_w$ and $v_{|M|} \in B_w$ for all $w \in N$.
We prove the following claim.

\setcounter{claim}{0}
\begin{claim} \label{strongseparation}
For all $w \in N$, $A_w \Ra B_w$.
\end{claim} 

\begin{proof}
Let $w \in N$. Since $v_1 \in A_w$ and $v_{|M|} \in B_w$, $A_w$ and $B_w$ are nonempty. Let $b = \min\{i : v_i \in B_w\}$ and $a = \max\{i : v_i \in A_w\}$. We claim that either $b > 200n^2 \cdot 2^{2n}$ or $a < |M| - 200n^2 \cdot 2^{2n}$. Suppose the contrary.
Then $a - b \ge |M| - 400 n^2 \cdot 2^{2n}$. Since $|M| \ge |V(I)| \ge m$, we can choose $m$ large enough so that $a-b > 200n^2 \cdot 2^{2n}$.

Now, let $Y = \{v_i : b < i < a\}$, so $|Y| \ge 200n^2 \cdot 2^{2n}$.
Suppose that $|Y \cap A| \ge 100n^2 \cdot 2^{2n}$. Let $b < i_1 < i_2 < \cdots < i_{100n^2 \cdot 2^{2n}} < a$ be such that $v_{i_s} \in Y \cap A$ for all $1 \le s \le 100n^2 \cdot 2^{2n}$. Consider the subordering 
\[
v_{i_{100n \cdot 2^{2n}}}, v_{i_{200n \cdot 2^{2n}}}, v_{i_{300n \cdot 2^{2n}}}, \ldots, v_{i_{100n(n-3) \cdot 2^{2n}}}.
\]
This subordering of $v_1,\ldots,v_{|M|}$ has no backedges, because $v_1,\ldots,v_{|M|}$ has no backedge of length $>10n \cdot 2^{2n}$. For the same reason, we have 
\[
v_b \Ra \{v_{i_{100n \cdot 2^{2n}}}, v_{i_{200n \cdot 2^{2n}}}, \ldots, v_{i_{100n(n-3) \cdot 2^{2n}}}\} \Ra v_{|M|}
\] 
as well as $v_b \ra v_{|M|}$. Thus, since $v_b,v_{|M|} \in B$ and $v_{i_{100n \cdot 2^{2n}}}, \ldots, v_{i_{100n(n-3) \cdot 2^{2n}}} \in A$, we have that
\[
v_b, w, v_{i_{100n \cdot 2^{2n}}}, v_{i_{200n \cdot 2^{2n}}}, \ldots v_{i_{100n(n-3) \cdot 2^{2n}}}, v_{|M|}
\]
is isomorphic to $J_n$, a contradiction. Hence, we must have $|Y \cap A| < 100n^2 \cdot 2^{2n}$. 

An analagous argument using $J_n^\ast$ shows that $|Y \cap B| < 100n^2 \cdot 2^{2n}$. This contradicts $|Y| \ge 200n^2 \cdot 2^{2n}$. Hence, we must have either $b > 200n^2 \cdot 2^{2n}$ or $a < |M| - 200n^2~\cdot~2^{2n}$, as claimed.

Now, assume that $b > 200n^2 \cdot 2^{2n}$. Then $v_i \in A$ for all $i \le 200n^2 \cdot 2^{2n}$. Suppose that we do not have $A_w \Ra B_w$, so there are two vertices $v_i \in A$ and $v_j \in B$ with $v_j \ra v_i$. Since $v_j \in B$, we have $j > 200n^2 \cdot 2^{2n}$. Then, since $v_j \ra v_i$ and $v_1,\ldots,v_{|M|}$ has no backedge of length $>10n \cdot 2^{2n}$, we have $i > 150n^2 \cdot 2^{2n}$. Thus, the subordering
\[
v_{100n \cdot 2^{2n}}, v_{200n \cdot 2^{2n}}, v_{300n \cdot 2^{2n}}, \ldots, v_{100n(n-3) \cdot 2^{2n}}, v_j, v_i
\]
has no backedge, since $v_j \ra v_i$ and $v_1,\ldots,v_{|M|}$ has no backedge of length $>10n \cdot 2^{2n}$. But $v_{100n \cdot 2^{2n}}, \ldots, v_{100n(n-3) \cdot 2^{2n}} \in A$ (since $100n(n-3)\cdot 2^{2n} < 200n^2\cdot 2^{2n}$), and $v_j \in B$, $v_i \in A$ by definition, so we have that
\[
w, v_{100n \cdot 2^{2n}}, v_{200n \cdot 2^{2n}}, v_{300n \cdot 2^{2n}}, \ldots, v_{100n(n-3) \cdot 2^{2n}}, v_j, v_i
\]
is isomorphic to $J_n^\ast$. This is a contradiction. Hence, if $b > 200n^2 \cdot 2^{2n}$, then $A_w \Ra B_w$. By an analagous argument using $J_n$, if $a < |M| - 200n^2 \cdot 2^{2n}$, then $A_w \Ra B_w$. Either way, $A_w \Ra B_w$, as desired.
\end{proof}

Now, the subtournament of $G$ induced on $M$ can be written as $I_s(S_1,\ldots,S_s)$, where $S_1,\ldots,S_m$ are the strong components of $G$. Since $A_w \Ra B_w$ for each $w \in N$, we have for each $w \in N$ that $A_w = V(S_1) \cup V(S_2) \cup \cdots \cup V(S_{s_w})$ and $B_w = V(S_{s_w+1}) \cup \cdots \cup V(S_s)$ for some $s_w$. It follows that for every $w,w^\prime \in N$, either $A_w \subseteq A_{w^\prime}$ or $A_{w^\prime} \subseteq A_w$; in other words, the $\subseteq$ relation on $\{A_w\}_{w\in N}$ gives a total order on $N$.
By defining an equivalence relation $\sim$ on $N$ so that $w \sim w^\prime$ if and only if $A_w = A_{w^\prime}$, 
we can thus partition $N$ into equivalence classes so that the total order on $N$ becomes a strict total order on the equivalence classes; in other words, we can partition $N$ into
nonempty sets $N_1,N_2,\ldots,N_p$ such that
$A_w = A_{w^\prime}$ for every $w,w^\prime \in N_i$, and $A_w \subsetneq A_{w^\prime}$ if $w \in N_i$, $w^{\prime} \in N_j$ for $i < j$. 

For each $1 \le i \le p$, define $A_i$ to equal $A_w$ for any $w \in N_i$. Thus, $A_1 \subsetneq A_2 \subsetneq \cdots \subsetneq A_p$. 
Now, let $M_1 = A_1$, let $M_i = A_i \minus A_{i-1}$ for $2 \le i \le p$, and let $M_{p+1} = M \minus A_p$. Then $M_i$ is nonempty for $2 \le i \le p$, and $M_1$, $M_{p+1}$ are nonempty because $v_1 \in M_1$ and $v_{|M|} \in M_{p+1}$ (since $v_1 \in A_w$ and $v_{|M|} \notin A_w$ for all $w \in N$). Thus, $M_1,\ldots,M_{p+1}$ is a partition of $M$.

In summary, we have a partition $M_1,\ldots,M_{p+1}$ of $M$ and a partition $N_1,\ldots,N_p$ of $N$ so that for all $i$, $N_i \Ra M_1 \cup M_2 \cup \cdots \cup M_i$ and $M_{i+1} \cup \cdots \cup M_{p+1} \Ra N_i$. 
By Claim 1, $A_i \Ra M \minus A_i$ for all $i$, from which it follows that $M_i \Ra M_j$ if $i < j$. We also have the following.

\begin{claim}
If $i < j$, then $N_i \Ra N_j$.
\end{claim}

\begin{proof}
Let $1 \le i < j \le p$, and suppose there are vertices $w_i \in N_i$, $w_j \in N_j$ such that $w_j \ra w_i$. Then we have the following relations.
\renewcommand{\labelenumi}{(\alph{enumi})}
\begin{enumerate}
\item $w_j \Ra \{w_i\} \cup M_1 \cup \cdots \cup M_j$
\item $w_i \Ra M_1 \cup \cdots \cup M_i \Ra M_{i+1} \cup \cdots \cup M_j \Ra w_i$
\item $\{w_j\} \cup M_{i+1} \cup \cdots \cup M_{p+1} \Ra w_i$
\item $w_j \Ra M_{i+1} \cup \cdots \cup M_j \Ra M_{j+1} \cup \cdots \cup M_{p+1} \Ra w_j$
\end{enumerate}

Relations (a) and (b) imply that $G$ contains the subtournament 
\[
D_4(w_j, w_i, M_1 \cup \cdots \cup M_i, M_{i+1} \cup \cdots \cup M_j)
\]
where we are abusing notation and using sets of vertices to mean the subtournaments of $G$ induced on them. (Note that the above expression is valid because all four sets used as arguments to $D_4(\cdot,\cdot,\cdot,\cdot)$ are nonempty.) It follows that if either $M_1 \cup \cdots \cup M_i$ or $M_{i+1} \cup \cdots \cup M_j$ has a transitive subtournament with $n-3$ vertices, then $G$ has a subtournament isomorphic to $D_4(I_1,I_1,I_1,I_{n-3}) = K_n$, a contradiction. Similarly, (c) and (d) imply that $G$ contains the subtournament
\[
D_4^\ast(w_i, w_j, M_{i+1} \cup \cdots \cup M_j, M_{j+1} \cup \cdots \cup M_{p+1}).
\] 
($M_{j+1} \cup \cdots \cup M_{p+1}$ is nonempty because $j \le p$.) If either $M_{i+1} \cup \cdots \cup M_j$ or $M_{j+1} \cup \cdots \cup M_{p+1}$ has a transitive subtournament with $n-3$ vertices, then $G$ has a subtournament isomorphic to $D_4^\ast(I_1,I_1,I_1,I_{n-3}) = K_n^\ast$, a contradiction.

Thus, none of the subtournaments induced on $M_1 \cup \cdots \cup M_i$, $M_{i+1} \cup \cdots \cup M_j$, and $M_{j+1} \cup \cdots \cup M_{p+1}$ has a transitive subtournament with $n-3$ vertices. But these three sets form a partition of $M$, and since $|M| \ge m$, for large enough $m$ at least one of these sets has size $\ge 2^n$. The subtournament induced on this set has a transitive subtournament with $n-3$ vertices, a contradiction. So no such $w_i,w_j$ exist, and hence $N_i \Ra N_j$, as desired.
\end{proof}

Now, we have partitioned $V(G)$ into nonempty sets $M_1,\ldots,M_{p+1},N_1,\ldots,N_p$ such that $M_i \Ra M_j$ if $i < j$, $N_i \Ra N_j$ if $i<j$, $N_j \Ra M_i$ if $i \le j$, and $M_j \Ra N_i$ if $i < j$. Furthermore, as we noted earlier, the subtournament of $G$ induced on $M$ has no subtournament isomorphic to $J_{100n \cdot 2^{2n}}$, and the subtournament of $G$ induced on $N$ has no subtournament isomorphic to $J_{n-1}$. Hence, none of the subtournaments of $G$ induced on $M_1,\ldots,M_{p+1},N_1,\ldots,N_p$ have a subtournament isomorphic to $J_{\max(10 \cdot 2^m, 100n \cdot 2^{2n})}$. We have thus achieved our goal, and the proof of (i) is complete.

\end{proof}

\renewcommand{\qedsymbol}{}
\end{proof}

\begin{proof}[Proof of (ii)]
We prove that (ii) holds with $n = k+1$. Let $G$ be a tournament that can be written as $T_r(H_1,\ldots,H_r)$, where $r \ge 1$ is odd and for each $1 \le i \le r$, $H_i$ has no subtournament isomorphic to $J_k$. 
If $r = 1$, then $G = H_1$, so $G$ has no subtournament isomorphic to $J_k$; thus, $G$ certainly has no subtournament isomorphic to $K_{k+1}$ or $K_{k+1}^\ast$, since $J_k$ is a subtournament of these tournaments. 
So assume $r \ge 3$. Let $r = 2\ell+1$. If $G$ has a subtournament isomorphic to $K_{k+1}$, there must be some vertex $v$ of $G$ whose outneighborhood contains a subtournament isomorphic to $J_k$. However, the outneighborhood of every $v \in V(G)$ is of the form $I_\ell(H_i, H_{i+1}, \ldots, H_{i+\ell-1})$, where the indices of the $H$ subtournaments are taken modulo $r$. Since $J_k$ is strongly connected and each $H_j$ does not have a $J_k$ subtournament, $I_\ell(H_i, H_{i+1}, \ldots, H_{i+\ell-1})$ does not have one either. Thus $G$ does not contain $K_{k+1}$. By an analagous argument, $G$ does not have a subtournament isomorphic to $K_{k+1}^\ast$, completing the proof.
\end{proof}

\section{Further questions}

To conclude, we list a few directions that future research might take. First, it would be nice to find a good application for Theorem \ref{grow}. As noted in the Introduction, Schmerl and Trotter's Theorem \ref{critical} has often been used in the study prime tournaments. Since Theorem \ref{grow} appears to be a substantive improvement on this theorem, it may help in proving further facts about the prime tournaments. Our proof of Theorem \ref{diamond} gives an example of how the theorem might be used.

In Section 5, we looked at tournaments whose vertices can be ordered so that the backedges form a matching. One can generalize this and look at tournaments with orderings in which the backedges form different structures; for example, forests. It would be good to know bounds on the number of such orderings that a prime tournament can have, and whether there are infinitely many minimal tournaments which do not have such orderings.

Finally, there is much work to be done on looking at families of tournaments which exclude certain subtournaments. Chudnovsky and Seymour very recently proved a structure theorem for the tournaments that exclude both of the following two subtournaments: for odd $n \ge 3$, $n = 2k+1$, let $L_n$ be the tournament with vertices $v_1,\ldots,v_n$ so that this ordering has only one backedge $v_n v_{k+1}$, and let $L_n^\ast$ be the tournament with vertices $v_1,\ldots,v_n$ so that this ordering has only one backedge $v_{k+1}v_1$. Their result centers around tournaments that are obtained from $T_r(I^1,\ldots,I^r)$, where $r \ge 1$ is odd and $I^1,\ldots,I^r$ are transitive tournaments, by reversing edges which are in a sense ``extreme.'' Another, somewhat different result by Latka \cite{L} gives the structure of tournaments which exclude $W_5$; in particular, the prime tournaments which do not have $W_5$ subtournaments are precisely $I_2$, $T_n$ and $U_n$ for odd $n \ge 1$, the Paley tournament with 7 vertices, and the tournament obtained from the Paley tournament with 7 vertices by deleting a vertex.

It would be nice to have a better understanding of the tournaments which exclude $T_5$ and $U_5$ subtournaments. We also want to consider tournaments which exclude the following tournament: for $n = 3k+2$, $k \ge 1$, let $E_n$ be the tournament with vertices $v_1,\ldots,v_n$ so that this ordering has only one backedge $v_{2k+2}v_{k+1}$. Part of the difficulty of this latter problem comes from our lack of understanding of even the case $E_5$. A structural theorem for the tournaments that exclude $E_5$ would not only aid in this problem, but also be interesting in itself.

\newpage
\pagestyle{empty}

\vspace*{\fill}
This thesis represents my own work in accordance with University regulations.
\vspace{2in}
\vspace*{\fill}

\end{document}